\numberwithin{equation}{section}
\definecolor{lightgray}{gray}{0.9}
\definecolor{xgray}{gray}{0.8}
\colorlet{conjcolor}{blue!5!white}
\theoremstyle{plain}
\newtheorem{defn}[equation]{Definition}
\newtheorem{thm}[equation]{Theorem}
\newtheorem{prop}[equation]{Proposition}
\newtheorem{fact}[equation]{Fact}
\newtheorem{const}[equation]{Construction}
\def\PhysicsFact{Physics Assumption}
\newtheorem{fact?}[equation]{Fact?}
\newtheorem{lem}[equation]{Lemma}
\newtheorem{ass}[equation]{Assumption}
\newtheorem{claim}[equation]{Claim}
\theoremstyle{remark}
\newtheorem{rem}[equation]{Remark}
\newtheorem{ex}[equation]{Example}
\let\oldendrem\endrem
\def\endrem{\hfill {$\lrcorner$} \oldendrem}
\def\bC{\mathbb{C}}
\def\bZ{\mathbb{Z}}
\def\bS{\mathbb{bS}}
\def\Hom{\mathrm{Hom}}
\def\Ext{\mathrm{Ext}}
\def\K{\mathrm{KU}}
\def\KO{\mathrm{KO}}
\def\KU{\mathrm{KU}}
\def\MF{\mathrm{MF}}
\def\TMF{\mathrm{TMF}}
\def\tmf{\mathrm{tmf}}
\def\mf{\mathrm{mf}}
\def\Z{\mathbb{Z}}
\def\MString{\mathrm{MString}}
\def\MSpin{\mathrm{MSpin}}
\def\Q{\mathbb{Q}}
\def\Nequals#1{$\mathcal{N}{=}#1$}
\def\spin{\text{spin}}
\def\stri{\text{string}}
\def\Wit{\mathop{\mathrm{Wit}}\nolimits}
\def\SQFT{\mathrm{SQFT}}
\def\del{\partial}
\def\id{\mathrm{id}}
\def\fr{\mathrm{fr}}
\def\rel{\mathrm{rel}}
\def\SQM{\mathrm{SQM}}
\def\AHR{\mathrm{AHR}}
\def\ABS{\mathrm{ABS}}
\def\Ind{\mathrm{Ind}}
\def\Ch{\mathrm{Ch}}
\def\Lie{\mathrm{Lie}}
\def\Ph{\mathrm{Ph}}
\let\oldtext\text
\def\text#1{\oldtext{\upshape\mdseries #1}}
\DeclareSymbolFont{yhlargesymbols}{OMX}{yhex}{m}{n} \DeclareMathAccent{\reallywidehat}{\mathord}{yhlargesymbols}{"62}
\DeclareFontFamily{U}{mathx}{}
\DeclareFontShape{U}{mathx}{m}{n}{<-> mathx10}{}
\DeclareSymbolFont{mathx}{U}{mathx}{m}{n}
\DeclareMathAccent{\widehat}{0}{mathx}{"70}
\DeclareMathAccent{\widecheck}{0}{mathx}{"71}
\let\oldwidehat\widehat
\def\widehat#1{\oldwidehat{#1}{}}
\def\paragraph#1{

\medskip\noindent \textit{#1} --- }
\begin{document}

\title[576 periodicity]{On the $576$-fold periodicity of the spectrum SQFT: The proof of the lower bound via the Anderson duality pairing}
\author{Theo Johnson-Freyd}
\author{Mayuko Yamashita}
\date{}
\address{Department of Mathematics and Statistics, Dalhousie University, and Perimeter Institute for Theoretical Physics}
\email{theojf@pitp.ca}
\address{Department of Mathematics, Kyoto University, 
Kita-shirakawa Oiwake-cho, Sakyo-ku, Kyoto, 606-8502, Japan
}
\email{yamashita.mayuko.2n@kyoto-u.ac.jp}

\thanks{The authors thank Sanath Devalapurkar for suggesting the Toda bracket construction of manifolds appearing in the proof of the main theorem. 
The work of TJF is supported by the NSERC grant RGPIN-2021-02424 and by the Simons Foundation grant 888996.
 The work of MY is supported by Grant-in-Aid for JSPS KAKENHI Grant Number 24K00522, 20K14307
and JST CREST program JPMJCR18T6. We both furthermore acknowledge the Simons Collaboration on Global Categorical Symmetries. This project began during a visit by the second-named author to the Perimeter Institute for Theoretical Physics, whom we thank for their hospitality. Research at the Perimeter Institute is supported in part by the Government of Canada through the Department of Innovation, Science, and Economic Developmnet, and by the Province of Ontario through the Ministry of Colleges and Universities.}

\begin{abstract}
We are aimed at giving a differential geometric, and accordingly physical, explanation of the $576$-periodicity of $\TMF$. 
In this paper, we settle the problem of giving the lower bound $576$. 
We formulate the problem as follows: we assume a spectrum $\SQFT$ with some conditions,
suggest from physical considerations about the classifying spectrum for two-dimensional \Nequals{(0, 1)}-supersymmetric quantum field theories,
and show that the periodicity of $\SQFT$ is no less than $576$. The main tool for the proof is the analogue of the Anderson duality pairing introduced by the second-named author and Tachikawa. We do not rely on the Segal-Stolz-Teichner conjecture, so in particular we do not use any comparison map with $\TMF$. 
%
%
%
\end{abstract}
\maketitle

\tableofcontents


\section{Introduction}\label{sec_intro}

There are geometrically and physically important spectra which have interesting periodicities. 
The most classical examples are $\K$ and $\KO$ which are $2$- and $8$-periodic, respectively. There are many interpretations of this periodicity, including geometric and physical ones. 
Another important source of such examples are elliptic spectra. The universal version $\TMF$, the Topological Modular Forms, is exactly $576$-periodic. $\TMF$ is connected to differential geometry via the string orientation \cite{AHR10}, and also of physical importance by the Segal-Stolz-Teichner conjecture \cite{StolzTeichner1} \cite{StolzTeichner2} asserting that $\TMF$ classifies two-dimensional \Nequals{(0,1)}-supersymmetric unitary quantum field theories (SQFTs). 
However, a geometric or physical explanation for the $576$-periodicity of $\TMF$ has been lacking. 
The only existing proof of periodicity is by the spectral sequence computation of the homotopy groups of $\TMF$. 
Our goal is to give a geometric understanding of this $576$-periodicity. In this paper we settle the problem giving the lower bound $576$. Giving the upper bound $576$ is a completely independent problem, and will be the subject of a forthcoming paper.
%

Specifically, we will show that for any $E_\infty$ ring spectrum satisfying a few of the same general structural properties as $\TMF$, the periodicity (possibly infinite) must be divisible by $576$. Our reason for organizing the result this way comes from the aforementioned Segal--Stolz--Teichner conjecture. That conjecture can be broken into several sub-conjectures:
\begin{enumerate}
  \item For a suitable (but as yet unknown) definition of ``1+1D quantum field theory,'' the compact unitary 1+1D minimally-supersymmetric quantum field theories are cocycles for a certain $E_\infty$ ring spectrum $\SQFT$.\label{st1}
  \item Every 1+1D minimally-supersymmetric quantum field theory has a well-defined \emph{topological Witten genus} lifting the modular-form valued Witten genus, and assembling into an $E_\infty$ map $\SQFT \to \TMF$.\label{st2}
  \item This map $\SQFT \to \TMF$ is an isomorphism.\label{st3}
\end{enumerate}
Our motivation for this paper is the belief that at least piece (\ref{st1}) is true, whereas we are not aware of convincing  physical arguments supporting (\ref{st2}) or (\ref{st3}). 
As such, we will assume only that we have some $E_\infty$ ring spectrum $\SQFT$ satisfying the following assumptions:
   \begin{itemize}
        \item (Assumption \ref{ass_string_orientation}, see there for the notations) $\SQFT$ fits into the following commutative diagram of $E_\infty$ spectra, 
     \begin{align}\label{diag_rationalization_SQFT_intro}
        \xymatrix{
        \MString \ar[d]^-{\iota} \ar[rr]^-{\sigma_\stri} \ar@/^18pt/[rrr]^-{\Wit_{\stri}}  && \SQFT \ar[d]^-{\varphi} \ar[r]^-{\Phi} &  H\MF^\Q \ar@{_{(}-_>}[d] \\
        \MSpin \ar[rr]^-{\AHR_{\spin}} \ar@/_18pt/[rrr]_-{\Wit_{\spin}} && \KO((q)) \ar[r]^-{\mathrm{Ph}} & \Sigma^{4\Z}H\Q((q))
        }. 
    \end{align}
    \item (Assumption \ref{ass_21}) We have
    \begin{align}\label{eq_ass_21_intro}
        \pi_{-21}\SQFT = 0. 
    \end{align}
    \end{itemize}
We note that $\TMF$ does satisfy these assumptions. 

A mathematician uninterested in the Segal--Stolz--Teichner conjecture can simply take these assumptions as the starting point for an interesting puzzle: how much about $\TMF$ is already implied by these assumtions? But a physicist should ask if they are motivated. We believe that the first assumption, saying that the various versions of the Witten genus factor through through $\SQFT$, is physically unimpeachable: indeed, Witten discovered his genus by thinking about supersymmetric sigma models. The second assumption is mathematically clean but not physically motivated. In fact, we will use it only to prove a mathematically more complicated but physically more natural statement. Following \cite{Tachikawayamashita2021,tachikawa2023anderson}, we will (\S\ref{subsec_pairing}) produce a certain map $\alpha_\stri: \SQFT \to \Sigma^{-20}I_\bZ \MString$, whose physical significance is to compute the anomaly of heterotic string compactification. The first assumption on $\SQFT$ already implies that $\alpha_\stri$ vanishes rationally, and the second assumption then lifts this vanishing result to an integral statement, and it is this vanishing that we will use to give our lower bound on periodicity. As such, a physicist could decide to replace our second assumption with the physically well-motivated (but physically unproven) expectation:
\begin{itemize}
    \item The map $\alpha_\stri : \SQFT \to \Sigma^{-20}I_\bZ \MString$ encoding the heterotic string anomaly is trivial.
\end{itemize}

Under these assumptions, we will show our main theorem:
\begin{thm}[{= Theorem \ref{thm_main}}]\label{thm_intro}
    There are no invertible elements in $\pi_n \SQFT$ for $n$ not divisible by $576$. 
    In other words, the periodicity (possibly $\infty$) of the spectrum $\SQFT$ is at least $576$. 
\end{thm}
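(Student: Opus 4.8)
I would argue by contradiction. Suppose $u\in\pi_n\SQFT$ is invertible with $576\nmid n$; replacing $u$ by $u^{-1}$ if necessary, we may take $n>0$. First I would extract $24\mid n$ from the ring map $\Phi$ of Assumption~\ref{ass_string_orientation} alone, and then extract the remaining factor of $24$ from the vanishing of the heterotic anomaly map $\alpha_\stri$.

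\paragraph{Step 1: the degree is a multiple of $24$}
Since $\Phi\colon\SQFT\to H\MF^\Q$ is an $E_\infty$, hence multiplicative, map, the element $\Phi(u)$ is a homogeneous unit of $\pi_*H\MF^\Q$, the graded ring of rational weakly holomorphic level-one modular forms $\bQ[c_4,c_6,\Delta^{\pm1}]/(c_4^3-c_6^2-1728\Delta)$, in which $\Delta$ has degree $24$. As this ring is the localisation of the polynomial ring $\bQ[c_4,c_6]$ at the irreducible element $\Delta$, its homogeneous units are exactly $\bQ^\times\cdot\Delta^{\bZ}$; hence $24\mid n$. Write $n=24m$; since $576=24^2$ and $576\nmid n$ we have $24\nmid m$, and it remains to derive a contradiction from this. (Pushing $u$ through $\varphi\colon\SQFT\to\KO((q))$ and using the invertibility of the Bott class only recovers the weaker statement $8\mid n$, but it yields the compatibility $\Phi(u)=\lambda\Delta^m$ with $\lambda\in\bQ^\times$, which I will use in Step~3.)

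\paragraph{Step 2: the heterotic anomaly vanishes}
Recall from \S\ref{subsec_pairing} the map $\alpha_\stri\colon\SQFT\to\Sigma^{-20}I_\bZ\MString$, built from the String orientation $\sigma_\stri$ and the Anderson duality of $\SQFT$ as an $\MString$-module. By Assumption~\ref{ass_string_orientation} it is rationally null, and Assumption~\ref{ass_21} ($\pi_{-21}\SQFT=0$) kills the obstruction to lifting this to an honest nullhomotopy, so $\alpha_\stri\simeq0$. Equivalently, the adjoint pairing $\SQFT\wedge\MString\to\Sigma^{-20}I_\bZ$ is null, so all of the secondary characteristic pairings it induces vanish identically; by Anderson duality these include a $\bQ/\bZ$-valued pairing on the torsion of $\pi_a\SQFT\otimes\Omega^{\Stri}_b$ for $a+b=-21$.

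\paragraph{Step 3: the main obstacle}
Let $\nu$ generate $\Omega^{\Stri}_3\cong\bZ/24$, so $\sigma_\stri(\nu)\in\pi_3\SQFT$ is $24$-torsion. The key input is a Toda-bracket construction of String manifolds: from $\nu$, the cofiber $\MString/24$ of multiplication by $24$, and the powers of the unit $u$ one assembles a String manifold (equivalently an explicit $\MString$-module map) whose bordism class equals the pairing of Step~2, transported along $u$, applied to a class built from $\sigma_\stri(\nu)$ and a power of $u$. Its evaluation draws on two further inputs: Assumption~\ref{ass_21} again, to kill the $\pi_{-21}\SQFT\otimes\pi_0\MString$ contribution to $\pi_{-21}(\SQFT\wedge\MString)$ and pin down the relevant torsion summand; and the rational normalisations of $\Wit_\stri$, $\Wit_\spin$ and $\mathrm{Ph}$ in \eqref{diag_rationalization_SQFT_intro}, together with $\Phi(u)=\lambda\Delta^m$, which evaluate the invariant to $\tfrac{m}{24}\in\bQ/\bZ$ up to a unit multiple --- nonzero precisely because $24\nmid m$. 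This contradicts $\alpha_\stri\simeq0$, so $24\mid m$ and $n=24m$ is divisible by $24^2=576$. Step~1 is routine and Step~2 is immediate once $\alpha_\stri$ is constructed; the crux, and what I expect to be the main obstacle, is Step~3 --- realising the Toda bracket by an actual manifold, controlling its indeterminacy, and carrying the anomaly computation out precisely enough that the answer is \emph{exactly} $m/24$ modulo $1$ rather than an a priori unknown multiple of it, which requires tracking the $24$-torsion in $\Omega^{\Stri}_3$, Assumption~\ref{ass_21}, and the Witten-genus normalisations of \eqref{diag_rationalization_SQFT_intro} all at once.
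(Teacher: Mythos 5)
Your Steps 1 and 2 match the paper: invertible homogeneous elements of $\MF_{*/2}$ are $\pm\Delta^d$, so the degree is a multiple of $24$; and Assumption \ref{ass_21} plus the vanishing of the $q^0$-coefficient of weight $-2$ modular forms (Fact \ref{fact_MF-2}) kills the anomaly class and produces a secondary invariant. But note that the useful object is not a $\bQ/\bZ$-valued torsion pairing with $\Omega^{\Stri}_*$ in total degree $-21$: the vanishing against $\MString$ is the \emph{input}, and the secondary invariant one actually uses is the lift $\alpha_{\spin/\stri}\colon\SQFT\to\Sigma^{-20}I_\bZ\,\relbord$, whose associated $\bZ$-valued pairing is computed by $\langle\mathcal{T},[M,N]\rangle=\frac12\Delta\cdot\Phi(\mathcal{T})\cdot\Wit_\rel([M,N])|_{q^0}$. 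The \emph{relative} bordism group is essential, because only relative classes such as $[D^4,S^3_\Lie]$ have non-modular relative Witten genera (e.g.\ $E_2/12$), and it is exactly this non-modularity that produces a nontrivial integrality constraint on powers of $\Delta$.

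The genuine gap is Step 3. Your claimed evaluation ``$m/24\in\bQ/\bZ$ up to a unit multiple'' cannot stand as written --- an unknown unit multiple in $\bQ$ destroys any divisibility conclusion --- and, more importantly, the value $km/24$ is \emph{not} achievable by the obvious construction in the decisive case $m\equiv 12\pmod{24}$. Concretely, pairing $k\Delta^{-12}$ with $[D^4,S^3_\Lie]\cdot[M]$ for $[M]\in\pi_{11\times24}\MString$ gives only $k\in\bZ$, because $\gcd(11,24)=1$ forces $\Wit_\stri([M])\in 24\Delta^{11}\bZ$ (Remark \ref{rem_12}); no choice of string manifold fixes this. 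The paper gets around it by constructing a $76$-dimensional class $[\mathcal{U}^\spin,(L^\stri_{25})^{\times3}]\in\pi_{76}\relbord$ with $\Wit_\rel=E_2\Delta^3$, built from Toda brackets whose essential inputs you never invoke: the class $\bar\kappa\in\pi_{20}\bS\cong\bZ/24$, the relations $\nu\bar\kappa=0\in\pi_{23}\MString$ and $\langle24,\nu,\bar\kappa\rangle=24\Delta$ from \cite{devalapurkar2020andohopkinsrezk}, the relation $\eta^3=12\nu$, and a manifold-level comparison of the resulting bordisms (Lemmas \ref{lem_compare_bordism}--\ref{lem_rel_Wit_76}). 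Your sketch instead assembles something from ``$\nu$, the cofiber $\MString/24$ and the powers of $u$,'' which is not enough data to produce a class with a non-modular relative Witten genus of the required size. Finally, the reduction itself is not uniform in $m$: one uses $\Delta^{24}\in\mathrm{Im}\,\Phi$ to reduce everything to the two cases $\Delta^{-16}$ (coefficient divisible by $3$, handled by the easy product construction) and $\Delta^{-12}$ (coefficient divisible by $2$, requiring the Toda-bracket manifolds); your proposal collapses these into one step whose hardest instance it does not address.
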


First observe that the upper right horizontal arrow of \eqref{diag_rationalization_SQFT_intro} induces the ring homomorphism
\begin{align}\label{eq_Phi_intro}
    \Phi \colon \pi_{*}\SQFT \to \MF_{*/2} = \Z[c_4, c_6, \Delta, \Delta^{-1}]. 
\end{align}
Here we can take the codomain to be the ring of {\it integral} weakly holomorphic modular forms, since the right square of \eqref{diag_rationalization_SQFT_intro} is commutative. 
Theorem \ref{thm_main} follows from the following Proposition. 
\begin{prop}[{= Proposition \ref{prop_main}}]\label{prop_intro}
    The periodicity (possibly $\infty$) of the graded subring
    \begin{align}\label{eq_prop_intro}
        \mathrm{Im}\left( \Phi \colon \pi_{*}\SQFT \to \MF_{*/2}\right) \subset \MF_{*/2}
    \end{align}
    is at least $576$. 
\end{prop}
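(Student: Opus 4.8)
\emph{Reduction.} Recall that the homogeneous units of $\MF_{*/2}=\bZ[c_4,c_6,\Delta,\Delta^{-1}]$ are exactly the elements $\pm\Delta^{k}$, $k\in\bZ$ (the discriminant $\Delta$ is a homogeneous prime of the graded integral domain $\bZ[c_4,c_6,\Delta]$ of integral modular forms, of which $\MF_{*/2}$ is the localization at $\Delta$). Thus a unit of the graded subring $R:=\mathrm{Im}(\Phi)\subseteq\MF_{*/2}$ lying in degree $n$ has the form $\pm\Delta^{k}$ with $n=24k$, and its inverse $\pm\Delta^{-k}$ again lies in $R$, so $\pm\Delta^{mk}\in R$ for every $m\in\bZ$. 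Hence, if the periodicity of $R$ were some $d<576$, writing $d=24k_0$ we would have $24\nmid k_0$, so $8\nmid k_0$ or $3\nmid k_0$; fixing the corresponding prime $p\in\{2,3\}$, every positive integer $m$ prime to $p$ gives $p^{v_p(24)}\nmid mk_0$, and among the $K:=mk_0$ there are arbitrarily large ones, with $\pm\Delta^{-K}\in R$. It therefore suffices to derive a contradiction from the assumption that, for a fixed $p\in\{2,3\}$ and some sufficiently large $K$ with $p^{v_p(24)}\nmid K$, one has $\Delta^{-K}\in\mathrm{Im}(\Phi)$.

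\emph{From the vanishing of $\alpha_\stri$ to a period constraint.} Assume $\Delta^{-K}\in\mathrm{Im}(\Phi)$ and pick $y\in\pi_{-24K}\SQFT$ with $\Phi(y)=\Delta^{-K}$. By Assumption~\ref{ass_string_orientation} the map $\alpha_\stri\colon\SQFT\to\Sigma^{-20}I_\bZ\MString$ of \S\ref{subsec_pairing} (following \cite{Tachikawayamashita2021,tachikawa2023anderson}) is rationally trivial, and Assumption~\ref{ass_21} upgrades this to $\alpha_\stri\simeq0$. Consequently $\alpha_\stri(z)=0$ for every $z\in\pi_{*}\SQFT$; applying this to the products $z=y\cdot\sigma_\stri[N]$ for $[N]\in\pi_m\MString$ gives
\begin{equation*}
0\;=\;\alpha_\stri(z)\;\in\;\pi_{m-24K}\Sigma^{-20}I_\bZ\MString .
\end{equation*}
Via the $\MString$-module structure on $\Sigma^{-20}I_\bZ\MString$ this contains the pairing of $\alpha_\stri(z)$ against $[N]$: an integer when $m=24K-20$, and a class of $\bQ/\bZ$ when $m$ is smaller and the pertinent homotopy group of $\MString$ has torsion. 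The point of \S\ref{subsec_pairing} is that these numbers are computable \emph{periods}: fixed $\bZ$- (resp.\ $\bQ/\bZ$-) valued functionals of the weakly holomorphic modular form $\Phi(z)=\Delta^{-K}\cdot\Wit_\stri[N]\in\MF$, corrected by the $\KO((q))$-classes $\varphi(z)$ and $\AHR_{\spin}(\iota[N])$ --- and in the relevant degrees these $\KO((q))$-classes are pinned down, up to the indeterminacy \S\ref{subsec_pairing} accounts for, by the modular forms to which $\mathrm{Ph}$ sends them, the pertinent homotopy of $\KO((q))$ being torsion free. Thus $\alpha_\stri\simeq0$ forces each such period of $\Delta^{-K}\cdot\Wit_\stri[N]$ to vanish, for every closed string manifold $N$ with $\dim N\le24K-20$ in the appropriate residue class.

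\emph{The witness manifolds; the main obstacle.} It remains to exhibit, for each relevant large $K$ with $p^{v_p(24)}\nmid K$, a closed string manifold $N$ with $\dim N\le24K-20$ whose period of $\Delta^{-K}\cdot\Wit_\stri[N]$ is $p$-locally nonzero, contradicting the previous paragraph. This is the heart of the proof and the step I expect to be hardest. Following the Toda-bracket construction of string manifolds suggested to us by Devalapurkar, the plan is to take for $[N]\in\pi_*\MString$ a suitable Toda bracket of low-dimensional generators, engineered so that $\Wit_\stri[N]$ carries exactly the ``defect'' witnessing, $p$-locally, the non-realizability of $\Delta^{K}$ by the string Witten genus: at $p=3$ this is governed by a differential $d$ in the descent spectral sequence with $d(\Delta^{K})\equiv K\,\Delta^{K-1}d(\Delta)$, nonzero exactly when $3\nmid K$, and at $p=2$ by the analogous higher differentials on powers of $\Delta$ that together force $8\mid K$. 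Carrying the Toda juggling and its indeterminacy through the Witten genus to compute $\Wit_\stri[N]$, and then checking --- via the modular arithmetic of the coefficient-extraction functional --- that the resulting period of $\Delta^{-K}\cdot\Wit_\stri[N]$ is nonzero $p$-locally whenever $p^{v_p(24)}\nmid K$ (reproving, through the pairing of \S\ref{subsec_pairing}, the classical fact that $\Delta^{K}$ lies in the image of the string Witten genus exactly when $24\mid K$), is the principal obstacle: it requires the manifold construction, a grip on the relevant part of $\pi_*\MString$ via the Adams or elliptic spectral sequence, and the modular-form arithmetic, all at once. Granting this, a contradiction at each prime $p\in\{2,3\}$ shows that the periodicity of $\mathrm{Im}(\Phi)$ is at least $576$, completing the proof.
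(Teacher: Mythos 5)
Your reduction to ruling out $\Delta^{-K}\in\mathrm{Im}(\Phi)$ for suitable $K$, one prime at a time, is sound (the paper's version is tighter: using $\Delta^{24}\in\mathrm{Im}(\Phi)$, which follows from Fact~\ref{fact_Wit_image} via $\sigma_\stri$, it reduces everything to exactly two cases, $k\Delta^{-16}\Rightarrow 3\mid k$ and $k\Delta^{-12}\Rightarrow 2\mid k$). You have also correctly identified the mechanism: the integral vanishing of the anomaly map, guaranteed by Assumptions~\ref{ass_string_orientation} and~\ref{ass_21}, produces an integrality constraint that must be tested against geometric witnesses.

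However, there is a genuine gap, and it is not only that the witness construction is deferred. The witnesses you propose --- closed string manifolds $N$ paired against $y$ via $\Wit_\stri[N]$ --- cannot produce a contradiction. For $[N]\in\pi_{24K-20}\MString$ the $\bZ$-valued period is $\left.\tfrac12\Delta\cdot\Delta^{-K}\cdot\Wit_\stri[N]\right|_{q^0}$, the constant term of a weakly holomorphic modular form of the weight covered by Fact~\ref{fact_MF-2}; it vanishes identically, whether or not $\Delta^{-K}$ is in the image of $\Phi$. Indeed this automatic vanishing is precisely \emph{why} the anomaly $\alpha_{\KO((q))}$ dies on $\SQFT$ (Proposition~\ref{prop_vanish_anomaly}). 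The nontrivial information lives one step further: one must lift $\alpha_{\KO((q))}$ through the cofiber sequence to $\alpha_{\spin/\stri}\colon\SQFT\to\Sigma^{-20}I_\bZ\relbord$ and pair against \emph{relative} classes $[M,N]\in\pi_{*}\MSpin/\MString$ (spin manifolds with string boundary), whose relative Witten genera are genuinely non-modular --- e.g.\ $\Wit_\rel([D^4,S^3_\Lie])=E_2/12$ --- so that the constant term no longer vanishes for free and integrality becomes a real constraint. (Your parenthetical about $\bQ/\bZ$-valued pairings against torsion in $\pi_*\MString$ is the shadow of this via the boundary map $\pi_{*}\MSpin/\MString\to\pi_{*-1}\MString$, but you do not develop it.) The paper's witnesses are $[D^4,S^3_\Lie]\cdot[M]$ with $\Wit_\stri[M]=8\Delta^{15}$ for the prime $3$, and, for the prime $2$, a $76$-dimensional relative class built from manifold-level Toda brackets $\langle 24,[S^3_\Lie],[\bar K]\rangle$ whose relative Witten genus is $E_2\Delta^3$; producing these and computing their relative Witten genera is the actual content of the proof, and none of it is present in your proposal.
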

We note that, since the invertible elements in $\MF_{*/2}$ are precisely the ones of the form $\pm\Delta^d$ for $d \in \Z$, the periodicity of the graded ring \eqref{eq_prop_intro} should be a multiple of $24$. 


Our strategy for the proof of Proposition \ref{prop_intro} is to use the {\it Anderson duality pairing}. 
The two assumptions for $\SQFT$ above allows us to construct an interesting map of spectra
\begin{align}
        \alpha_{\spin/\stri} \colon \SQFT \to \Sigma^{-20}I_\Z \MSpin/\MString, 
\end{align}
which induces a pairing (Construction \ref{const_pairing_general})
\begin{equation}\label{eq_pairing_intro}
\langle-,-\rangle_\SQFT \colon \pi_{-4k}\SQFT \times \pi_{4k-20}\MSpin/\MString\to \bZ,
\end{equation}
with the following explicit formula, 
\begin{align}\label{eq_formula_pairing_intro}
    \langle [\mathcal{T}], [M, N] \rangle_{\alpha_{\spin/\stri}} 
        =\left.\frac{1}{2}\Delta \cdot \Phi([\mathcal{T}]) \cdot \Wit_\rel([M, N]) \right|_{q^0}
\end{align}
Here $\Wit$ is the relative Witten genus for spin manifods with string boundary. 
The crucial point is the well-definedness and the integrality of the pairing. 
As we explain below, this duality pairing allows us to show {\it non-existence} results by {\it existence} results of manifolds with specific invariants. 

Let us illustrate our method by showing that periodicity of the graded ring \eqref{eq_prop_intro} in question is strictly bigger than $24$. For that, it is enough to obstruct the existence of $\Delta^{-1}$ in \eqref{eq_prop_intro}. 
Actually, we show the following stronger claim. 
\begin{prop}\label{prop_1_intro}
    If $k \Delta^{-1}$ is contained in the image of $\Phi\colon \pi_{- 24}\SQFT \to \MF_{-12}$, 
    then we have $24|k$. 
\end{prop}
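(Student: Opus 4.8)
The plan is to feed the hypothesis into the explicit formula \eqref{eq_formula_pairing_intro} for the Anderson duality pairing and read off a congruence on $k$. Suppose $[\mathcal{T}] \in \pi_{-24}\SQFT$ satisfies $\Phi([\mathcal{T}]) = k\Delta^{-1} \in \MF_{-12}$. The pairing \eqref{eq_pairing_intro} is defined on $\pi_{-24}\SQFT \times \pi_{4}\MSpin/\MString$, so $[\mathcal{T}]$ may be paired against an arbitrary relative bordism class $[M,N] \in \pi_4\MSpin/\MString$, represented by a spin $4$-manifold $M$ with string boundary $N^3$. Since $\Delta \cdot k\Delta^{-1} = k$ as an element of $\MF_0 = \bZ$, formula \eqref{eq_formula_pairing_intro} becomes
\begin{equation*}
  \langle [\mathcal{T}], [M,N] \rangle_{\alpha_{\spin/\stri}} \;=\; \frac{k}{2}\,\Wit_\rel([M,N])\big|_{q^0},
\end{equation*}
and the decisive input --- the well-definedness and \emph{integrality} of the pairing, established in Construction \ref{const_pairing_general} --- forces the left-hand side into $\bZ$ for every $[M,N]$. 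Hence it is enough to exhibit one class with $\Wit_\rel([M,N])|_{q^0} = m/12$ for some integer $m$ coprime to $24$: then $km/24 \in \bZ$, whence $24 \mid k$.

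To pin down the possible values of $\Wit_\rel([M,N])|_{q^0}$ I would unwind the characteristic series of the Witten genus, whose expansion in Pontryagin roots begins $1 - \tfrac{E_2}{24}\,p_1 + (\text{terms of degree} \ge 8)$. For a spin $4$-manifold $M$ with string boundary $N$ this gives $\Wit_\rel([M,N]) = -\tfrac{E_2}{24}\langle p_1(M),[M,N]\rangle$, where $\langle p_1(M),[M,N]\rangle = 2\langle\lambda(M,N),[M,N]\rangle$ is twice the relative first Pontryagin number computed with the trivialization of $\lambda = p_1/2$ furnished by the string structure on $N$; reading off the constant term ($E_2 = 1 + O(q)$) yields $\Wit_\rel([M,N])|_{q^0} = -\tfrac{1}{12}\langle\lambda(M,N),[M,N]\rangle$. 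In the exact sequence $0 \to \pi_4\MSpin \to \pi_4\MSpin/\MString \to \pi_3\MString \to 0$ (using $\pi_4\MString = 0 = \pi_3\MSpin$), the subgroup $\pi_4\MSpin \cong \bZ$ is generated by a $K3$ surface with $\Wit_\rel|_{q^0} = \widehat{A}(K3) = 2$; and any two spin fillings of a fixed $N$ differ by a closed spin $4$-manifold, whose $\lambda$ lies in $24\bZ$ by Rokhlin's theorem, so $\langle\lambda(M,N),[M,N]\rangle \bmod 24$ descends to a homomorphism $\pi_3\MString \to \bZ/24$. Since $\pi_3\MString \cong \bZ/24$ and this homomorphism realizes the standard identification --- equivalently, $\Wit_\rel(-)|_{q^0}$ detects $\nu \in \pi_3^s$ --- the image of $\Wit_\rel(-)|_{q^0}$ on $\pi_4\MSpin/\MString$ is exactly $\tfrac{1}{12}\bZ$, which is all we need.

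It then remains to name an explicit generator: a string $3$-manifold $N_0$ representing a generator of the string bordism group $\Omega_3^\Stri \cong \pi_3\MString$, together with any spin coboundary $M_0$, so that $\langle\lambda(M_0,N_0),[M_0,N_0]\rangle$ is a unit modulo $24$ and hence $\Wit_\rel([M_0,N_0])|_{q^0} = -m/12$ with $\gcd(m,24) = 1$. A natural choice is $N_0 = S^3 = \SU(2)$ with its left-invariant (Lie group) framing --- the classical representative of the image of $\nu$ in $\Omega_3^\Stri$ --- filled, for instance, by $M_0 = D^4$; alternatively a Brieskorn sphere bounding a plumbing, or a representative read off from the Toda-bracket manifold construction used for the main theorem, would also serve. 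The hard part is exactly this geometric step together with the verification underlying the previous paragraph: confirming that on a manifold with boundary the relative Witten genus really is computed by the relative Pontryagin number with no additional $\eta$-type boundary correction, and that the chosen pair $(M_0,N_0)$ genuinely hits a generator of $\pi_3\MString$ so that $m$ is prime to $24$. Granting this, $24 \mid k$ drops out of the substitution into \eqref{eq_formula_pairing_intro} displayed above.
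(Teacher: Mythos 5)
Your proposal is correct and follows the paper's strategy exactly at the top level: substitute $\Phi([\mathcal{T}])=k\Delta^{-1}$ into the integral pairing $\langle-,-\rangle_{\SQFT}\colon \pi_{-24}\SQFT\times\pi_4\MSpin/\MString\to\Z$, pair against $[D^4,S^3_\Lie]$, and read off $k/24\in\Z$. The only genuine divergence is in how the key input $\Wit_\rel([D^4,S^3_\Lie])\big|_{q^0}=\pm\tfrac1{12}$ is established. You compute it from the characteristic series of the Witten genus, reducing to a relative Pontryagin number $\langle\lambda(M,N),[M,N]\rangle$ and invoking Rokhlin's theorem to see that this number is a unit mod $24$ on a generator of $\pi_3\MString\cong\Z/24$; you yourself flag as the ``hard part'' the verification that the degree-$4$ relative Witten genus is literally the relative $p_1$-number with no boundary correction. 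The paper (Example \ref{ex_D4S3} and Lemma \ref{lem_Ind_rel_D4S3}) sidesteps exactly that issue: it uses the bordism relation $[K3]=24[D^4,S^3_\Lie]$ in $\pi_4\MSpin/\MString$ together with the commutativity of \eqref{diag_Wit_rel_and_Wit} and the known value $\Wit_\spin([K3])=2E_2$, so that $\Wit_\rel([D^4,S^3_\Lie])=\tfrac{1}{24}\cdot 2E_2=\tfrac{E_2}{12}$ follows purely from functoriality, with no local index-theoretic input on manifolds with boundary. Your route is more self-contained differential-geometrically (and your Rokhlin argument correctly identifies the full image $\tfrac1{12}\Z$ of $\Wit_\rel(-)|_{q^0}$ on $\pi_4\MSpin/\MString$, which is more than is needed), but as written it leaves the boundary-correction point unproved; the paper's divisibility trick closes that gap for free.
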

\begin{proof}
    We use the pairing for $4k = 24$. 
We know that $\pi_4\MSpin/\MString \simeq \Z$, and it is generated by $[D^4, S^3_\Lie]$, the $4$-disk with boundary $S^3$ with the $SU(2)$ Lie group framing. 
We can easily verify that $\Wit_\rel([D^4, S^3_\Lie]) = \frac{1}{12} + O(q)$ (Lemma \ref{lem_Ind_rel_D4S3}). 
Thus, if we have $k\Delta^{-1} \in \mathrm{Im}(\Phi \colon \pi_{-24}\SQFT \to \MF_{-12})$, we have
\begin{align}
    \Z \ni\left. \frac{1}{2} \Delta \cdot k\Delta^{-1} \cdot \left(\frac{1}{12} + O(q)\right)\right|_{q^0} = \frac{k}{24}. 
\end{align}
Thus $k$ should be divisible by $24$. 
\end{proof}

We prove Proposition \ref{prop_intro} the case for other $d$ by the analogous ways, using the pairing for corresponding degrees. 
For that, we should produce interesting elements in $\pi_{-24d-20} \MSpin/\MString$ with specific relative Witten genera. This is another interesting point of this work. 
We do this by referring to the information on Adams-Novikov spectral sequences for $\tmf$ and related spectra, in particular of the information of the Toda brackets. 
We produce manifolds by a manifold-level construction of Toda brackets, explained in Appendix \ref{app:Toda}. 
The authors believe that this construction should be of independent interest. 

The use of the Anderson duals in this context comes from its significance in the study of anomalies in QFTs \cite{FreedHopkins2021}. 
In particular this work is motivated from the previous works of MY with Y.~Tachikawa \cite{Tachikawayamashita2021} and \cite{tachikawa2023anderson}, where they consider the corresponding Anderson duality pairing, where instead of $\SQFT$ above we used $\TMF$. 
Our key observation here is that their construction only uses the fact that the spectrum $\TMF$ satisfies the assumptions listed above. 
Therefore, in this paper we define the pairing \eqref{eq_pairing_intro} by just replacing ``$\TMF$'' in \cite{tachikawa2023anderson} to ``$\SQFT$''.

This paper is organized as follows. We begin in Section~\ref{sec:warmup} with a warm-up: using the same methods as for our main theorem, we will show that an $E_\infty$ ring spectrum ``$\SQM$'' satisfying some of the same properties as $\KO$ must have periodicity at least $8$. The choice of name ``$\SQM$'' comes from the 0+1D version of the Segal--Stolz--Teichner conjecture: there is a cocycle model for $\KO$ whose cocycles are minimally supersymmetric quantum mechanics models with $T^2 = +1$ time-reversal symmetry \cite{HohnholdStolzTeichner}. The proof of our main theorem occupies Section~\ref{sec_lowerbound}. Finally, a short Appendix~\ref{app:Toda} recalls some details about Toda brackets that will be needed in our proof.

\subsection{Notations and conventions}

\begin{itemize}
\item We basically follow the standard notations of elements in $p$-local stable homotopy groups for $p=2, 3$, which can be found in e.g., \cite{Ranavel86}. 
By abuse of notations, we use the same notations for the Hurewicz images to $E_\infty$ ring spectra. 
    \item We use the notation $\Sigma^{2\Z} H\Q := \vee_{k \in \Z} \Sigma^{2k} H\Q $. $\Sigma^{4\Z}H\Q$ and $\Sigma^{4\Z+2}H\Q$ are defined similarly. 
    \item We denote by $S^1_\Lie$ and $S^3_\Lie$ the manifold $S^1$ and $S^3$ equipped with the Lie group framing by $S^1 \simeq U(1)$ and $S^3 \simeq SU(2)$, respectively. 
    We have $[S^1_\Lie]=\eta \in \pi_1 \bS$, $[S^3_\Lie] = \nu \in \pi_3 \bS_{(2)}$ and $[S^3_\Lie] = \alpha_1 \in \pi_3 \bS_{(3)}$. 

    \item We denote the complex Bott element by $\beta \in \pi_2 \K$. When we identify $\pi_{2n} \KU \simeq \Z$, we always take the sign so that $\beta^n$ maps to $1$. 
    We identify $\pi_{8k+4}\KO \simeq 2\Z \subset \Z$ and $\pi_{8k}\KO \simeq \Z$ via $\pi_{4n}\KO \xhookrightarrow{c}\pi_{4n}\KU \simeq \Z$, where $c$ denotes the complexification. 

    \item We denote by $\Ch \colon \K \to \Sigma^{2\Z}H\Q$ and $\Ph \colon \KO \to \Sigma^{4\Z}H\Q$ the Chern character and the Pontryagin character respectively. They fit into the commutative diagram
    \begin{align}
    \xymatrix{
    \KU \ar[r] \ar[d]^-{c} & \Sigma^{2\Z}H\Q \ar@{_{(}-_>}[d] \\
    \KO \ar[r] & \Sigma^{4\Z}H\Q
    }
    \end{align}
    
    \item $\MF_* := \Z[c_4, c_6, \Delta, \Delta^{-1}]/(c_4^3-c_6^2 - 1728\Delta)$ denotes the $\Z$-graded ring of {\it weakly holomorphic} modular forms. We have $\deg c_4 = 4$, $\deg c_6 = 6$ and $\deg \Delta = 12$. 
    We have $\MF_k = 0$ for odd $k$, and
    we use the standard inclusion $\MF_k \hookrightarrow \Z((q))$ for each even $k$. This induces the inclusion
    \begin{align}\label{eq_MF_notation}
        \MF_* \hookrightarrow \oplus_{k \in \Z} \Z((q))[2k]
    \end{align}
    of $\Z$-graded rings. Here $[k]$ denotes the degree shift by $k$.  
    Let us denote $\MF_*^\Q := \MF_* \otimes\Q$.

\end{itemize}

\section{Warm-up: The $\ge 8$-periodicity of the spectrum $\SQM$} \label{sec:warmup}
This section is a warm-up for the next section, which is the main part of this paper. 
We consider the baby version of the main problem,
which is to give a geometric understanding of the lower bound $8$ of the periodicity for a class of spectra which includes $\KO$-theory. 
This section is logically independent from the rest of the paper, but we recommend the reader to start from this section because it illustrates our basic strategy in much simpler ways. 

We formulate the problem in the same way as that for $\SQFT$ outlined in the Introduction. 
In this section, we do NOT use any knowledge of $\KO$-theory. 
But we allow ourselves to use the knowledge of $\MSpin$, $\MSpin^c$ and $\K$, as well as the Atiyah-Bott-Shapiro orientation for $\K$-theory. 
We set up the problem in Subection \ref{subsec_assumptions_SQM}. 
We start from an arbitrary $E_\infty$ ring spectrum $\SQM$, and put assumptions which connects it with spin manifolds and $\K$-theory, as well as a vanishing assumption of homotopy group of a particular degree, namely $-3$. 
Then in the rest of this section we show that the periodicity of $\SQM$ is at least $8$ (Theorem \ref{thm_SQM}). 
The strategy is also parallel to the one for $\SQFT$ outlined in the Introduction.  
We construct a $\Z$-valued Anderson duality pairing (Definition \ref{def_SQMpairing}) in Subsection \ref{subsec_paring_SQM}. 
Then we use it to prove Theorem \ref{thm_SQM} in Subsection \ref{subsec_proof_SQM}.

\subsection{Assumptions for the spectrum $\SQM$}\label{subsec_assumptions_SQM}
Here we list the mathematical assumptions for the spectrum $\SQM$. 
\begin{ass}\label{ass_spin_orientation}
    $\SQM$ is an $E_\infty$ ring spectrum equipped with an $E_\infty$ ring maps
    \begin{align}
        \rho_{\spin} &\colon \MSpin \to \SQM, \\
        \psi &\colon \SQM \to \K, \\
        \Psi &\colon \SQM \to \Sigma^{4\Z} H\Q,
    \end{align}
    which fit into the following commutative diagram in the $(\infty, 1)$-category of $E_\infty$ ring spectra, 
     \begin{align}\label{diag_spin_orientation}
        \xymatrix{
        \MSpin \ar[d]^-{\iota} \ar[rr]^-{\rho_\spin} \ar@/^18pt/[rrr]^-{\Ind_{\spin}}  && \SQM \ar[d]^-{\psi} \ar[r]^-{\Psi} &  \Sigma^{4\Z}H\Q \ar@{_{(}-_>}[d] \\
        \MSpin^c \ar[rr]^-{\ABS_{\spin^c}} \ar@/_18pt/[rrr]_-{\Ind_{\spin^c}} && \K \ar[r]^-{\Ch} & \Sigma^{2\Z}H\Q
        }. 
    \end{align}
    where the left vertical arrow is the forgetful map, right vertical arrow is the canonical inclusion and $\ABS_{\spin^c} \colon \MSpin^c \to \K$ is the Atiyah-Bott-Shapiro orientation for $\K$. 
    Here, as written in the diagram, we define $\Ind_\spin := \Psi \circ \rho_\spin$ and $\Ind_{\spin^c} := \Ch \circ \ABS_{\spin^c}$. 
\end{ass}



\begin{rem}
    $\KO$ satisfies Assumption \ref{ass_spin_orientation}, by setting $\rho_\spin := \ABS_{\spin}$, the ABS orientation of spin manifolds, and $\Psi \colon = \Ph$. 
\end{rem}

\begin{rem}
    We use the notation $\Ind_{\spin}$ and $\Ind_{\spin^c}$ because they are the map extracting the $\Z$-valued Fredholm index of the Dirac operators. 
    Indeed, the morphism $\Ind_{\spin^c} := \Ch \circ \ABS_{\spin^c}$ is such a map by the Atiyah-Singer index theorem. 
    For $\Ind_{\spin}$, although we do not define it by ABS orientation for spin manifolds, the commutativity of \eqref{diag_spin_orientation} forces us to conclude that $\Ind_\spin$ coincides with the composition $\MSpin \xrightarrow{\ABS_{\spin}} \KO \xrightarrow{\Ph} \Sigma^{4\Z}H\Q$, so that it is the map extracting $\Z$-valued (Not $\Z/2$-valued) Dirac indices. 
\end{rem}

\begin{ass}\label{ass_SQM_3}
    We have
    \begin{align}
        \pi_{-3}\SQM = 0. 
    \end{align}
\end{ass}

\begin{rem}
    Of course $\KO$ satisfies Assumption \ref{ass_SQM_3}. 
\end{rem}

In this settings, we prove the following theorem. 

\begin{thm}\label{thm_SQM}
    There is no invertible elements in $\pi_n \SQM$ for $n \not\equiv 0 \pmod{8}$. 
    In other words, the periodicity (possibly $\infty$) of the spectrum $\SQM$ is at least $8$. 
\end{thm}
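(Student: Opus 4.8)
The plan is to mimic the argument already sketched for $\SQFT$, but in the much simpler $\KO$-world, where the relevant boundary bordism group $\pi_*\MSpin^c/\MSpin$ is very small. First I would promote Assumption~\ref{ass_spin_orientation} and Assumption~\ref{ass_SQM_3} into an Anderson-duality pairing: from the commutative diagram \eqref{diag_spin_orientation} one gets that the map $\MSpin^c \to \SQM$ does not directly exist, but the relative index $\Ind_{\spin^c/\spin}$ factors through $\SQM$ rationally, and $\pi_{-3}\SQM = 0$ lifts this to an integral statement, yielding a map of spectra $\alpha_{\spin^c/\spin}\colon \SQM \to \Sigma^{-2} I_\bZ(\MSpin^c/\MSpin)$. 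Dualizing gives a $\bZ$-valued pairing
\begin{equation}\label{eq_SQM_pairing_plan}
\langle -,-\rangle_{\SQM}\colon \pi_{-2k}\SQM \times \pi_{2k-2}(\MSpin^c/\MSpin) \to \bZ,
\end{equation}
with an explicit formula of the shape $\langle [\mathcal{T}], [M,N]\rangle = \bigl(\Psi([\mathcal{T}]) \cdot \Ind^c_\rel([M,N])\bigr)$, read off in the appropriate degree (the analogue of \eqref{eq_formula_pairing_intro}, with $\Delta$ replaced by the Bott class and $\Wit_\rel$ by the relative $\widehat A$-genus / index). The content is, as in the $\SQFT$ case, the well-definedness and integrality of \eqref{eq_SQM_pairing_plan}.

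Next I would reduce Theorem~\ref{thm_SQM} to a statement about the image of $\Psi\colon \pi_*\SQM \to \MF$-analogue, which here is just $\Psi\colon \pi_*\SQM \to \bZ[u^{\pm 1}]$ with $\deg u = 2$ (the image of the Pontryagin character, i.e. the subring of $\pi_*\KO \otimes \bQ$ hit by integral classes). Since the invertible elements of $\pi_*\KO$ live only in degrees divisible by $8$, and any periodicity must be a multiple of $2$ for parity reasons, it suffices to rule out invertible elements in degrees $2$, $4$, and $6$ mod $8$ — equivalently, to obstruct $u$, $u^2$, $u^3$ (and their inverses) from lying in $\mathrm{Im}(\Psi)$. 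One then runs the pairing against explicit generators of the low-degree relative bordism groups $\pi_j(\MSpin^c/\MSpin)$ for small $j$: the generator in degree $2$ is $[D^2, S^1]$ with $S^1$ the bounding (Lie-group) framing, whose relative index is a half-integer, so pairing it against a putative $u \in \mathrm{Im}(\Psi)$ in degree $-2$ produces a non-integer, a contradiction; similarly for the degrees corresponding to $u^2$ and $u^3$. This is exactly the mechanism of Proposition~\ref{prop_1_intro}: \emph{non-existence} of an invertible element follows from \emph{existence} of a manifold with a half-integral (more generally, non-integrally-divisible) relative index.

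Concretely I would organize the proof as: (i) construct $\alpha_{\spin^c/\spin}$ and the pairing \eqref{eq_SQM_pairing_plan} (Subsection~\ref{subsec_paring_SQM}); (ii) compute $\pi_j(\MSpin^c/\MSpin)$ and explicit relative-index values of chosen generators for the few values of $j$ needed; (iii) observe $\mathrm{Im}(\Psi) \subset \bZ[u^{\pm 1}]$ and note invertibles in it are $\pm u^{4d}$, so periodicity is a multiple of $8$ iff no $u^{m}$ with $m \not\equiv 0 \pmod 4$ is in the image; (iv) for each residue $m \in \{1,2,3\} \bmod 4$, pair a degree-$(-2m)$ hypothetical invertible against the relevant generator and extract a denominator, concluding divisibility fails. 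The main obstacle — as flagged by the authors themselves for the $\SQFT$ case — is step (i), the well-definedness and integrality of the pairing: one must check that the rational nullhomotopy supplied by \eqref{diag_spin_orientation} together with the vanishing $\pi_{-3}\SQM = 0$ genuinely produces an integral lift $\SQM \to \Sigma^{-2}I_\bZ(\MSpin^c/\MSpin)$, which is a long-exact-sequence chase in Anderson duality exactly paralleling \cite{Tachikawayamashita2021, tachikawa2023anderson}. Steps (ii)–(iv) are then short explicit computations of $\widehat A$-type characteristic numbers on disks with framed boundaries.
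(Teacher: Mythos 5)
Your overall strategy is the paper's: build the integral Anderson pairing $\langle-,-\rangle_\SQM \colon \pi_{-4k}\SQM \times \pi_{4k-2}(\MSpin^c/\MSpin) \to \Z$ from Assumptions \ref{ass_spin_orientation} and \ref{ass_SQM_3}, compute $\Ind_\rel([D^2,S^1_\Lie])=\tfrac12$, and exploit integrality. But three steps in your outline need repair. First, the pairing cannot see degrees $\equiv 2 \pmod 4$: the relative index $\Ind_\rel$ vanishes on $\pi_d(\MSpin^c/\MSpin)$ unless $d \equiv 2 \pmod 4$, so the pairing is only informative against $\pi_{-d}\SQM$ with $d \equiv 0 \pmod 4$, and your plan to ``obstruct $u$ and $u^3$'' by pairing against generators has nothing to pair with. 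Those degrees are instead excluded for free: an invertible $\mathcal{T}\in\pi_n\SQM$ has $\Psi(\mathcal{T})$ invertible, hence nonzero, in $\mathrm{Im}(\Psi)$, while $\pi_n\Sigma^{4\Z}H\Q=0$ for $n\not\equiv 0\pmod 4$. (Relatedly, $S^1_\Lie$ carries the \emph{non-bounding} spin structure, not the bounding one.) Second, your step ``pairing \dots\ produces a non-integer, a contradiction'' does not close as stated: pairing $\mathcal{T}\in\pi_{-4}\SQM$ with $[D^2,S^1_\Lie]$ gives $\Psi(\mathcal{T})/2\in\Z$, which only forces $\Psi(\mathcal{T})\in 2\Z$ --- no contradiction if $\Psi(\mathcal{T})$ happens to be even. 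The contradiction comes afterwards from the ring structure: if $\mathcal{T}$ were invertible in degree $\equiv 4\pmod 8$, then $\Psi(\mathcal{T})$ and $\Psi(\mathcal{T}^{-1})$ would both lie in $2\Z$ while their product is $1$, which is impossible (this is the same divisibility mechanism as Proposition \ref{prop_1_intro}, which you cite but then misstate as an outright non-integrality). Third, to cover \emph{all} $n\equiv 4\pmod 8$ rather than just $n=\pm4$, you must shift degrees by $8$; the paper does this by using an $8$-dimensional closed spin manifold $M$ with $\Ind_\spin(M)=1$, so that $1\in\mathrm{Im}(\Psi)$ in degree $8$ and the general case reduces to $d=-4$. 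With these three repairs your outline coincides with the paper's proof.
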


\subsection{Relative Dirac indices}\label{subsec_relative_Ind}
As a preparation for the proof of Theorem \ref{thm_SQM}, we discuss the consequence of Assumption \ref{ass_spin_orientation}. In particular we introduce the {\it relative Dirac indices}, which appear in the formula for the Anderson duality pairings \eqref{eq_formula_SQMpairing}. 

The commutative diagram \eqref{diag_spin_orientation} induces the following commutative diagram where the rows are homotopy fiber sequences, 
\begin{align}\label{diag_K/SQM}
\xymatrix{
\MSpin \ar[rrr]^-{\iota} \ar[d]^-{\rho_\spin}\ar@/_50pt/[ddd]_(.2){\Ind_{\spin}}  &&& \MSpin^c \ar[rrr]^-{C\iota} \ar[d]^-{\ABS_\spin^c} \ar@/_50pt/[ddd]_(.2){\Ind_\spin^c}  &&& \MSpin^c/\MSpin \ar[d]^-{\rho_\rel} \ar@/_70pt/[ddd]_(.2){\Ind_\rel} \\
\SQM \ar[rrr]^(.3){\psi} \ar[d]^-{\otimes \Q}\ar@/_30pt/[dd]_(.2){\Psi} &&& \K \ar[rrr]^(.3){C\psi} \ar[d]^-{\otimes \Q} \ar@/_30pt/[dd]_(.2){\Ch}  &&& \K/\SQM \ar[d]^-{\otimes \Q}\ar@/_40pt/[dd]_(.3){\Ch/\Psi} \\
\SQM_\Q \ar[rrr]^(.3){\psi_\Q} \ar[d]^-{\Psi_\Q} &&& \K_\Q \ar[rrr]^(.3){C\psi_\Q} \ar[d]_-{\simeq}^-{\Ch_\Q} &&& (\K/\SQM)_\Q \ar[d]^-{(\Ch/\Psi)_\Q} \\
\Sigma^{4\Z}H\Q \ar@{_{(}-_>}[rrr] &&& \Sigma^{2\Z}H\Q \ar[rrr]_-{\mathrm{proj}} &&& \Sigma^{4\Z + 2}H\Q. 
}
\end{align}
Here we choose the bottom right horizontal arrow $\Sigma^{2\Z}H\Q \to \Sigma^{4\Z+2}H\Q$ to be the projection so that the induced map on $\pi_{4\Z+2}$ becomes $\id_\Q \colon \Q \to \Q$. 
We define the morphisms $\Ind_\rel$, $\rho_\rel$ and $\Ch/\Psi$ as indicated in the diagram. 
The morphism $\Ind_\rel$ induces the map
\begin{align}\label{eq_rel_Ind}
    \Ind_\rel \colon \pi_{d}\MSpin^c/\MSpin \to \begin{cases} \Q \quad \mbox{if }d \equiv 2 \pmod 4, \\
    0 \quad \mbox{otherwise}
    \end{cases}, 
\end{align}
which we call the {\it relative Dirac index}. 
Let $d \equiv 2 \pmod 4$. By the commutativity of \eqref{diag_K/SQM}, the following diagram commutes. 
\begin{align}\label{diag_Ind_rel_and_Ind}
    \xymatrix{
    \pi_d \MSpin^c \ar[d]_-{C\iota \colon [M]\mapsto [M, \varnothing] } \ar[rrd]^-{\Ind_{\spin^c}} && \\
\pi_d \MSpin^c/\MSpin \ar[rr]_-{\Ind_\rel} && \Q. 
    }
\end{align}

\begin{ex}[{The element $[D^2, S^1_\Lie] \in \pi_{2}\MSpin^c/\MSpin$ and its relative Dirac index}]\label{ex_D2S1}
Let us consider $d=2$ case. We claim that the homotopy fiber exact sequence for bordism groups looks like
\begin{align}\label{diag_MSpinc/MSpin_deg2}
    \xymatrix{
    \pi_2 \MSpin \ar[r]  & \pi_2 \MSpin^c \ar[r] &  \pi_2\MSpin^c/\MSpin \ar[r]^-{\del} & \pi_1 \MSpin \ar[r] & \pi_1 \MSpin^c \\
    \Z/2 \ar[r]^-{0} \ar[u]^-{\simeq}_{\cdot [S^1_\Lie]^2}& \Z \ar[r]^-{\times 2} \ar[u]^-{\simeq}_{\cdot[\mathbb{CP}^1]}& \Z \ar[r]^-{\mod 2}  \ar[u]^-{\simeq}_{\cdot [D^2, {S^1}_\Lie]} & \Z/2 \ar[r] \ar[u]^-{\simeq}_{\cdot [S^1_\Lie]} & 0\ar[u]^-{\simeq}  .
    }
\end{align}
Here, $\pi_i \MSpin$ and $\pi_i \MSpin^c$ are well known, and the fact that $\pi_2 \MSpin^c/\MSpin$ fits into the nontrivial extension between $\pi_2 \MSpin^c \simeq \Z$ and $\pi_1 \MSpin \simeq \Z/2$ is checked by considering the element $[D^2, S^1_\Lie] \in \pi_2\MSpin^c/\MSpin$. 
Here we equip $D^2$ with the trivial spin$^c$-structure, and on the boundary $\del D^2 \simeq S^1$ it is lifted to the nonbounding spin structure. We have
\begin{align}\label{eq_D2S1_generates}
    \del([D^2, S^1_\Lie]) = [S^1_\Lie], \quad \mbox{and} \quad [\mathbb{CP}^1] = 2[D^2, S^1_\Lie]. 
\end{align}
which verifies the above claim. 

The relative Dirac index of the element $[D^2, S^1_{\Lie}]\in \pi_2\MSpin^c/\MSpin$ is computed as follows. 
 \begin{lem}\label{lem_Ind_rel_D2S1}
        We have 
        \begin{align}
            \Ind_{\rel}([D^2, S^1_\Lie]) = \frac{1}{2}. 
        \end{align}
    \end{lem}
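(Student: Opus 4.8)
The plan is to compute the relative Dirac index of $[D^2, S^1_\Lie]$ directly from its defining properties, exploiting the relation $[\mathbb{CP}^1] = 2[D^2, S^1_\Lie]$ established in \eqref{eq_D2S1_generates} together with the commutative triangle \eqref{diag_Ind_rel_and_Ind}. The key observation is that $[\mathbb{CP}^1]$ lies in the image of $C\iota\colon \pi_2\MSpin^c \to \pi_2\MSpin^c/\MSpin$, namely it equals $C\iota$ applied to the class of $\mathbb{CP}^1$ with its standard spin$^c$-structure; hence by \eqref{diag_Ind_rel_and_Ind} we have $\Ind_\rel([\mathbb{CP}^1]) = \Ind_{\spin^c}([\mathbb{CP}^1])$, the genuine Dirac index of $\mathbb{CP}^1$ as a spin$^c$-manifold.

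First I would recall that $\Ind_{\spin^c} = \Ch\circ\ABS_{\spin^c}$ computes, by the Atiyah--Singer index theorem, the integral $\int_{\mathbb{CP}^1} e^{c_1/2}\,\hat A(T\mathbb{CP}^1)$ for the standard spin$^c$-structure (whose characteristic line bundle is $\mathcal{O}(1)$, so $c_1$ is the generator of $H^2(\mathbb{CP}^1;\bZ)$). On a surface $\hat A$ contributes only in degree $0$, so the integrand is $e^{c_1/2}$, and $\int_{\mathbb{CP}^1} c_1/2 = 1/2$. Therefore $\Ind_{\spin^c}([\mathbb{CP}^1]) = 1/2$. Wait---I should double-check the conventions: with the holomorphic spin$^c$-structure the index of the Dirac operator equals the Todd genus $\chi(\mathbb{CP}^1,\mathcal{O}) = 1$, but the relevant twist here makes the index $\tfrac12\int c_1 = \tfrac12$; I would pin down the normalization by matching against the bottom row of \eqref{diag_K/SQM}, where the map $\pi_2\MSpin^c \to \pi_2 \K \simeq \bZ$ sends $[\mathbb{CP}^1]$ to $1$ and $\Ch$ then lands in $\Q$ via $\beta \mapsto 1$, giving the value $1$; but the relative index involves the projection to $\Sigma^{4\Z+2}H\Q$ normalized so that $\pi_{4\Z+2}$ maps by $\id_\Q$, which is where the factor enters.

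Granting $\Ind_\rel([\mathbb{CP}^1]) = \tfrac12 \cdot (\text{something})$, linearity of $\Ind_\rel$ and the relation $[\mathbb{CP}^1] = 2[D^2, S^1_\Lie]$ then give $\Ind_\rel([D^2,S^1_\Lie]) = \tfrac12 \Ind_\rel([\mathbb{CP}^1])$. The main obstacle, as the hedging above indicates, is purely a matter of nailing down the normalization constants: which integral normalization is forced by the stipulation that the bottom-right arrow $\Sigma^{2\Z}H\Q \to \Sigma^{4\Z+2}H\Q$ induce $\id_\Q$ on $\pi_{4\Z+2}$, and correspondingly what $\Ind_{\spin^c}([\mathbb{CP}^1])$ equals as an element of $\Q$ under the identification $\pi_2\K\simeq\bZ$. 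Once the conventions of \eqref{diag_K/SQM} are carefully tracked, the arithmetic is immediate and yields $\Ind_\rel([D^2,S^1_\Lie]) = \tfrac12$; alternatively, one can bypass $\mathbb{CP}^1$ entirely and compute the relative index as a boundary pairing, integrating the $\hat A$-form of $D^2$ (with its flat spin$^c$-structure) against the eta-type correction from the Lie-framed $S^1$, which should reproduce the same $\tfrac12$ and serves as a consistency check.
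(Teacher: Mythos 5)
Your overall strategy is exactly the paper's: reduce to $\mathbb{CP}^1$ via the relation $[\mathbb{CP}^1]=2[D^2,S^1_\Lie]$ from \eqref{eq_D2S1_generates}, use the commutative triangle \eqref{diag_Ind_rel_and_Ind} to identify $\Ind_\rel([\mathbb{CP}^1])$ with $\Ind_{\spin^c}([\mathbb{CP}^1])$, and divide by $2$. However, the one numerical input that the argument needs, the value of $\Ind_{\spin^c}([\mathbb{CP}^1])$, is left genuinely unresolved: you waver between $\tfrac12$ and $1$ and close with ``once the conventions are carefully tracked the arithmetic yields $\tfrac12$,'' which is not a computation. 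Since these two candidate values lead to final answers of $\tfrac14$ and $\tfrac12$ respectively, this is not a harmless normalization ambiguity but the crux of the lemma.

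The correct value is $\Ind_{\spin^c}([\mathbb{CP}^1])=1$, and your alternative computation giving $\tfrac12$ rests on a wrong identification of the characteristic line bundle: the canonical spin$^c$-structure of a complex manifold has determinant line bundle the anticanonical bundle, which for $\mathbb{CP}^1$ is $\mathcal{O}(2)$, not $\mathcal{O}(1)$; hence $\int_{\mathbb{CP}^1}e^{c_1(\det)/2}\hat A = \int_{\mathbb{CP}^1}c_1(\mathcal{O}(2))/2 = 1$, in agreement with the Todd genus $\chi(\mathbb{CP}^1,\mathcal{O})=1$. One can also see immediately that $\tfrac12$ is impossible: $\Ind_{\spin^c}$ on a closed spin$^c$-manifold is a genuine Fredholm index, hence an integer, and the paper's conventions ($\ABS_{\spin^c}$ followed by $\Ch$, with $\beta\mapsto 1$) send the generator $[\mathbb{CP}^1]$ of $\pi_2\MSpin^c\simeq\bZ$ to $1\in\bZ\subset\Q$. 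Finally, the projection $\Sigma^{2\bZ}H\Q\to\Sigma^{4\bZ+2}H\Q$ in \eqref{diag_K/SQM} is by stipulation the identity on $\pi_{4\bZ+2}$, so it contributes no factor; the entire $\tfrac12$ in the lemma comes from the divisibility $[\mathbb{CP}^1]=2[D^2,S^1_\Lie]$ in $\pi_2\MSpin^c/\MSpin$, not from the index theory.
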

    \begin{proof}
        We use the commutativity of \eqref{diag_Ind_rel_and_Ind} and the knowledge of the usual Dirac index for spin$^c$-manifolds. 
        We have
        \begin{align}
            \Ind_\rel([D^2, S^1_\Lie]) = \frac{1}{2} \cdot \Ind_{\rel}(2[D^2, S^1_\Lie]) = \frac{1}{2} \cdot \Ind_{\spin^c}([\mathbb{CP}^1]) = \frac{1}{2}.  
        \end{align}
        Here, for the second equality we used \eqref{eq_D2S1_generates}. 
    \end{proof}
\end{ex}


\subsection{A secondary pairing in the setting of $\SQM$}\label{subsec_paring_SQM}
In this subsection we explain the construction of a morphism (Definition \ref{def_beta_rel})
\begin{align}
    \gamma_{\spin^c/\spin} \colon \SQM \to \Sigma^{-2}I_\Z \MSpin^c/\MSpin
\end{align}
which induces a pairing (Construction \ref{const_pairing_general})
\begin{align}\label{eq_SQMpairing}
    \langle-,-\rangle_{\SQM} \colon \pi_{-4k}\SQM \times \pi_{4k-2}\MSpin/\MString\to \bZ. 
\end{align}
We then show in Proposition \ref{prop_formula_pairing_SQM} that the pairing has the explicit formula
\begin{align}\label{eq_formula_SQMpairing}
    \langle \mathcal{T}, [M, N]\rangle_\SQM
    =\Psi(\mathcal{T}) \cdot \Ind_\rel ([M, N]).  
\end{align}
The nontrivial point is that we get the integrality of the right hand side of \eqref{eq_formula_SQMpairing}.

\begin{defn}\label{def_gamma_K}
    We let $\gamma_{\K} \in \pi_2 I_\Z \K$ to be the Anderson duality element of K-theory shifted by the Bott periodicity.
    In other words, via the isomorphism $\pi_2 I_\Z \K \simeq \Hom(\pi_{-2}K, \Z)\simeq \Hom(\Z\cdot\beta^{-1}, \Z)$, the element $\gamma_{\K}$ corresponds to the isomorphism sending $\beta^{-1}$ to $1$. 
\end{defn}

\begin{prop}\label{prop_vanish_anomaly_SQM}
Let $I_\Z \psi \colon I_\Z \K \to I_\Z \SQM$ denote the Anderson dual to the middle vertical arrow in \eqref{diag_spin_orientation}. 
    We have the following equality in $\pi_{2}I_\Z \SQM$. 
    \begin{align}
        I_\Z \psi (\gamma_\K) = 0. 
    \end{align}
\end{prop}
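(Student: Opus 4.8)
}
The plan is to show that $I_\Z\psi(\gamma_\K)$ vanishes by exhibiting it as the image of an element of $\pi_2 I_\Z \K$ that must itself be zero for degree reasons, using the factorization of $\psi$ through the rationalization square in \eqref{diag_spin_orientation}. Concretely, $\gamma_\K \in \pi_2 I_\Z\K$ is characterized by the property that it pairs to $1$ with $\beta^{-1} \in \pi_{-2}\K$; since $\pi_{-2}\K \otimes \Q = \Q\cdot\beta^{-1}$ is all of $\pi_{-2}\K_\Q$, the element $\gamma_\K$ is actually pulled back from $\pi_2 I_\Z \K_\Q$ under the rationalization $\K \to \K_\Q$. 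Equivalently, because $\Ch \colon \K_\Q \xrightarrow{\simeq} \Sigma^{2\Z}H\Q$ is an equivalence, $\gamma_\K$ is the image under $I_\Z\Ch$ (composed with $\K \to \K_\Q$) of a generator $\gamma_{H\Q} \in \pi_2 I_\Z \Sigma^{2\Z}H\Q \cong \Hom(\Q[2],\Z)$'s relevant summand; but $\Hom(\Q,\Z) = 0$. So the strategy is: push $\gamma_\K$ up to the rational corner, where $I_\Z$ of a rational spectrum vanishes in the relevant degrees, and then use commutativity of the Anderson dual of \eqref{diag_spin_orientation} to conclude that its image in $\pi_2 I_\Z\SQM$ is zero.

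In more detail, first I would apply the contravariant functor $I_\Z$ to the middle square of \eqref{diag_spin_orientation},
\begin{align}\label{eq_middle_square_SQM_dualplan}
\xymatrix{
\SQM \ar[r]^-{\psi} \ar[d]_-{\Psi} & \K \ar[d]^-{\Ch} \\
\Sigma^{4\Z}H\Q \ar@{^{(}->}[r] & \Sigma^{2\Z}H\Q
}
\end{align}
obtaining a commuting square of Anderson duals, and observe that the bottom-left corner $I_\Z\Sigma^{4\Z}H\Q$ and the rational spectrum $\Sigma^{2\Z}H\Q$ feed into it. Next I would show $\gamma_\K$ lifts along $I_\Z\Ch \colon I_\Z \Sigma^{2\Z}H\Q \to I_\Z\K$: indeed $\Ch$ becomes an equivalence after rationalizing the source, and under the identification $\pi_2 I_\Z \Sigma^{2\Z}H\Q \cong \prod_{k} \Ext^1_\Z(\pi_{-2+2k}H\Q, \Z) \times \Hom_\Z(\pi_{2+2k}H\Q,\Z)$ every term is $\Hom(\Q,\Z)=0$ or $\Ext^1(\Q,\Z)$, so the only possibly-nonzero contribution to $\pi_2 I_\Z\K$ from the rational side is the $\Ext^1(\Q,\Z)$-piece — which is exactly the torsion ambiguity that does not affect the specific element $\gamma_\K$ (this needs a careful check: $\gamma_\K$ is by Definition \ref{def_gamma_K} the \emph{specific} dual basis vector, and one argues it is the image of the corresponding rational class). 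Then by commutativity of the dual of \eqref{eq_middle_square_SQM_dualplan}, $I_\Z\psi(\gamma_\K)$ factors through $I_\Z\Sigma^{4\Z}H\Q \to I_\Z\SQM$ applied to the pullback of $\gamma_{H\Q}$; but that pullback of $\gamma_{H\Q}$ already lives in the part of $\pi_2 I_\Z \Sigma^{4\Z}H\Q$ isomorphic to $\Hom(\Q,\Z)=0$ (degree $2 \not\equiv 0 \bmod 4$ forces the $\Hom$-term, not the $\Ext$-term), hence $I_\Z\psi(\gamma_\K)=0$.

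The main obstacle I anticipate is handling the torsion / $\Ext^1$ subtlety cleanly: Anderson duality relates $\pi_n I_\Z E$ to both $\Hom(\pi_{-n}E,\Z)$ and $\Ext^1(\pi_{-n-1}E,\Z)$ via a short exact sequence, so "push to the rational corner and use $\Hom(\Q,\Z)=0$" only kills the $\Hom$-part, and one must separately argue the $\Ext^1$-part is irrelevant for $\gamma_\K$. The natural fix is to work not with $\gamma_\K$ as an abstract class but with its defining pairing: $I_\Z\psi(\gamma_\K)$, viewed in $\pi_2 I_\Z\SQM \twoheadrightarrow \Hom(\pi_{-2}\SQM,\Z)$, sends $x \in \pi_{-2}\SQM$ to $\langle \gamma_\K, \psi(x)\rangle = (\text{coefficient of }\beta^{-1}\text{ in }\psi(x))$; one shows this coefficient is always an \emph{integer multiple of something forced to vanish}, or directly that $\psi_\Q(x) \in \K_\Q$ lies in the image of $\Psi_\Q$ composed with the inclusion $\Sigma^{4\Z}H\Q \hookrightarrow \Sigma^{2\Z}H\Q$, which has zero component in degree $-2$ (since $-2 \not\equiv 0 \bmod 4$) — so the $\beta^{-1}$-coefficient of $\psi(x)$ vanishes for every $x$, giving the $\Hom$-part is zero; the $\Ext^1$-part of $\pi_2 I_\Z \SQM$ is $\Ext^1(\pi_{-3}\SQM,\Z)$, which Assumption \ref{ass_SQM_3} sets to zero. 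Combining, $\pi_2 I_\Z\SQM$'s contribution receiving $\gamma_\K$ is entirely killed, so $I_\Z\psi(\gamma_\K)=0$. I would double-check whether the argument really needs Assumption \ref{ass_SQM_3} here or whether it is deployed only later; if the $\Ext^1$-term is handled abstractly via the rational lift, the proof is cleaner and self-contained within this subsection.
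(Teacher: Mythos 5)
Your third paragraph (``the natural fix'') is essentially the paper's proof, and it is correct: via the short exact sequence $0 \to \Ext(\pi_{-3}\SQM,\Z) \to \pi_2 I_\Z\SQM \to \Hom(\pi_{-2}\SQM,\Z) \to 0$, Assumption~\ref{ass_SQM_3} kills the $\Ext$-term, and the $\Hom$-component of $I_\Z\psi(\gamma_\K)$ is the composite $\pi_{-2}\SQM \xrightarrow{\psi} \pi_{-2}\K \xrightarrow{\gamma_\K} \Z$, which vanishes because $\Ch\circ\psi = \iota\circ\Psi$ on $\pi_{-2}$ factors through $\pi_{-2}\Sigma^{4\Z}H\Q = 0$ and $\Ch\colon\pi_{-2}\K=\Z\hookrightarrow\Q$ is injective. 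To answer your closing question: yes, Assumption~\ref{ass_SQM_3} is genuinely used at this point, and it cannot be bypassed by the rational-lift idea, as explained next.

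However, the first two paragraphs contain a real error that you partially sensed but did not resolve. You claim $\gamma_\K$ is ``pulled back from $\pi_2 I_\Z\K_\Q$ under the rationalization $\K\to\K_\Q$,'' equivalently that it lifts along $I_\Z\Ch\colon I_\Z\Sigma^{2\Z}H\Q\to I_\Z\K$. But $\pi_2 I_\Z\Sigma^{2\Z}H\Q = 0$: its $\Hom$-term is $\Hom(\pi_{-2}\Sigma^{2\Z}H\Q,\Z)=\Hom(\Q,\Z)=0$, and its $\Ext$-term is $\Ext(\pi_{-3}\Sigma^{2\Z}H\Q,\Z)=\Ext(0,\Z)=0$ (the group $\pi_{-3}$ vanishes since $-3$ is odd; the $\Ext(\Q,\Z)$-term you had in mind sits in $\pi_{-1}I_\Z H\Q$, not $\pi_2$). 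Since $\gamma_\K$ generates $\pi_2 I_\Z\K\simeq\Z$, it cannot be in the image of a map from the zero group, so the ``rational lift'' does not exist and the first plan collapses. The $\Ext$-piece you worried about is not a ``torsion ambiguity that does not affect $\gamma_\K$'' — it simply isn't there, and that is exactly why the lift fails. The correct mechanism, which your fix identifies, is not that $\gamma_\K$ comes from the rational corner but that $\psi$ itself vanishes on $\pi_{-2}$, by the degree-$(-2)$ vacuity of $\Sigma^{4\Z}H\Q$ and the commutative square of Assumption~\ref{ass_spin_orientation}.
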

\begin{proof}
Since $\pi_{-3}\SQM = 0$ by Assumption \ref{ass_SQM_3}, we have a canonical isomorphism
    \begin{align}
        \pi_{2}I_\Z \SQM \simeq \Hom(\pi_{-2}\SQM, \Z). 
    \end{align}
    The element $I_\Z \psi (\gamma_\K)$ is, regarded as a homomorphism via the above isomorphism, fits into the following commutative diagram. 
    \begin{align}
    \xymatrix{
    \pi_{-2}\SQM \ar@/^20pt/[rrr]^-{I_\Z \psi (\gamma_\K)} \ar[r]^-{\varphi} \ar[d]^-{\Psi} & \pi_{-2} \K \ar[rr]^-{\gamma_\K} \ar[d]^-{\Ph} && \Z \ar@{_{(}-_>}[d] \\
    0 \ar[r] & \Q \ar[rr]_-{\id} && \Q
    }
    \end{align}
    Here the commutativity of the left square comes from Assumption \ref{ass_spin_orientation}. Thus we get the desired result. 
\end{proof}

We now use the homotopy fiber sequence
\begin{align}
    I_\Z (\K/\SQM) \xrightarrow{I_\Z C\psi} I_\Z \K \xrightarrow{I_\Z \psi} I_\Z \SQM. 
\end{align}

\begin{defn}[{$\gamma_{\spin^c/\spin}$}]\label{def_beta_rel}
    By Proposition \ref{prop_vanish_anomaly_SQM}, we can choose a lift of $\gamma_{\K} \in \pi_{2}I_\Z \K$ to an element in $\pi_{2}I_\Z \K/\SQM$, i.e., an element $\gamma_{\K/\SQM}\in \pi_{2}I_\Z \K/\SQM$ so that 
    \begin{align}\label{eq_gamma_K/SQM}
        I_\Z C\psi(\gamma_{\K/\SQM})=\gamma_{\K}. 
    \end{align}
    Choose any lift, and abuse the notation to denote also by 
    \begin{align}
        \gamma_{\K/\SQM} \colon \SQM \to \Sigma^{-2}I_\Z \K/\SQM
    \end{align}
    the $\SQM$-module map given by the multiplication of the chosen element.
    We define $\gamma_{\spin^c/\spin}$ to be the following composition, 
    \begin{align}\label{eq_def_gamma_spinc/spin}
        \gamma_{\spin^c/\spin} \colon \SQM\xrightarrow{\gamma_{\K/\SQM}}\Sigma^{-2}I_\Z \K/\SQM \xrightarrow{I_\Z \rho_\rel} \Sigma^{-2}I_\Z \MSpin^c/\MSpin. 
    \end{align}
    where $\rho_\rel$ is given in \eqref{diag_K/SQM}. 
\end{defn}

\begin{rem}\label{rem_gamma_K/SQM}
   We can conveniently understand the definition of the element $\gamma_{\K/\SQM}\in \pi_{2}I_\Z \K/\SQM$ as follows. 
   We have the following commutative diagram where all the  rows and the middle column are exact, 
   \begin{align}
       \xymatrix{
       0 \ar[r]  & \Ext(\pi_{-3}\K/\SQM, \Z) \ar[r]\ar[d]^-{(C\psi)^*=0} &  \pi_{2}I_Z \K/\SQM \ar[d]^-{I_\Z C\psi}  \ar[r]^-{p} & \Hom(\pi_{-2}\K/\SQM, \Z) \ar[r] \ar[d]^-{(C\psi)^*}  &  0 \\
       0 \ar[r] & \Ext(\pi_{-3}\K, \Z)=0 \ar[r]^-{0} \ar[d]^-{\psi^* = 0}&  \pi_{2}I_Z \K \ar[r]^-{\simeq}\ar[d]^-{I_\Z\psi = 0} & \Hom(\pi_{-2}\K, \Z) \ar[r]\ar[d]^-{\psi^* = 0} &  0 \\
       0 \ar[r] & \Ext(\pi_{-3}\SQM, \Z)=0 \ar[r]^-{0} &  \pi_{2}I_Z \SQM  \ar[r]^-{\simeq} & \Hom(\pi_{-2}\SQM, \Z)\ar[r] &  0
       }. 
   \end{align}
   Using the isomorphism $\pi_{2}I_\Z \K \simeq \Hom(\pi_{-2}\K, \Z)$, the element $\gamma_{\K} \in \pi_2 I_Z\K$ was defined to be the element which corresponds to the isomorphism $\pi_{-2}\K \simeq \Z$. 
   Thus, for an element $\gamma_{\K/\SQM}\in \pi_{2}I_\Z \K/\SQM$, the following two conditions are equivalent. 
   \begin{enumerate}
       \item We have the equality \eqref{eq_gamma_K/SQM}. 
       \item Its image $p(\gamma_{\K/\SQM})$ in $\Hom(\pi_{-2}\K/\SQM, \Z)$ fits into the following commutative diagram. 
       \begin{align}
           \xymatrix{
           \pi_{-2}\K \ar[r]^-{C\psi} \ar[rd]^-{\simeq}_{\beta^{-1} \mapsto 1} & \pi_{-2}\K/\SQM  \ar[d]^-{p(\gamma_{\K/\SQM})} \\
           & \Z. 
           }
       \end{align}
   \end{enumerate}
   So we see that the requirement for an element $\gamma_{\K/\SQM}\in \pi_{2}I_\Z \K/\SQM$ in Definition \ref{def_beta_rel} is purely about the induced $\Z$-valued homomorphism. In particular we have the ambiguity of the choice, which is a torsor over $\Ext(\pi_{-3}\K/\SQM, \Z)$. 
   As we will see, since we are only interested in the $\Z$-valued homomorphism part in the following discussion, this ambiguity is irrelevant for us. 
\end{rem}

\begin{const}\label{const_pairing_general}
    For any map of spectra of the form $\alpha \colon E \to \Sigma^{s} I_\Z G$, we associate the $\Z$-valued pairing
\begin{align}\label{eq_pairing_general}
    \langle -, - \rangle_\alpha \colon \pi_{-d}E \times \pi_{d+s}G \to \Z
\end{align}
by the composition $\pi_{-d}E \xrightarrow{\alpha} \pi_{-d-s}I_\Z G \to \Hom(G_{d+s}, \Z) \hookrightarrow \Hom( \pi_{d+s}G, \Z) \simeq [G, \Sigma^{d+s}H\Q]$. 

On the other hand, the rationalization $\alpha_\Q \colon E_\Q \to \Sigma^{s}(I_\Z G)_\Q \simeq \Sigma^{s}F(G, H\Q)$ induces the $\Q$-valued pairing, 
\begin{align}\label{eq_pairing_general_Q}
    \langle -, -\rangle_\alpha^\Q \colon \pi_{-d}E_\Q \times \pi_{d+s}G_\Q \to \Q
\end{align}
by $\pi_{-d}E_\Q \xrightarrow{\alpha_\Q} [G, \Sigma^{d+s}H\Q] \simeq \Hom(G_{d+s}, \Q) $. 
\end{const}
By construction, we have the following compatibility between
the $\Z$-valued and $\Q$-valued pairings. 

\begin{lem}\label{lem_pairing_compatibility}
    In the settings of Construction \ref{const_pairing_general}, for each $e \in \pi_{-d} E$ and $f \in \pi_{d+s}F$ with rationalizations $e_\Q$ and $f_\Q$ respectively, we have
\begin{align}\label{eq_pairing_compatibility}
    \langle e, f \rangle_\alpha = \langle e_\Q, f_\Q\rangle_\alpha^\Q. 
\end{align}
\end{lem}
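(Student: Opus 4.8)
The plan is to unwind both pairings into an ``evaluate a homomorphism against a homotopy class'' operation and then compare the integral and rational versions step by step, using only naturality. Write $\epsilon_G\colon \pi_n I_\Z G \to \Hom(\pi_{-n}G,\Z)$ for the surjection in the Anderson-duality universal coefficient sequence, and $\epsilon_G^\Q \colon \pi_n(I_\Z G)_\Q \xrightarrow{\simeq}\pi_n F(G,H\Q)\cong\Hom\big(\pi_{-n}(G)\otimes\Q,\Q\big)$ for its rational analogue, under the identification $(I_\Z G)_\Q\simeq F(G,H\Q)$ used in Construction~\ref{const_pairing_general}. Unwinding Construction~\ref{const_pairing_general}, one has $\langle e,f\rangle_\alpha=\epsilon_G(\alpha_*(e))(f)$ and $\langle e_\Q,f_\Q\rangle_\alpha^\Q=\epsilon_G^\Q\big((\alpha_\Q)_*(e_\Q)\big)(f_\Q)$, where $\alpha_*$ and $(\alpha_\Q)_*$ are the maps on homotopy groups induced by $\alpha$ and by its rationalization.

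First I would assemble two commutative squares. The first is just naturality of rationalization applied to the map $\alpha\colon E\to\Sigma^s I_\Z G$: the two composites $\pi_{-d}E\to\pi_{-d-s}I_\Z G\to\pi_{-d-s}(I_\Z G)_\Q$ and $\pi_{-d}E\to\pi_{-d}E_\Q\to\pi_{-d-s}(I_\Z G)_\Q$ agree, so $(\alpha_\Q)_*(e_\Q)$ is the image of $\alpha_*(e)$ under the rationalization map $\pi_{-d-s}I_\Z G\to\pi_{-d-s}(I_\Z G)_\Q$. The second square says that $\epsilon_G$ is compatible with rationalization: the square with top edge $\epsilon_G$, bottom edge $\epsilon^\Q_G$, left edge the rationalization $\pi_nI_\Z G\to\pi_n(I_\Z G)_\Q$, and right edge the inclusion $\Hom(\pi_{-n}G,\Z)\hookrightarrow\Hom(\pi_{-n}G,\Q)=\Hom\big(\pi_{-n}(G)\otimes\Q,\Q\big)$ should commute. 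Granting these, chaining them shows that $\epsilon^\Q_G\big((\alpha_\Q)_*(e_\Q)\big)$ is the image of the homomorphism $\phi:=\epsilon_G(\alpha_*(e))\in\Hom(\pi_{d+s}G,\Z)$ under $\Hom(\pi_{d+s}G,\Z)\hookrightarrow\Hom(\pi_{d+s}G,\Q)$.

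The hard part will be the second square, i.e.\ the compatibility of the Anderson-duality evaluation map with rationalization; everything else is bookkeeping. I would argue it as follows: under the identification $(I_\Z G)_\Q\simeq F(G,H\Q)=I_\Q G$, the rationalization $I_\Z G\to(I_\Z G)_\Q$ is the canonical comparison map $I_\Z G\to I_\Q G$ induced by $H\Z\to H\Q$, and the universal coefficient sequences for $I_\Z(-)$ and $I_\Q(-)$ are natural in the coefficient spectrum; hence on $\pi_{-n}$ the comparison map realizes precisely $\epsilon_G$ followed by $\Hom(\pi_{-n}G,\Z)\hookrightarrow\Hom(\pi_{-n}G,\Q)$. (For the spectra we actually pair against, such as $\MSpin/\MString$, the homotopy groups are finitely generated, so $I_{\Q/\Z}G$ has torsion homotopy and $I_\Z G\to I_\Q G$ is literally a rationalization; in general this identification is built into Construction~\ref{const_pairing_general}.) To finish I would note that evaluating $\phi$ on $f$ or on $f_\Q$ gives the same element of $\Z\subset\Q$: by definition $f_\Q$ is the image of $f$ under $\pi_{d+s}G\to\pi_{d+s}(G)\otimes\Q$, and $(\phi\otimes\Q)(f_\Q)=\phi(f)$ by definition of $\phi\otimes\Q$. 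Thus $\langle e,f\rangle_\alpha=\phi(f)=(\phi\otimes\Q)(f_\Q)=\langle e_\Q,f_\Q\rangle_\alpha^\Q$, which is \eqref{eq_pairing_compatibility}.
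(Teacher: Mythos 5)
The paper does not actually present a proof of this lemma; it states it immediately after the phrase ``By construction, we have the following compatibility,'' treating it as immediate from Construction~\ref{const_pairing_general}. Your write-up is a correct and careful unwinding of that ``by construction'' --- the two commutative squares you isolate (naturality of rationalization applied to $\alpha$, and compatibility of the Anderson-duality evaluation map with the comparison $I_\Z G\to I_\Q G$) are exactly what makes the paper's implicit argument work --- so this is the same approach, just made explicit. One small caveat worth flagging: the identification $(I_\Z G)_\Q\simeq F(G,H\Q)$ via vanishing of $(I_{\Q/\Z}G)_\Q$ requires $\pi_*G$ to be degreewise finitely generated, which does not literally hold for some of the auxiliary $G$ the paper runs the construction through (e.g.\ $\KO\lpar q\rpar$ and its cofibers); you correctly sidestep this by noting the paper simply posits that identification inside Construction~\ref{const_pairing_general}, so your argument only needs naturality of the evaluation $\pi_n F(G,Y)\to\Hom(\pi_{-n}G,\pi_0 Y)$ in $Y$, which is unconditional.
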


Thus, the values of $\Z$-valued pairing \eqref{eq_pairing_general} can be computed after rationalization, where the computation is typically easy. 
The point is that the integrality of the pairing is guaranteed. 
We also observe that the $\Z$-valued pairing only depends on the rationalization $\alpha_\Q$ of $\alpha$. 

We first apply the above construction to $\alpha=\gamma_{\K/\SQM} \colon \SQM \to \Sigma^{-2}I_\Z \K/\SQM$. 
We get a $\Z$-valued pairing
\begin{align}\label{eq_pairing_K/SQM}
    \langle -, - \rangle_{\gamma_{\K/\SQM}} \colon \pi_{-d}\SQM \times \pi_{d-2}\K/\SQM \to \Z. 
\end{align}

\begin{lem}[{The formula for $\langle -, - \rangle_{\gamma_{\K/\SQM}}$}]\label{lem_formula_pairing_K/SQM}
Let $d$ be an integer such that $d \equiv 0 \pmod 4$.  
Then in the following diagram
\begin{align}
    \xymatrix{
    \pi_{-d}\SQM \ar[d]_{\Psi}^-{\eqref{diag_K/SQM}} & \times&\pi_{d-2}\K/\SQM\ar[d]_-{\Ch/\Psi}^-{\eqref{diag_K/SQM}} \ar[rrrr]^-{\langle -, - \rangle_{\gamma_{\K/\SQM}}} &&&& \Z \ar@{_{(}-_>}[d]\\
     \pi_{-d}\Sigma^{4\Z}H\Q =\Q & \times & \pi_{d-2}\Sigma^{4\Z + 2}H\Q = \Q \quad
     \ar[rrrr]^-{
    \cdot_\Q} &&&& \Q 
     }, 
\end{align}
the two pairings are compatible, i.e., we have
    \begin{align}\label{eq_formula_pairing_K/SQM}
        \langle \mathcal{T}, \mathcal{R} \rangle_{\gamma_{\K/\SQM}} = \Psi(\mathcal{T}) \cdot_\Q (\Ch/\Psi)(\mathcal{R}) . 
    \end{align}
    for all $\mathcal{T} \in \pi_{-d}\SQM$ and $\mathcal{R} \in \pi_{d-2}\K/\SQM$. 
    Here $\cdot_\Q$ is the multiplication in $\Q$. 
\end{lem}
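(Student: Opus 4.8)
The plan is to push the computation to the rational pairing, where it becomes a short manipulation with the index-theoretic normalizations built into \eqref{diag_spin_orientation} and \eqref{diag_K/SQM}.

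\emph{Reduction to $\Q$-coefficients.} I would first apply Lemma~\ref{lem_pairing_compatibility} to $\alpha=\gamma_{\K/\SQM}\colon\SQM\to\Sigma^{-2}I_\Z\K/\SQM$, so that $\langle\mathcal T,\mathcal R\rangle_{\gamma_{\K/\SQM}}=\langle\mathcal T_\Q,\mathcal R_\Q\rangle_{\gamma_{\K/\SQM}}^{\Q}$ for all $\mathcal T\in\pi_{-d}\SQM$ and $\mathcal R\in\pi_{d-2}\K/\SQM$; the right-hand side depends only on the rationalized map $\gamma_{\K/\SQM,\Q}\colon\SQM_\Q\to\Sigma^{-2}F(\K/\SQM,H\Q)$, so in particular the $\Ext$-ambiguity in the choice of $\gamma_{\K/\SQM}$ (Remark~\ref{rem_gamma_K/SQM}) plays no role. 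Next, since by Definition~\ref{def_beta_rel} the map $\gamma_{\K/\SQM}$ is multiplication by a class in $\pi_2I_\Z(\K/\SQM)$ inside the $\SQM$-module $I_\Z(\K/\SQM)$, unwinding Construction~\ref{const_pairing_general} via associativity of the module action and of the Anderson evaluation pairing should give
\[
\langle\mathcal T,\mathcal R\rangle_{\gamma_{\K/\SQM}}=p(\gamma_{\K/\SQM})\big(\mathcal T\cdot\mathcal R\big),
\]
where $\mathcal T\cdot\mathcal R\in\pi_{-2}\K/\SQM$ is the $\SQM$-module product and $p(\gamma_{\K/\SQM})\colon\pi_{-2}\K/\SQM\to\Z$ is the $\Hom$-component of $\gamma_{\K/\SQM}$ as in Remark~\ref{rem_gamma_K/SQM}; the same formula holds over $\Q$.

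\emph{Computing the two factors.} The right-hand square of \eqref{diag_spin_orientation} is a square of $E_\infty$ rings, so passing to vertical cofibers exhibits $(\Ch/\Psi)\colon\K/\SQM\to\Sigma^{4\Z+2}H\Q$ as a map of $\SQM$-modules, where $\Sigma^{4\Z+2}H\Q$ is an $\SQM$-module through $\Psi$. Since $d\equiv0\pmod4$ one has $\pi_{-2}(\K/\SQM)_\Q=\pi_{-2}\Sigma^{4\Z+2}H\Q=\Q$, and because the action $\Sigma^{4\Z}H\Q\wedge\Sigma^{4\Z+2}H\Q\to\Sigma^{4\Z+2}H\Q$ is just multiplication of rational numbers in the relevant degree-summands, $\SQM$-linearity of $(\Ch/\Psi)$ yields
\[
(\Ch/\Psi)_\Q(\mathcal T_\Q\cdot\mathcal R_\Q)=\Psi(\mathcal T)\cdot_\Q(\Ch/\Psi)(\mathcal R).
\]
It remains to identify $p(\gamma_{\K/\SQM})$ on $\pi_{-2}\K/\SQM$. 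By Remark~\ref{rem_gamma_K/SQM} the composite $p(\gamma_{\K/\SQM})\circ C\psi\colon\pi_{-2}\K\to\Z$ is the isomorphism $\beta^{-1}\mapsto1$, and $C\psi\colon\pi_{-2}\K\to\pi_{-2}\K/\SQM$ is surjective because the connecting map to $\pi_{-3}\SQM=0$ vanishes (Assumption~\ref{ass_SQM_3}). On the other hand, by commutativity of \eqref{diag_K/SQM} and the chosen normalization of the projection $\Sigma^{2\Z}H\Q\to\Sigma^{4\Z+2}H\Q$ one computes $(\Ch/\Psi)(C\psi(\beta^{-1}))=\mathrm{proj}(\Ch(\beta^{-1}))=1$. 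Thus $p(\gamma_{\K/\SQM})$ and the composite $\pi_{-2}\K/\SQM\xrightarrow{(\Ch/\Psi)}\pi_{-2}\Sigma^{4\Z+2}H\Q=\Q$ agree after precomposition with the surjection $C\psi$, hence agree. Combining this with the previous display gives $\langle\mathcal T,\mathcal R\rangle_{\gamma_{\K/\SQM}}=\Psi(\mathcal T)\cdot_\Q(\Ch/\Psi)(\mathcal R)$.

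\emph{Expected main obstacle.} The one step that requires genuine care is the second half of the first paragraph: showing that the pairing of Construction~\ref{const_pairing_general}, attached to a map which is multiplication by a class in $\pi_2 I_\Z(\K/\SQM)$, is literally ``take the $\SQM$-module product $\mathcal T\cdot\mathcal R\in\pi_{-2}\K/\SQM$, then apply $p(\gamma_{\K/\SQM})$'' --- i.e.\ matching the abstract universal-coefficients definition with the Anderson evaluation pairing --- while simultaneously keeping the three grading conventions on $\Sigma^{4\Z}H\Q$, $\Sigma^{2\Z}H\Q$, $\Sigma^{4\Z+2}H\Q$ and their module structures over $\SQM$ consistent throughout. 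Everything else is degree bookkeeping plus the surjectivity and normalization facts already recorded in \eqref{diag_K/SQM} and Remark~\ref{rem_gamma_K/SQM}.
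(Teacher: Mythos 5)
Your proof is correct, but it is organized differently from the paper's. The paper sandwiches $\langle-,-\rangle_{\gamma_{\K/\SQM}}$ between the Anderson self-duality pairing of $\K$ in degrees $(-d,\,d-2)$ --- which is literally integer multiplication $\Z\times\Z\to\Z$ --- and the rational multiplication pairing, passing between the three rows via the commutativity of \eqref{diag_K/SQM} and Remark \ref{rem_gamma_K/SQM}. You instead use the $\SQM$-module structure on $I_\Z (\K/\SQM)$ to collapse the bilinear pairing to the evaluation of the single functional $p(\gamma_{\K/\SQM})$ on the product $\mathcal T\cdot\mathcal R\in\pi_{-2}\K/\SQM$, identify that functional with $\Ch/\Psi$ in degree $-2$ (where $C\psi$ is surjective precisely because $\pi_{-3}\SQM=0$), and then peel off $\Psi(\mathcal T)$ by $\SQM$-linearity of $\Ch/\Psi$. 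One advantage of your route is that it isolates exactly where Assumption \ref{ass_SQM_3} enters and makes the ``for all $\mathcal R$'' quantifier unproblematic: the paper's chain through $\pi_{d-2}\K$ a priori establishes the formula only on the image of $C\psi$ in degree $d-2$, and covering a general $\mathcal R$ requires passing to rationalizations (Lemma \ref{lem_pairing_compatibility}), where $C\psi_\Q$ is surjective. The price is the module-theoretic bookkeeping you correctly flag as the main obstacle --- verifying that Construction \ref{const_pairing_general} applied to a multiplication map is ``multiply, then evaluate'' --- which the paper's diagrammatic argument absorbs into the commutativity of \eqref{diag_K/SQM}. Both arguments are sound.
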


\begin{proof}
Let us denote by $\gamma_{\K} \colon \K \to \Sigma^{-2}I_\Z \K$ the Anderson duality of K-theory composed with the Bott periodicity. It is equivalent to the multiplication by $\gamma_{\K} \in \pi_{-2}I_\Z \K$ in Definition \ref{def_gamma_K}. 
Consider the following diagram. 
\begin{align}
\xymatrix{
\pi_{-d}\SQM \ar[d]^-{\psi}\ar@/_50pt/[dd]_(.7){\Psi} & \times&\pi_{d-2}\K/\SQM \ar[rrrr]^-{\langle -, - \rangle_{\gamma_{\K/\SQM}}} \ar@/_50pt/[dd]_(.7){\Ch/\Psi} &&&& \Z \ar@{=}[d]\\
    \pi_{-d}\K  \simeq \Z \ar@{_{(}-_>}[d]^-{\Ch} & \times&\pi_{d-2}\K \simeq \Z \ar@{_{(}-_>}[d]^-{\Ch} \ar[u]^-{C\psi} \ar[rrrr]^-{\langle -, - \rangle_{\gamma_{\K}}= (-)\cdot_\Z (-)} &&&& \Z\ar@{_{(}-_>}[d] \\
    \pi_{-d}\Sigma^{4\Z}H\Q =\Q & \times & \pi_{d-2}\Sigma^{4\Z + 2}H\Q = \Q \quad
     \ar[rrrr]^-{
    \cdot_\Q} &&&& \Q
}
\end{align}
The two triangles are commutative by the commutativity of \eqref{diag_K/SQM}. 
The equality $\langle -, - \rangle_{\gamma_{\K}}= (-)\cdot_\Z (-)$ in the middle row follows by definition of $\gamma_K$. 
The pairings in the second and third rows are compatible since the Chern character induces the canonical inclusion $\Z \hookrightarrow \Q$ in our identifications. 
The pairings in the first and second rows are also compatible, in the sense that we have
\begin{align}
   \langle \mathcal{T}, C\psi(\mathcal{V})\rangle_{\gamma_{\K/\SQM}} =
   \langle \psi(\mathcal{T}), \mathcal{V}\rangle_{\gamma_{\K}}
\end{align}
for each $\mathcal{T} \in \pi_{-d}\SQM$ and $\mathcal{V} \in \pi_{d-2}\K/\SQM$ by Remark \ref{rem_gamma_K/SQM}. 
Thus we get the compatibility of the first and third rows, which completes the proof of Lemma \ref{lem_formula_pairing_K/SQM}.  
\end{proof}

Now we apply Construction \ref{const_pairing_general} to $\alpha = \gamma_{\spin^c/\spin} \colon \SQM \to \Sigma^{-2}I_\Z \MSpin^c/\MSpin$. We get a $\Z$-valued pairing
\begin{align}\label{eq_pairing_SQM}
    \langle -, - \rangle_{\gamma_{\spin^c/\spin}} \colon \pi_{-d} \SQM \times \pi_{d-2} \MSpin^c/\MSpin \to \Z. 
\end{align}
We have the following formula for the pairing $\langle -, - \rangle_{\gamma_{\spin^c/\spin}}$. 

\begin{prop}[{the formula for the pairing $\langle -, - \rangle_{\gamma_{\spin^c/\spin}}$}]\label{prop_formula_pairing_SQM}
Let $d$ be an integer such that $d \equiv 0 \pmod 4$. 
    Then the pairing \eqref{eq_pairing_SQM}
    associated to the morphism $\gamma_{\spin^c/\spin}$ is 
    given by the following formula. 
    \begin{align}\label{eq_formula_pariring_SQM_inProp}
        \langle \mathcal{T}, [M, N]\rangle_{\gamma_{\spin^c/\spin}}
    =\Psi(\mathcal{T}) \cdot_\Q \Ind_\rel ([M, N]) . 
    \end{align}
    In particular, the pairing \eqref{eq_pairing_SQM} does not depend on the choice made in Definition \ref{def_beta_rel}. 
\end{prop}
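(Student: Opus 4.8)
The plan is to reduce the statement to Lemma \ref{lem_formula_pairing_K/SQM} via the naturality of Construction \ref{const_pairing_general}. Recall from Definition \ref{def_beta_rel} the factorization $\gamma_{\spin^c/\spin} = I_\Z\rho_\rel \circ \gamma_{\K/\SQM}$, where $\rho_\rel \colon \MSpin^c/\MSpin \to \K/\SQM$ is the middle vertical arrow in the right column of \eqref{diag_K/SQM}. So the whole point is to understand how the pairing of Construction \ref{const_pairing_general} behaves under postcomposing $\alpha$ with a map of the form $I_\Z f$.

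First I would record the following general compatibility for Construction \ref{const_pairing_general}: if $\alpha \colon E \to \Sigma^{s} I_\Z G'$ is a map of spectra and $f \colon G \to G'$ is any map of spectra, then for the composite $I_\Z f \circ \alpha \colon E \to \Sigma^{s}I_\Z G$ one has
\begin{align}
    \langle e, g \rangle_{I_\Z f \circ \alpha} = \langle e, f_*(g) \rangle_{\alpha}
\end{align}
for all $e \in \pi_{-d}E$ and $g \in \pi_{d+s}G$, where $f_* \colon \pi_{d+s}G \to \pi_{d+s}G'$. This is immediate on unwinding the definition: the canonical map $\pi_{-d-s}I_\Z(-) \to \Hom(\pi_{d+s}(-),\Z)$ in the universal-coefficient short exact sequence for Anderson duality is natural, and the contravariant functor $I_\Z$ takes $f$ to a map that on these $\Hom$-quotients is precomposition with $f_*$. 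The only thing to be careful about here is keeping the variances straight — $I_\Z$ is contravariant, whereas this compatibility makes the $\Z$-valued pairing covariant in $G$ — and this bookkeeping is really the sole non-trivial point in the argument.

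Applying this with $E = \SQM$, $s = -2$, $\alpha = \gamma_{\K/\SQM}$, $G' = \K/\SQM$, $G = \MSpin^c/\MSpin$, $f = \rho_\rel$ gives $\langle \mathcal{T}, [M, N] \rangle_{\gamma_{\spin^c/\spin}} = \langle \mathcal{T}, \rho_\rel([M, N]) \rangle_{\gamma_{\K/\SQM}}$ for $\mathcal{T} \in \pi_{-d}\SQM$ and $[M, N] \in \pi_{d-2}\MSpin^c/\MSpin$ with $d \equiv 0 \pmod 4$. Since $d \equiv 0 \pmod 4$, Lemma \ref{lem_formula_pairing_K/SQM} evaluates the right-hand side as $\Psi(\mathcal{T}) \cdot_\Q (\Ch/\Psi)(\rho_\rel([M, N]))$. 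Finally, chasing the right column of \eqref{diag_K/SQM} identifies $(\Ch/\Psi) \circ \rho_\rel$ with the composite defining $\Ind_\rel$, so $(\Ch/\Psi)(\rho_\rel([M, N])) = \Ind_\rel([M, N])$, which is exactly \eqref{eq_formula_pariring_SQM_inProp}. The concluding sentence of the proposition is then automatic: the right-hand side of \eqref{eq_formula_pariring_SQM_inProp} makes no reference to the lift chosen in Definition \ref{def_beta_rel}, in agreement with Remark \ref{rem_gamma_K/SQM} where the ambiguity was already seen to lie in $\Ext(\pi_{-3}\K/\SQM, \Z)$ and hence to be invisible to the $\Z$-valued pairing. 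I expect no real obstacle beyond the naturality bookkeeping in the first step.
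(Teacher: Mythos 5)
Your proposal is correct and follows essentially the same route as the paper: the paper's proof is precisely the three-row diagram chase consisting of (i) the compatibility $\langle \mathcal{T}, [M,N]\rangle_{\gamma_{\spin^c/\spin}} = \langle \mathcal{T}, \rho_\rel([M,N])\rangle_{\gamma_{\K/\SQM}}$ coming from the factorization $\gamma_{\spin^c/\spin} = I_\Z\rho_\rel\circ\gamma_{\K/\SQM}$, (ii) Lemma \ref{lem_formula_pairing_K/SQM}, and (iii) the identity $\Ind_\rel = (\Ch/\Psi)\circ\rho_\rel$ from \eqref{diag_K/SQM}. Your explicit statement of the naturality of Construction \ref{const_pairing_general} under postcomposition with $I_\Z f$ is just a more careful spelling-out of what the paper calls the compatibility of the first and second rows.
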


\begin{proof}
Consider the following diagram. 
\begin{align}\label{diag_proof_formula_pairing_SQM}
    \xymatrix{
    \pi_{-d}\SQM \ar@{=}[d] & \times & \pi_{d-2}\MSpin^c/\MSpin \ar[d]_-{\rho_\rel} \ar@/_60pt/[dd]_(.7){\Ind_\rel} \ar[rrrr]^-{\langle -, - \rangle_{\gamma_{\spin^c/\spin}}} &&&& \Z \ar@{=}[d]\\
   \pi_{-d}\SQM \ar[d]_{\Psi}^-{\eqref{diag_K/SQM}} & \times&\pi_{d-2}\K/\SQM\ar[d]_-{\Ch/\Psi}^-{\eqref{diag_K/SQM}} \ar[rrrr]^-{\langle -, - \rangle_{\gamma_{\K/\SQM}}} &&&& \Z \ar@{_{(}-_>}[d]\\
     \pi_{-d}\Sigma^{4\Z}H\Q =\Q & \times & \pi_{d-2}\Sigma^{4\Z + 2}H\Q = \Q \quad
     \ar[rrrr]^-{
    \cdot_\Q} &&&& \Q 
     }
\end{align}
The pairings in the first and second rows are compatible by the definition of $\gamma_{\spin^c/\spin}$ in \eqref{eq_def_gamma_spinc/spin}. 
The compatibility of the second and third rows follow from Lemma \ref{lem_formula_pairing_K/SQM}. 
The middle triangle is commutative because of the commutativity of \eqref{diag_K/SQM}. 
Combining these, we get the desired formula \eqref{eq_formula_pariring_SQM_inProp}. This completes the proof of Proposition \ref{prop_formula_pairing_SQM}. 
\end{proof}

\begin{defn}\label{def_SQMpairing}
    By the last sentence of Proposition \ref{prop_formula_pairing_SQM}, we abbreviate the notation to denote the pairing \eqref{eq_pairing_SQM} just by 
    \begin{align}\label{eq_pairing_noindex_SQM}
    \langle -, - \rangle_{\SQM} \colon \pi_{-d} \SQM \times \pi_{d-20} \MSpin^c/\MSpin \to \Z,
\end{align}
and call it the {\it secondary $\Z$-valued pairing for $\SQM$. }. 
\end{defn}

\subsection{Proof of the lower bound $8$ (Theorem \ref{thm_SQM})}\label{subsec_proof_SQM}

In this section, we prove Theorem \ref{thm_SQM}. 
First observe that, since $\Psi$ is a morphism of ring spectra, the induced map 
\begin{align}\label{eq_Psi}
    \Psi \colon \pi_{*}\SQM \to \oplus_{k \in \Z}\Q[4k]
\end{align}
is a homomorphism of graded rings. 
The image of \eqref{eq_Psi} is a unital graded subring, and it is enough to show that the subring does not contain any invertible element in degree not divisible by $8$. 
Thus, Theorem \ref{thm_SQM} follows from the following proposition. 

\begin{prop}\label{prop_lowerbound_SQM}
Let $d$ be an integer so that $d \equiv 4 \pmod 8$. 
    If $x\in \Q$ is contained in the image of
    \begin{align}
        \Psi \colon \pi_{d}\SQM \to \pi_{d}\Sigma^{4\Z}H\Q =\Q, 
    \end{align}
    then we have $x \in 2\Z \subset \Q$. 
\end{prop}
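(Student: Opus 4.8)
The plan is to imitate the proof of Proposition~\ref{prop_1_intro}, now using the secondary pairing $\langle-,-\rangle_\SQM$ of Definition~\ref{def_SQMpairing} and its explicit formula \eqref{eq_formula_SQMpairing}: I will pair $\mathcal T\in\pi_d\SQM$ against a bordism class whose relative Dirac index is $\tfrac{1}{2}$ times an odd integer, and then read off the integrality constraint. The pairing only carries information when the relevant bordism group is nonzero, i.e. for $d\le -4$; so assume first that $d\equiv 4\pmod 8$ with $d\le -4$ and put $\ell:=\tfrac{-d-4}{8}\in\Z_{\ge 0}$, so that the pairing in this degree is
\begin{align}
\langle-,-\rangle_\SQM\colon\pi_{d}\SQM\times\pi_{8\ell+2}\MSpin^c/\MSpin\longrightarrow\Z .
\end{align}

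For the bordism class, begin with $z_0:=[D^2,S^1_\Lie]\in\pi_2\MSpin^c/\MSpin$ from Example~\ref{ex_D2S1}, which has $\Ind_\rel(z_0)=\tfrac{1}{2}$ by Lemma~\ref{lem_Ind_rel_D2S1}, and fix a closed spin $8$-manifold $B$ with $\hat A(B)$ odd (one may take $B$ to be the Bott manifold, for which $\hat A(B)=1$; this is the only geometric input and it uses no knowledge of $\KO$). Using the $\MSpin$-module structure on $\MSpin^c/\MSpin$, set
\begin{align}
z:=[B]^{\ell}\cdot z_0=\bigl[\,B^{\ell}\times D^2,\ B^{\ell}\times S^1_\Lie\,\bigr]\in\pi_{8\ell+2}\MSpin^c/\MSpin .
\end{align}
Since every arrow in \eqref{diag_K/SQM} is a map of modules over the evident $E_\infty$-ring maps (the cofibre-sequence map $(\iota,\ABS_{\spin^c},\rho_\rel)$ is $\rho_\spin$-linear, and $\Ch/\Psi$ is $\Psi$-linear), the composite $\Ind_\rel=(\Ch/\Psi)\circ\rho_\rel$ is $\pi_*\MSpin$-linear over $\Ind_\spin$; as $\Ind_\spin([B])=\hat A(B)$, this gives
\begin{align}
\Ind_\rel(z)=\hat A(B)^{\ell}\cdot\Ind_\rel(z_0)=\frac{\hat A(B)^{\ell}}{2},
\end{align}
which is $\tfrac{1}{2}$ times an odd integer.

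Now let $\mathcal T\in\pi_d\SQM$ and write $x:=\Psi(\mathcal T)$. Since $d\equiv 0\pmod 4$, Proposition~\ref{prop_formula_pairing_SQM} applies, and the $\Z$-valuedness of the pairing yields
\begin{align}
\Z\ni\langle\mathcal T,z\rangle_\SQM=x\cdot\Ind_\rel(z)=\frac{\hat A(B)^{\ell}}{2}\,x .
\end{align}
As $\hat A(B)^{\ell}$ is odd this forces $x\in 2\Z$, which is the assertion for $d\le -4$. For $d>0$ the argument above is vacuous, but this case is not needed for Theorem~\ref{thm_SQM}: together with the integrality $\Psi(\pi_*\SQM)\subseteq\bigoplus_k\Z[4k]$ (which holds because $\Psi=\Ch\circ\psi$ and $\pi_n\K$ is the integral lattice in $\pi_n\K_\Q$), the case $d\le -4$ already suffices, since for an invertible $u\in\pi_m\SQM$ with $m\equiv 4\pmod 8$ exactly one of $m,-m$ is negative, the bound in that degree makes $\Psi$ of the corresponding unit an element of $2\Z\setminus\{0\}$, and then $\Psi$ of the other unit would be a nonzero integer of absolute value $\le\tfrac{1}{2}$.

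The step needing the most care is the multiplicativity of $\Ind_\rel$ over $\Ind_\spin$, i.e. verifying that the maps appearing in \eqref{diag_K/SQM}, including the rationalizations and character maps, are genuinely module maps over the relevant ring maps; the remaining ingredients — the existence of a spin $8$-manifold of odd $\hat A$-genus and the diagram chase with the pairing — are routine. This is intentionally the easy model case: the corresponding step in Proposition~\ref{prop_intro} is the serious one, since there one must exhibit classes in $\pi_*\MSpin/\MString$ of prescribed relative Witten genus in every relevant degree, which requires the Toda-bracket construction of manifolds recalled in Appendix~\ref{app:Toda}.
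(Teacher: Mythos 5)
Your proof is essentially the same as the paper's, with the degree shift performed on the bordism side instead of the $\SQM$ side. The paper uses the Bott manifold $B$ (with $\hat A(B)=1$) to observe that $1\in\mathrm{Im}(\Psi)$ in degree $8$ and thereby ``shift the degree by $8$,'' reducing to $d=-4$, where it pairs against $[D^2,S^1_\Lie]$. You instead pair the class in $\pi_d\SQM$ directly against $[B]^\ell\cdot[D^2,S^1_\Lie]\in\pi_{8\ell+2}\MSpin^c/\MSpin$; by bilinearity of the secondary pairing (which is an $\SQM$--$\MSpin$ module pairing) the two computations are interchangeable, and both ultimately rest on Lemma~\ref{lem_Ind_rel_D2S1} together with the existence of an $8$-dimensional spin manifold of odd $\hat A$-genus.

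One genuinely useful thing you add is the explicit observation that this argument only reaches $d\le -4$: the paper's degree shift via $1\in\mathrm{Im}(\Psi)_8$ also only moves \emph{down} in degree (from $d<-4$ to $-4$), so the paper's proof as written establishes precisely the same range, even though the proposition is stated for all $d\equiv 4\pmod 8$. Your remark that the $d>0$ case is unreachable by this method (the relevant $\pi_{-d-2}\MSpin^c/\MSpin$ vanishes for $d\ge 0$ by connectivity) and that the theorem nevertheless follows by combining $d\le -4$ with the integrality $\Psi(\pi_*\SQM)\subseteq\bigoplus_k\Z[4k]$ is correct and tidies up a small imprecision in the paper. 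Your flagging of the $\MSpin$-module-map issue for the diagram defining $\Ind_\rel$ is also legitimate --- the paper states this explicitly only for the $\MString$-module version of the analogous diagram in Section~3, not in Section~2 --- but it is a routine verification given the $E_\infty$ hypotheses.
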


\begin{proof}
First we observe that it is enough to show the claim for $d=-4$. 
Indeed, we know that there is a $8$-dimensional closed spin manifold $M$ such that $\Ind_{\spin}(M) = 1 \in \pi_{8}\Sigma^{4\Z}H\Q = \Q$. In particular, we see that the image of $\Psi$ contains $1$ at degree $8$, allowing us to shift the degree by $8$. So it is enough to show the claim for $d=-4$. 

    We use the pairing
    \begin{align}
        \langle-,-\rangle_{\SQM}\colon \pi_{-4}\SQM \times \pi_{2}\MSpin^c/\MSpin\to \bZ.
    \end{align}
    
    Suppose there exists $\mathcal{T} \in \pi_{-4} \SQM$ with $\Psi(\mathcal{T}) = x$. 
    We pair it with the element $[D^2, S^1_\Lie] \in \pi_2 \MSpin^c/\MSpin$ in Example \ref{ex_D2S1}. Recall from Lemma \ref{lem_Ind_rel_D2S1} that we have $\Ind_\rel([D^2, S^1_\Lie])= \frac{1}{2}$. 
    Then, by the formula \eqref{prop_formula_pairing_SQM} and the integrality of the pairing, we get
    \begin{align}
        \Z &\ni \langle \mathcal{T}, [D^2, S^1_\Lie] \rangle_{\SQM}
        = \Psi(x) \cdot_{\Q} \Ind_{\rel}([D^2, S^1_\Lie])
        = \frac{x}{2}. 
    \end{align}
    This gives the desired result. 
\end{proof}

\section{The $\ge 576$-periodicity of the spectrum $\SQFT$}\label{sec_lowerbound}

This is the main section of this paper. 
Our goal is to a geometric understanding of the lower bound $576$ of the periodicity for a class of spectra which includes $\TMF$. 

We formulate the problem as outlined in the Introduction. 
In this section, we do NOT use any knowledge of $\TMF$. 
But we allow ourselves to use the knowledge of $\MString$, $\MSpin$ and $\KO$, as well as the Ando-Hopkins-Rezk orientation \cite{AHR10} for spin manifolds $\AHR_\spin \colon \MSpin \to \KO((q))$. 
We set up the problem in Subection \ref{subsec_assumptions}. 
We start from an arbitrary $E_\infty$ ring spectrum $\SQFT$, and put assumptions which connects it with string manifolds and modular forms, as well as a vanishing assumption of homotopy group of a particular degree, namely $-21$. 
Then in the rest of this section we show that the periodicity of $\SQFT$ is at least $576$ (Theorem \ref{thm_main}). 
We construct a $\Z$-valued Anderson duality pairing (Definition \ref{def_SQFTpairing}) in Subsection \ref{subsec_pairing}. 
Then we use it to prove Theorem \ref{thm_intro} in Subsection \ref{subsec_proof_SQFT}.

\subsection{Assumptions for the spectrum $\SQFT$}\label{subsec_assumptions}
Here we list the mathematical assumptions for the spectrum $\SQFT$. 

\begin{ass}\label{ass_string_orientation}
    $\SQFT$ is an $E_\infty$ ring spectrum equipped with an $E_\infty$ ring maps
    \begin{align}
        \sigma_{\stri} &\colon \MString \to \SQFT, \\
        \varphi &\colon \SQFT \to \KO((q)), \\
        \Phi &\colon \SQFT \to H\MF^\Q, 
    \end{align}
    where $H\MF^\Q$ is the Eilenberg-Maclane spectrum with coefficient being the graded ring $\MF^Q$ with grading twice that of the modular forms. 
    These maps fit into a commutative diagram in the $(\infty, 1)$-category of $E_\infty$ ring spectra, 
    \begin{align}\label{diag_rationalization_SQFT}
        \xymatrix{
        \MString \ar[d]^-{\iota} \ar[rr]^-{\sigma_\stri} \ar@/^18pt/[rrr]^-{\Wit_{\stri}}  && \SQFT \ar[d]^-{\varphi} \ar[r]^-{\Phi} &  H\MF^\Q \ar@{_{(}-_>}[d] \\
        \MSpin \ar[rr]^-{\AHR_{\spin}} \ar@/_18pt/[rrr]_-{\Wit_{\spin}} && \KO((q)) \ar[r]^-{\mathrm{Ph}} & \Sigma^{4\Z}H\Q((q))
        }. 
    \end{align}
    where the left vertical arrow is the forgetful map, the right vertical arrow is induced by the inclusion $\MF^\Q_* \hookrightarrow \oplus_{k \in \Z}\Q((q))[2k]$ in \eqref{eq_MF_notation}, and $\AHR_\spin \colon \MSpin \to \KO((q))$ is the usual sigma-orientation for $\MSpin$ used in \cite{AHR10}. 
    Here, as written in the diagram, we define $\Wit_\stri := \Phi \circ \sigma_\stri$ and $\Wit_{\spin} := \Ph \circ \AHR_{\spin}$. 
\end{ass}

\begin{rem}
    $\TMF$ satisfies Assumption \ref{ass_string_orientation}, by setting $\sigma_\stri := \AHR_{\stri}$, the Ando-Hopkins-Rezk orientation of $\TMF$ for string manifolds \cite{AHR10}, 
    $\varphi:= \varphi_{\TMF} \colon \TMF \to \KO((q))$ to be the map induced by the restriction of the global section $\TMF:= \Gamma(\mathcal{M}_{\mathrm{ell}}, \mathcal{O}^{\mathrm{top}})$ to the smooth part of the Tate moduli $\mathcal{M}_\mathrm{ Tate} \subset \overline{\mathcal{M}_{\mathrm{ell}}}$ \cite{HillLawson},
    and $\Phi:= \Phi_\TMF \colon \TMF \to H\MF^\Q$ to be the map induced by the base change $\mathcal{M}_{\mathrm{ell}/\bC} \simeq \mathbb{H}//\mathrm{SL}(2, \Z) \to \mathcal{M}_{\mathrm{ell}}$. 
\end{rem}

\begin{rem}
    We use the notation $\Wit_{\stri}$ and $\Wit_{\spin}$ because they are the map extracting the $\Z((q))$-valued Witten genera. 
    Indeed, the morphism $\Wit_{\spin} := \Ph \circ \AHR_{\spin}$ is such a map by definition. 
    For $\Wit_\stri$, although we do not define it by the AHR orientation for string manifolds, the commutativity of \eqref{diag_rationalization_SQFT} forces us to conclude that $\Wit_\stri$ coincides with the composition $\MString \xrightarrow{\AHR_{\stri}} \TMF \xrightarrow{\Phi_{\TMF}} H\MF^\Q$, so that it is the map extracting $\Z((q))$-valued Witten genera for string manifolds.
\end{rem}

\begin{ass}\label{ass_21}
    We have 
    \begin{align}
        \pi_{-21}\SQFT = 0.  
    \end{align}
\end{ass}

\begin{rem}
    $\TMF$ satisfies Assumption \ref{ass_21} \cite[Chapter 13]{TMFBook}. 
\end{rem}

\begin{thm}\label{thm_main}
    There is no invertible elements in $\pi_n \SQFT$ for $n\not\equiv 0 \pmod{576}$. 
    In other words, the periodicity (possibly $\infty$) of the spectrum $\SQFT$ is at least $576$. 
\end{thm}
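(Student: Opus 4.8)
The plan is to reduce Theorem~\ref{thm_main} to Proposition~\ref{prop_intro} (= Proposition~\ref{prop_main}), exactly as the Introduction indicates, and then to prove that Proposition by the pairing method illustrated in Proposition~\ref{prop_1_intro}. First I would argue the reduction: since $\Phi\colon \pi_*\SQFT \to \MF_{*/2}$ is a graded ring homomorphism (because the right square of \eqref{diag_rationalization_SQFT} commutes and the codomain can be taken to be the \emph{integral} weakly holomorphic modular forms), the image $R := \mathrm{Im}(\Phi) \subset \MF_{*/2}$ is a unital graded subring. Any invertible element $u \in \pi_n\SQFT$ maps to an invertible element $\Phi(u) \in R$; but the invertible elements of $\MF_{*/2}$ are exactly $\pm\Delta^d$, which lie in degrees $24d$. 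Hence if $\pi_n\SQFT$ contains a unit, then $R$ contains a unit in degree $n$, and by Proposition~\ref{prop_intro} the periodicity of $R$ is at least $576$, forcing $576 \mid n$. (Strictly, one shows: if $R$ has a unit in degree $n$ and another in degree $n'$, then it has units in all $\Z$-combinations of $n,n'$; so the set of degrees carrying units of $R$ is a subgroup of $24\Z$, and Proposition~\ref{prop_intro} says this subgroup is contained in $576\Z$.)

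Next I would set up the proof of Proposition~\ref{prop_main}. The invertible elements of $R$ form, as above, a subgroup of $24\Z$; equivalently the periodicity is $24 D$ where $D \mid 24$ is such that $\Delta^{-D} \in R$ but $\Delta^{-d} \notin R$ for $0 < d < D$ with $d \mid D$. Since $576 = 24 \cdot 24$, it suffices to show: for each proper divisor $d$ of $24$ (i.e.\ $d \in \{1,2,3,4,6,8,12\}$, but really only the maximal ones $d=8$ and $d=12$ matter), no nonzero multiple\,---\,and more precisely no unit multiple\,---\,of $\Delta^{-d}$ lies in $R$; combined with Proposition~\ref{prop_1_intro} ($d=1$), one concludes the periodicity cannot be $24d$ for any $d \mid 24$, $d < 24$, hence is at least $576$. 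In practice the argument is run degree by degree: for the relevant degrees $-24d$ with $d \in \{1,2,3,4,6,8,12\}$, one uses the Anderson duality pairing \eqref{eq_pairing_intro}
\[
  \langle -,- \rangle_\SQFT \colon \pi_{-24d}\SQFT \times \pi_{24d-20}\MSpin/\MString \to \bZ
\]
together with its explicit formula \eqref{eq_formula_pairing_intro}. If $k\Delta^{-d} \in \mathrm{Im}(\Phi\colon \pi_{-24d}\SQFT \to \MF_{-12d})$, then for any class $[M,N] \in \pi_{24d-20}\MSpin/\MString$ the integrality of the pairing forces
\[
  \tfrac{k}{2}\,\bigl(\Delta^{1-d}\,\Wit_\rel([M,N])\bigr)\big|_{q^0} \in \bZ .
\]
The task is then to produce, for each such $d$, a manifold class $[M,N]$ whose relative Witten genus $\Wit_\rel([M,N]) \in \Q((q))$ has a $q$-expansion making $\tfrac12(\Delta^{1-d}\Wit_\rel([M,N]))|_{q^0}$ equal to a fraction whose denominator forces $24 \mid k$ (or more generally whatever divisibility is needed to rule out a unit $k$, i.e.\ to prevent $k\Delta^{-d}$ from being invertible in $R$). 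This is where one invokes the structure of $\pi_*\MSpin/\MString$ in degrees $4, 28, 52, 76, 124, 172, 268$ and the Toda-bracket/manifold constructions of Appendix~\ref{app:Toda}.

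The main obstacle is exactly this last step: exhibiting the required bordism classes in $\pi_{24d-20}\MSpin/\MString$ with prescribed relative Witten genera. For $d=1$ it is easy ($[D^4, S^3_\Lie]$, Lemma~\ref{lem_Ind_rel_D4S3}), but for larger $d$ the relevant homotopy groups are complicated and the classes one needs are not represented by obvious geometric manifolds; one must instead build them via the manifold-level Toda bracket construction, reading off the needed data from the Adams--Novikov spectral sequences for $\tmf$ and related spectra. Organizing these case analyses\,---\,choosing the right Toda brackets, verifying the relative Witten genus computations, and checking that the resulting denominators are the prime powers dividing $576$ but not $24d$ for each proper divisor $d$\,---\,is the technical heart of Section~\ref{sec_lowerbound}, and it is what the bulk of the proof will be devoted to. Once all proper divisors $d \mid 24$ are excluded, Proposition~\ref{prop_main} follows, and with it Theorem~\ref{thm_main}.
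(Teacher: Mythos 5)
Your reduction of Theorem~\ref{thm_main} to Proposition~\ref{prop_main}, and of that to finitely many divisibility statements about $\Delta^{-d}$, is correct and is essentially the paper's architecture. But the proposal stops exactly where the proof actually lives, and the plan you sketch for executing the remaining cases contains a concrete obstruction you have not noticed. For $d=8$ you propose to pair against classes in $\pi_{172}\MSpin/\MString$; the only classes accessible by the ``easy'' construction are $[D^4,S^3_\Lie]\cdot[M]$ with $[M]\in\pi_{168}\MString$, and by Fact~\ref{fact_Wit_image} the minimal Witten genus there is $24\Delta^{7}$ (since $\gcd(7,24)=1$), so the pairing gives $\tfrac12\cdot k\cdot 24\cdot\tfrac1{12}=k\in\Z$ --- no information. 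The paper's resolution is not to attack $\Delta^{-8}$ in its own degree at all, but to observe that $\Delta^{-8}$ a unit forces $\Delta^{-16}\in R$, and to obstruct \emph{that} in degree $16\times 24$, pairing against $[D^4,S^3_\Lie]\cdot[M]$ with $\Wit_\stri([M])=8\Delta^{15}$ (possible because $\gcd(15,24)=3$), which yields $k/3\in\Z$ (Proposition~\ref{prop_16}). Your case list does not include this move, and without it the $2$-primary part of the bound (the factor $8$ in $\gcd$-terms, i.e.\ ruling out periodicity $192$ or $384$) is not obtained.

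The second, and larger, gap is $d=12$: here too the naive product $[D^4,S^3_\Lie]\cdot[M]$ fails for the same $\gcd$ reason (this is exactly Remark~\ref{rem_12}), and no element of the form (string manifold)$\times[D^4,S^3_\Lie]$ can work. The paper must construct a genuinely new class $[\mathcal{U}^\spin,(L_{25}^\stri)^{\times 3}]\in\pi_{76}\MSpin/\MString$ out of manifold-level Toda brackets built from $\bar\kappa$, $\nu$, $\eta$ and the relation $\nu\bar\kappa=0$ in $\pi_{23}\MString$, prove the bordism identity $24^3[\mathcal{U}^\spin,(L_{25}^\stri)^{\times 3}]=[L_{24}^\stri]^3\cdot[U^\spin_{(S^1)^{\times 3}},(S^1_\Lie)^{\times 3}]$ by an explicit $\langle 2\rangle$-manifold, and deduce $\Wit_\rel=E_2\Delta^3 \bmod \MF_{36}^\Q$ (Lemma~\ref{lem_rel_Wit_76}); only then does multiplying by a string manifold with Witten genus $3\Delta^8$ give $3k/2\in\Z$ and hence $2\mid k$. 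You correctly flag this as ``the technical heart,'' but flagging it is not supplying it: without the specific Toda-bracket classes and their relative Witten genera, the argument does not close, and the degree-by-degree plan as stated (one case per proper divisor of $24$, each handled in degree $24d-20$) would stall at $d=8$ and $d=12$.
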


As explained in the Introduction, we translate the problem into a question on modular form images as follows. 
The upper right horizontal arrow of \eqref{diag_rationalization_SQFT_intro} induces the graded unital ring homomorphism
\begin{align}\label{eq_Phi_main}
    \Phi \colon \pi_{*}\SQFT \to \MF_{*/2} = \Z[c_4, c_6, \Delta, \Delta^{-1}]. 
\end{align}
Here we can take the codomain to be the ring of {\it integral} weakly holomorphic modular forms, since the right square of \eqref{diag_rationalization_SQFT_intro} is commutative. 
Theorem \ref{thm_main} follows from the following Proposition. 
\begin{prop}\label{prop_main}
    The periodicity (possibly $\infty$) of the graded subring
    \begin{align}
        \mathrm{Im}\left( \Phi \colon \pi_{*}\SQFT \to \MF_{*/2}\right) \subset \MF_{*/2}
    \end{align}
    is at least $576$. 
\end{prop}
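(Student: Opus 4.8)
The plan is to reduce the periodicity statement about the graded subring $R := \mathrm{Im}(\Phi) \subset \MF_{*/2} = \bZ[c_4,c_6,\Delta,\Delta^{-1}]$ to a collection of divisibility statements, each of which is established by pairing an element of $\pi_*\SQFT$ against a carefully chosen relative bordism class using the Anderson duality pairing $\langle -,-\rangle_{\SQFT}$ of \eqref{eq_pairing_intro}--\eqref{eq_formula_pairing_intro}. First I would recall that the invertible elements of $\MF_{*/2}$ are exactly $\pm\Delta^d$, $d \in \bZ$, so any periodicity of $R$ is realized by some power $\Delta^d$ lying in $R$, and the set of such $d$ forms a subgroup of $\bZ$; hence the periodicity is $576$ as soon as we show: (i) $\Delta^d \in R$ only if $24 \mid d$ (so $R \cap \bZ[\Delta^{\pm}]$ sits inside $\bZ[\Delta^{\pm 24}]$ — already essentially the content of $\MF_*$), and more importantly (ii) $\Delta^{\pm d} \in R$ forces $24 \mid d$, i.e., the ``small'' powers $\Delta^{\pm 1}, \dots, \Delta^{\pm 23}$ are obstructed, and then (iii) the same argument run with the finer structure obstructs all $d$ with $1 \le d \le 23$ as well as, crucially, handles the factor $24$ coming from the chromatic/torsion data. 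Concretely $576 = 24 \times 24$, and Proposition~\ref{prop_1_intro} already gives the first factor of $24$ (obstructing $\Delta^{-1}$ through $\Delta^{-23}$ would be the same computation with $D^4 \# \dots$, but the clean statement is: $k\Delta^{-1} \in R$ implies $24 \mid k$); I would then need a second, independent factor of $24$.

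Next I would organize the main argument as follows. For each $d$ in a set of residues mod $24$ (equivalently, for each $\Delta^{-d}$ with $1 \le d \le 23$, together with the refinement giving the second $24$), I would exhibit an explicit class $x_d \in \pi_{24d - 20}\MSpin/\MString$ whose relative Witten genus $\Wit_\rel(x_d)$, when plugged into the formula \eqref{eq_formula_pairing_intro}, detects a denominator that $\Phi([\mathcal{T}])$ cannot cancel unless the relevant integrality holds. The key input is that $\pi_*\MSpin/\MString$ in these low degrees is accessible via the long exact sequence relating $\MSpin$, $\MString$ bordism, and the Anderson/ANSS data for $\tmf$; in particular the manifolds are built by the manifold-level Toda bracket construction of Appendix~\ref{app:Toda}, so that elements like $\langle \alpha, \beta, \gamma\rangle$ in the relevant stable homotopy (the classes $\nu$, $\eta$, $\varepsilon$, $\bar\kappa$, etc. that generate the torsion of $\tmf_*$) are realized by honest bordism classes with computable $\Wit_\rel$. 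The plan is: (1) using the AHR/Witten genus, compute $\Wit_\rel$ on these Toda-bracket manifolds modulo $O(q)$ — this is the ``denominator'' bookkeeping, governed by the image of the $J$-homomorphism / Bernoulli-number denominators and by the specific $24$'s appearing in $\tmf_{-24d}$; (2) feed each into \eqref{eq_formula_pairing_intro} and conclude, from $\langle[\mathcal{T}],x_d\rangle_{\SQFT} \in \bZ$, that if $\Delta$-power (or $c_4^a c_6^b \Delta^d$ monomial) times $\Phi([\mathcal{T}])$ had the wrong degree, we reach a contradiction; (3) assemble the divisibilities to conclude the periodicity subgroup of $\bZ$ lies in $576\bZ$, and note it is also $\ge 24$ trivially, so it equals a multiple of $576$ — which is exactly ``at least $576$.''

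The main obstacle, I expect, is step (1)–(2) of the previous paragraph for the residues that require the \emph{second} factor of $24$: obstructing $\Delta^{-d}$ for those $d$ where the naive relative index is already an integer, so that a single closed/bordism class does not suffice and one genuinely needs the secondary (Anderson-dual) nature of the pairing together with a torsion class in $\pi_{24d-20}\MSpin/\MString$ detected by $\tmf$-torsion (e.g. classes mapping to $\bar\kappa$-multiples). Pinning down that such a torsion bordism class exists with the precise $\Wit_\rel \bmod O(q)$ needed — equivalently, reading off the correct Toda bracket and checking the manifold-level construction of Appendix~\ref{app:Toda} produces the claimed invariant — is where the real work lies; the rest is the algebra of $\MF_*$ and the formal properties of the pairing (Construction~\ref{const_pairing_general}, Lemma~\ref{lem_pairing_compatibility}) established above. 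A secondary, more bookkeeping-level difficulty is being careful that we only ever use Assumptions~\ref{ass_string_orientation} and~\ref{ass_21} — in particular that Assumption~\ref{ass_21} ($\pi_{-21}\SQFT = 0$) is exactly what makes $\alpha_\stri$, and hence the $\bZ$-valued (not merely $\bQ$-valued) pairing, well-defined in the degree range we need, so each invocation of integrality must be checked to land in a degree where that vanishing applies (or is propagated by periodicity of the target $I_\bZ\MSpin/\MString$).
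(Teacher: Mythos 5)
Your high-level strategy --- reduce periodicity to divisibility statements and establish each one by pairing against relative bordism classes built from manifold-level Toda brackets --- is indeed the paper's strategy, but the proposal misstates the reduction and explicitly defers the step that constitutes the actual proof. On the reduction: since $\Delta^{24}$ lies in the image of $\Phi$ (Fact~\ref{fact_Wit_image}), the exponents $d$ for which $\pm\Delta^{d}$ is a \emph{unit} of $R:=\mathrm{Im}(\Phi)$ form a subgroup of $\bZ$, and combining a unit $\Delta^{\pm e}$ with $\Delta^{24}\in R$ shows $\Delta^{n}\in R$ for every $n\in\gcd(e,24)\bZ$. Hence one only needs to exclude $\Delta^{-8}$ (equivalently $\Delta^{-16}$) and $\Delta^{-12}$ from $R$, which the paper does via exactly two divisibilities: $k\Delta^{-16}\in R\Rightarrow 3\mid k$ (Proposition~\ref{prop_16}) and $k\Delta^{-12}\in R\Rightarrow 2\mid k$ (Proposition~\ref{prop_lowerbound_12}). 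Your plan to obstruct every $\Delta^{-d}$ for $1\le d\le 23$ is both more than is needed and not achievable for several $d$ by the method you describe: pairing $k\Delta^{-d}$ against $[D^4,S^3_\Lie]\cdot[M]$ with $\Wit_\stri([M])=\tfrac{24}{\gcd(24,d-1)}\Delta^{d-1}$ (the minimal multiple permitted by Fact~\ref{fact_Wit_image}) yields only $\gcd(24,d-1)\mid k$. So your assertion that obstructing $\Delta^{-1}$ through $\Delta^{-23}$ ``would be the same computation'' as Proposition~\ref{prop_1_intro} is false; for $d=12$ one has $\gcd(24,11)=1$ and the computation gives nothing, which is precisely the point of Remark~\ref{rem_12}.

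The genuine gap is therefore the $2$-primary obstruction at $\Delta^{-12}$. You correctly sense that it requires a torsion class tied to $\bar\kappa$, but you leave its construction as ``where the real work lies,'' and that work \emph{is} the proof. One needs the class $[\mathcal{U}^\spin,(L_{25}^\stri)^{\times 3}]\in\pi_{76}\MSpin/\MString$, assembled from $L_{25}^\stri$ realizing $\langle[S^1_\Lie],[S^3_\Lie],[\bar K]\rangle_{\MString/\mathbb{S}}$, the relation $[S^1_\Lie]^3=12[S^3_\Lie]$ in $\pi_3\MString$, and $\langle 24,[S^3_\Lie],[\bar K]\rangle_{\MString/\mathbb{S}}=24\Delta$, together with an explicit manifold-with-corners comparison (Lemmas~\ref{lem_compare_bordism} and~\ref{lem_76_relative_mfd}) proving $24^3\,[\mathcal{U}^\spin,(L_{25}^\stri)^{\times 3}]=[L_{24}^\stri]^3\cdot[U^\spin_{(S^1)^{\times 3}},(S^1_\Lie)^{\times 3}]$ and hence $\Wit_\rel=E_2\Delta^3\bmod\MF^\Q_{36}$; pairing $k\Delta^{-12}$ with the product of this class and a string manifold of Witten genus $3\Delta^{8}$ then gives $\tfrac{3k}{2}\in\bZ$. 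None of this is derivable from the general remarks in your proposal. (A minor further point: your concern about re-verifying Assumption~\ref{ass_21} degree by degree is unfounded --- it is used once, to lift $\alpha_{\KO((q))}$ across $I_\Z\varphi$, and the resulting single map $\alpha_{\spin/\stri}$ induces the integral pairing in all degrees simultaneously.)
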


The rest of this section is devoted to the proof of Proposition \ref{prop_main}. 

\subsection{Relative Witten genera}\label{subsec_relative_Wit}

As a preparation for the proof of Proposition \ref{prop_main}, we discuss the consequence of Assumption \ref{ass_string_orientation}. In particular we introduce the {\it relative Witten genera}, which appear in the formula for the Anderson duality pairings \eqref{eq_formula_pariring_inProp}. 

The commutative diagram \eqref{diag_rationalization_SQFT} induces the following commutative diagram of $\MString$-modules, where the rows are homotopy fiber sequences, 
\begin{align}\label{diag_KO((q))/SQFT}
\xymatrix{
\MString \ar[rrr]^-{\iota} \ar[d]^-{\sigma_\stri}\ar@/_50pt/[ddd]_(.2){\Wit_{\stri}}  &&& \MSpin \ar[rrr]^-{C\iota} \ar[d]^-{\AHR_\spin} \ar@/_50pt/[ddd]_(.2){\Wit_\spin}  &&& \MSpin/\MString \ar[d]^-{\sigma_\rel} \ar@/_90pt/[ddd]_(.2){\Wit_\rel} \\
\SQFT \ar[rrr]^(.3){\Phi} \ar[d]^-{\otimes \Q}\ar@/_30pt/[dd]_(.2){\Phi} &&& \KO((q)) \ar[rrr]^(.3){C\Phi} \ar[d]^-{\otimes \Q} \ar@/_30pt/[dd]_(.2){\Ph}  &&& \KO((q))/\SQFT \ar[d]^-{\otimes \Q}\ar@/_60pt/[dd]_(.3){\Ph/\Phi} \\
\SQFT_\Q \ar[rrr]^(.3){\Phi_\Q} \ar[d]^-{\Phi_\Q} &&& \KO((q))_\Q \ar[rrr]^(.3){C\Phi_\Q} \ar[d]_-{\simeq}^-{\Ph_\Q} &&& (\KO((q))/\SQFT)_\Q \ar[d]^-{(\Ph/\Phi)_\Q} \\
H\MF^\Q \ar@{^{(}-_>}[rrr] &&& \Sigma^{4\Z}H\Q((q)) \ar[rrr]_-{\mathrm{proj}} &&& \Sigma^{4\Z}H\Q((q)) / H\MF^\Q. 
}
\end{align}
We define the morphisms $\Wit_\rel$, $\sigma_\rel$ and $\Ph/\Phi$ as indicated in the diagram. 
The morphism $\Wit_\rel$ induces the map
\begin{align}\label{eq_rel_Wit}
    \Wit_\rel \colon \pi_{d}\MSpin/\MString \to \begin{cases} \frac{\Q((q))}{\MF_{d/2}^\Q} \quad \mbox{if }d \equiv 0 \pmod 4, \\
    0 \quad \mbox{otherwise}
    \end{cases}. 
\end{align}
which we call the {\it relative Witten genus}. 
Let $d \equiv 0 \pmod 4$. By the commutativity of \eqref{diag_KO((q))/SQFT}, the following diagram commutes. 
\begin{align}\label{diag_Wit_rel_and_Wit}
    \xymatrix{
    \pi_d \MSpin \ar[d]_-{C\iota \colon [M]\mapsto [M, \varnothing] } \ar[rr]^-{\Wit_{\spin}} && \Q((q)) \ar[d]^-{\mathrm{proj}}\\
\pi_d \MSpin/\MString \ar[rr]_-{\Wit_\rel} &&\frac{\Q((q))}{\MF_{d/2}^\Q} . 
    }
\end{align}

\begin{ex}[{The element $[D^4, S^3_\Lie] \in \pi_{4}\MSpin/\MString$ and its relative Witten genus}]\label{ex_D4S3}
Let us consider $d=4$ case. The homotopy fiber exact sequence for bordism groups looks like
\begin{align}\label{diag_MSpin/MString_deg4}
    \xymatrix{
    \pi_4 \MString \ar[r]  & \pi_4 \MSpin \ar[r] &  \pi_4\MSpin/\MString \ar[r]^-{\del} & \pi_3 \MString \ar[r] & \pi_3 \MSpin \\
    0 \ar[r] \ar[u]^-{\simeq}& \Z \ar[r]^-{\times 24} \ar[u]^-{\simeq}_{\cdot[K3]}& \Z \ar[r]^-{\mod 24}  \ar[u]^-{\simeq}_{\cdot [D^4, S^3_\Lie]} & \Z/24 \ar[r] \ar[u]^-{\simeq}_{\cdot [S^3_\Lie]} & 0\ar[u]^-{\simeq}  .
    }
\end{align}
this fact is well-known, for example see \cite[Example D.6]{Tachikawayamashita2021}. 
As explained there, element $[D^4, S^3_\Lie] \in \pi_4 \MSpin/\MString$ consists of four dimensional disk $D^4$ equipped with the trivial spin structure, and on the boundary $\del D^4 \simeq S^3$ the spin structure is lifted to the string structure coming from the Lie group framing of $S^3 \simeq SU(2)$. 
We can construct a framing outside $24$ points on the $K3$ surface so that we get the relations
\begin{align}\label{eq_D4S3_generates}
    \del([D^4, S^3_\Lie]) = [S^3_\Lie], \quad \mbox{and} \quad [K3] = 24[D^4, S^3_\Lie], 
\end{align}
which verifies the diagram \eqref{diag_MSpin/MString_deg4}. 

The relative Witten genus of the element $[D^4, S^3_\Lie]\in \pi_4\MSpin/\MString$ is computed as follows. 
 \begin{lem}\label{lem_Ind_rel_D4S3}
        We have the following equality in $\frac{\Q((q))}{\MF_{2}}$, 
        \begin{align}
            \Wit_{rel}([D^4, S^3_\Lie]) = \frac{E_2(q)}{12} \mod \MF^\Q_2, 
        \end{align}
        where $E_2(q)$ is the second Eisenstein series with the normalization so that $E_2(q) = 1 + O(q)$. 
    \end{lem}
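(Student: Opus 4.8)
The plan is to reduce the computation to the ordinary spin Witten genus of a \emph{closed} $4$-manifold. Since the $K3$ surface generates $\pi_4\MSpin$ and, by \eqref{eq_D4S3_generates}, its image under $C\iota\colon\pi_4\MSpin\to\pi_4\MSpin/\MString$ is $24\,[D^4,S^3_\Lie]$, the commutative square \eqref{diag_Wit_rel_and_Wit} (with $d=4$) together with additivity of $\Wit_\rel$ gives $24\,\Wit_\rel([D^4,S^3_\Lie]) = \Wit_\rel(C\iota([K3])) = \mathrm{proj}(\Wit_\spin([K3]))$ in $\frac{\Q((q))}{\MF_2^\Q}$. Because this target is a $\Q$-vector space, at the very end we may simply divide by $24$; so the whole problem reduces to computing $\Wit_\spin([K3])=\Ph(\AHR_\spin([K3]))\in\Q((q))$.

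Next I would carry out that computation. For any closed spin $4$-manifold $M$, pairing the degree-$4$ part of the Witten characteristic series with $[M]$ — and using $\sum_{n\ge1}\sigma_1(n)q^n=\frac{1-E_2(q)}{24}$ — yields $\Wit_\spin(M)=-\frac{p_1[M]}{24}E_2(q)=\hat A(M)\cdot E_2(q)$, a holomorphic $q$-series. (Equivalently: $\Wit_\spin(M)$ is a weight-$2$ quasimodular form for $\mathrm{SL}(2,\Z)$, hence a scalar multiple of $E_2(q)$, and its constant term is the $\hat A$-genus, since $\AHR_\spin$ reduces to the ABS orientation at $q=0$.) For $M=K3$ one has $\hat A(K3)=2$ — equivalently $p_1[K3]=3\,\sigma(K3)=-48$ — so $\Wit_\spin([K3])=2E_2(q)$. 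Substituting into the identity from the previous paragraph and dividing by $24$ gives $\Wit_\rel([D^4,S^3_\Lie])=\frac{E_2(q)}{12}\bmod\MF_2^\Q$, as claimed.

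The step I expect to be the main obstacle is pinning down the normalization in this computation: verifying that the weight-$2$ coefficient of the Witten characteristic series is precisely $-E_2(q)/24$ — the appearance of the non-modular Eisenstein series $E_2$ (rather than an honest modular form) being exactly the manifestation of $K3$ being spin but not string, since $p_1(K3)\ne0$ — and checking that this is consistent with $\hat A(K3)=2$ via the signature theorem in dimension $4$. Once $\Wit_\spin([K3])=2E_2(q)$ is in hand, the remaining steps are purely formal: chasing the commutative diagram \eqref{diag_Wit_rel_and_Wit} and performing a single division by $24$ in a rational vector space.
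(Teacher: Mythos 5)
Your proof is correct and follows the same route as the paper: reduce via the commutative square \eqref{diag_Wit_rel_and_Wit} and the relation $[K3] = 24[D^4,S^3_\Lie]$ from \eqref{eq_D4S3_generates}, then divide by $24$ in the $\Q$-vector space $\Q((q))/\MF_2^\Q$. The only difference is that you additionally verify $\Wit_\spin([K3]) = 2E_2(q)$ from the characteristic series and $\hat A(K3)=2$, whereas the paper simply cites this as known.
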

    \begin{proof}
        We use the commutativity of \eqref{diag_Wit_rel_and_Wit} and the knowledge of the usual Witten genus for the $K3$-surface, 
        \begin{align}
            \Wit_\spin([K3])= 2E_2(q). 
        \end{align}
        We have
        \begin{align}
            \Wit_\rel([D^4, S^3_\Lie]) &= \frac{1}{24} \cdot \Ind_{\rel}(24[D^4, S^3_\Lie])\\
            &= \frac{1}{24} \cdot \Wit_{\spin}([K3]) \quad (\rm{mod} \  \MF^\Q_2) \\
            &= \frac{2E_2(q)}{24} \quad (\rm{mod} \ \MF^\Q_2).  
        \end{align}
        Here, for the second equality we used \eqref{eq_D4S3_generates}. 
    \end{proof}
\end{ex}

We use the following product formula for the relative Witten genus. 
 Recall that $\MSpin/\MString$ is canonically an $\MString$-module. 
 \begin{lem}\label{lem_product_Wit}
     For any $[L] \in \pi_{d_1}\MString$ and $[M, N] \in \pi_{d_2}\MSpin/\MString$, we have
     \begin{align}
         \Wit_\rel([L] \cdot [M, N]) = \Wit([L]) \cdot \Wit_\rel([M, N]). 
     \end{align}
     In particular, if $d_1$ or $d_2$ is not divisible by $4$, we have $\Wit([L] \cdot [M, N]) = 0$. 
 \end{lem}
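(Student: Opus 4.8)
The plan is to recognize $\Wit_\rel$ as a morphism of $\MString$-modules and then to make the induced action of $\pi_*\MString$ on its target completely explicit; once that is done the product formula is a formality, and the last sentence follows from a degree count.

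First I would record that every spectrum in \eqref{diag_rationalization_SQFT} is naturally an $\MString$-algebra: $\MSpin$ via $\iota$, $\SQFT$ via $\sigma_\stri$, $\KO((q))$ via $\AHR_\spin\circ\iota$, and $H\MF^\Q$ via $\Wit_\stri=\Phi\circ\sigma_\stri$. By commutativity of \eqref{diag_rationalization_SQFT}, every map there is a map of $\MString$-algebras---in particular the right-hand vertical inclusion $H\MF^\Q\hookrightarrow\Sigma^{4\Z}H\Q((q))$, which realizes the ring homomorphism given by $q$-expansion of modular forms. Taking cofibers of the three horizontal maps and rationalizing then produces \eqref{diag_KO((q))/SQFT} as a diagram of $\MString$-modules, exactly as asserted when it is introduced; hence the composite defining $\Wit_\rel$,
\begin{align}
\MSpin/\MString\xrightarrow{\sigma_\rel}\KO((q))/\SQFT\xrightarrow{\Ph/\Phi}\Sigma^{4\Z}H\Q((q))/H\MF^\Q,
\end{align}
is a map of $\MString$-modules.

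Next I would read off the consequence on homotopy groups. For $[L]\in\pi_{d_1}\MString$ and $[M,N]\in\pi_{d_2}\MSpin/\MString$, $\MString$-linearity of $\Wit_\rel$ gives
\begin{align}
\Wit_\rel([L]\cdot[M,N])=[L]\cdot\Wit_\rel([M,N]),
\end{align}
where $[L]$ acts through the $\MString$-module structure on $\Sigma^{4\Z}H\Q((q))/H\MF^\Q$. The one point that needs care is the identification of this action. In \eqref{diag_KO((q))/SQFT} the $\MString$-module structures on both $H\MF^\Q$ and $\Sigma^{4\Z}H\Q((q))$ factor through $\Wit_\stri\colon\MString\to H\MF^\Q$ (using commutativity of the right-hand square of \eqref{diag_rationalization_SQFT}), hence so does the structure on the cofiber $\Sigma^{4\Z}H\Q((q))/H\MF^\Q$; thus $[L]$ acts by multiplication by the weakly holomorphic modular form $\Wit_\stri([L])=\Wit([L])\in\MF^\Q_{d_1/2}$, through the evident $H\MF^\Q$-module structure on that cofiber. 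Under the identification $\pi_d(\Sigma^{4\Z}H\Q((q))/H\MF^\Q)\simeq\Q((q))/\MF^\Q_{d/2}$ used in \eqref{eq_rel_Wit}, this is precisely multiplication by the $q$-expansion of $\Wit([L])$, which is well defined on the quotient because $\MF^\Q_{d_1/2}\cdot\MF^\Q_{d_2/2}\subseteq\MF^\Q_{(d_1+d_2)/2}$ (a product of modular forms is a modular form). This yields the asserted formula $\Wit_\rel([L]\cdot[M,N])=\Wit([L])\cdot\Wit_\rel([M,N])$.

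Finally, for the vanishing statement: if $4\nmid d_2$ then $\Wit_\rel([M,N])=0$ already by \eqref{eq_rel_Wit}; and if $4\nmid d_1$ then $\Wit([L])=\Wit_\stri([L])$ lies in $\pi_{d_1}H\MF^\Q=\MF^\Q_{d_1/2}$, which vanishes since there are no nonzero weakly holomorphic modular forms of odd weight for $\mathrm{SL}(2,\Z)$ (equivalently, $\pi_*H\MF^\Q$ is concentrated in degrees divisible by $4$). In either case the right-hand side, hence $\Wit_\rel([L]\cdot[M,N])$, is $0$. I expect the only real work to be the bookkeeping underlying the identification in the previous paragraph---organizing \eqref{diag_rationalization_SQFT} as a diagram of $\MString$-algebras and verifying that the resulting abstract $\pi_*\MString$-action on $\Sigma^{4\Z}H\Q((q))/H\MF^\Q$ really is multiplication by the $q$-expansion of the Witten genus; everything else is formal, modulo the elementary fact that modular forms multiply.
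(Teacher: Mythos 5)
Your proof is correct and follows the same approach the paper takes — the paper's proof is precisely the one-liner that \eqref{diag_KO((q))/SQFT} is a commutative diagram of $\MString$-modules, and you have merely unwound the formal consequences of that (tracing the $\MString$-module structure on the bottom-right cofiber through $\Wit_\stri$, and handling the degree constraints) in more detail.
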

 \begin{proof}
     Directly follows from the fact that \eqref{diag_KO((q))/SQFT} is a commutative diagram of $\MString$-modules.  
 \end{proof}

\subsection{Review of the secondary pairing in \cite{tachikawa2023anderson}}\label{subsec_pairing}
In \cite{tachikawa2023anderson}, they considered a morphism $\TMF \to \Sigma^{-20}\MSpin/\MString$ which arises as the secondary morphism of the anomaly cancellation in heterotic string theory, and investigated into the pairing between homotopy groups of $\TMF$ and $\MSpin/\MString$. 
Our key observation here is that their construction only uses the fact that the spectrum $\TMF$ satisfies the assumptions listed in Subsection \ref{subsec_assumptions}. 
In this subsection, we reproduce the construction of the secondary $\Z$-valued pairing in \cite{tachikawa2023anderson}, with ``$\TMF$'' replaced to ``$\SQFT$''. 

Let $\SQFT$ be a spectrum satisfying the assumptions in Subsection \ref{subsec_assumptions}. 
In this subsection, we construct a morphism (Definition \ref{def_alpha_spin/string})
\begin{align}
    \alpha_{\spin/\stri} \colon \SQFT \to \Sigma^{-20}I_\Z \MSpin/\MString,  
\end{align}
which induces a pairing (Construction \ref{const_pairing_general})
\begin{equation}
\langle-,-\rangle_\SQFT \colon \pi_{-4k}\SQFT \times \pi_{4k-20}\MSpin/\MString\to \bZ,
\label{geompair}
\end{equation}
Then we show in Proposition \ref{prop_formula_pairing} that we have the following formula for the pairing, 
\begin{align}\label{eq_formula_pariring}
    \langle \mathcal{T}, [M, N]\rangle_\SQFT
    =\left. \frac{1}{2}\Delta\cdot \Phi(\mathcal{T}) \cdot \Wit_\rel([M, N])\right|_{q^0}. 
\end{align}
Here $\Wit([M, N]) \in \frac{\Q((q))}{\MF_{2k-10}}$ is the relative Witten genus introduced in \eqref{eq_rel_Wit}. 
So the important point is the well-definedness, including the integrality, of the formula \eqref{eq_formula_pariring}. 

\begin{defn}[{\cite[Definition 3.8]{tachikawa2023anderson}}]\label{def_alpha_KO}
    We define an element $\alpha_{\KO((q))} \in \pi_{20}I_\Z \KO((q)) \simeq \Hom(\pi_{-20}\KO((q)), \Z)$ by the formula
\begin{align}
    \alpha_{\KO((q))} \colon \pi_{-20}\KO((q)) \simeq 2\Z((q)) \to \Z, \\
    \phi \mapsto \frac{1}{2}\Delta \phi|_{q^0}. \notag
\end{align}
\end{defn}

The following simple fact about modular forms is a key ingredient in the construction of the pairing. 
\begin{fact}[{\cite[Lemma 3.14]{Tachikawayamashita2021}}]\label{fact_MF-2}
    The constant coefficient (i.e., the coefficient of $q^0$) of any element $\phi(q) \in \MF_{-2} \subset \Z((q))$ is zero. 
\end{fact}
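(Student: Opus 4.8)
\textbf{Proof proposal for Fact~\ref{fact_MF-2}.}

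The plan is to use the dimension formula for the spaces of holomorphic modular forms together with the structure of $\MF_*$ as a ring. First I would recall that $\MF_{-2}$ means modular forms of weight $-1$ in the classical normalization (since our grading is twice the weight), and that any weakly holomorphic modular form of negative weight with a nonzero $q^0$-coefficient would, after suitable manipulation, contradict the fact that there are no nonzero holomorphic modular forms of negative weight. More precisely, suppose $\phi(q)\in\MF_{-2}$ has nonzero constant term; write $\phi = \sum_{n\geq -N} a_n q^n$ with $a_0\neq 0$. The key step is to multiply $\phi$ by a suitable power of $\Delta$ to clear the poles: $\Delta^N\phi\in\MF_{12N-2}$ is a \emph{holomorphic} modular form of weight $6N-1$, which is odd, hence zero since there are no nonzero modular forms of odd weight for $\mathrm{SL}(2,\Z)$ (the element $-I$ acts by $(-1)^{\text{weight}}$). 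But $\Delta^N\phi$ having $a_0\neq 0$ as the coefficient of $q^N$ — actually I should be more careful: $\Delta = q + O(q^2)$, so $\Delta^N\phi = a_0 q^N + \cdots \neq 0$, a contradiction.

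Alternatively, and perhaps more cleanly, I would argue directly: every element of $\MF_*$ is a $\Z$-linear combination of monomials $c_4^a c_6^b \Delta^c$ with $4a+6b+12c = $ degree$/1$ in the classical weight... let me restate in the paper's grading: monomials $c_4^a c_6^b\Delta^c$ have degree $4a+6b+12c$, and one checks $4a+6b+12c = -2$ has no solution with $a,b\geq 0$, $c\in\Z$ — reducing mod $2$ is no obstruction, but reducing the equation $2a+3b+6c=-1$ mod... hmm, $2a+3b+6c=-1$ forces $3b$ odd so $b$ odd, $b\geq 1$; but then one still needs to rule out solutions, so this monomial-counting alone does not immediately work because of the negative powers of $\Delta$. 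So the holomorphicity/odd-weight argument above is the one to run.

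\textbf{The main obstacle.} The genuine content is packaging the vanishing of odd-weight modular forms (equivalently, that $-I\in\mathrm{SL}(2,\Z)$ acts by $(-1)^k$ on weight-$k$ forms) and combining it with the observation that multiplying by $\Delta^N$ both clears the pole and shifts the weight by the even number $12N$, preserving parity. I expect the only subtlety is bookkeeping: ensuring that $\Delta^N\phi$ is genuinely holomorphic (no pole at the cusp) and genuinely a modular form for the full modular group, so that the classical vanishing applies. Once that is set up, the contradiction with $a_0\neq 0$ is immediate, since the leading $q$-expansion coefficient of $\Delta^N\phi$ is exactly $a_0$ up to a unit.
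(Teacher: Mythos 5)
Your reading of the grading is off. In the paper's explicit conventions, $\MF_*$ is graded by modular \emph{weight}, with $\deg c_4 = 4$, $\deg c_6 = 6$, $\deg \Delta = 12$; the ``grading twice that of the modular forms'' in Assumption~\ref{ass_string_orientation} is applied only to the Eilenberg--MacLane spectrum $H\MF^\Q$, not to the ring $\MF_*$ itself. So $\MF_{-2}$ is weight $-2$, not weight $-1$, and this even weight is fatal to your argument: for $\phi$ of weight $-2$ with a pole of order $N$ at the cusp, $\Delta^N\phi$ is a holomorphic modular form of weight $12N-2$, which is even and positive for $N\geq 1$, and there are plenty of nonzero such forms. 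Concretely $\phi = c_4c_6\Delta^{-1}\in\MF_{-2}$ has $\Delta\phi = E_{10}\neq 0$; its $q$-expansion is $q^{-1} - 240 + O(q)$, whose constant term is nonzero. This shows the literal weight-$(-2)$ statement cannot be what is meant; what the proof of Proposition~\ref{prop_vanish_anomaly} actually invokes is the vanishing of $\left.\Delta\phi\right|_{q^0}$ for $\phi\in\MF_{-10}^\Q$, i.e.\ the vanishing of the constant term of a weight-$2$ weakly holomorphic form, and that is the content of the cited TY lemma. Either way the relevant weight is even, so the odd-weight vanishing you rely on never enters.

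The true statement --- that a weakly holomorphic modular form $\psi$ of weight $2$ has vanishing constant term --- needs a genuinely different ingredient, namely the residue theorem. Since $dz$ transforms with weight $-2$, the $1$-form $\psi(z)\,dz$ is $\mathrm{SL}(2,\Z)$-invariant and holomorphic on $\mathbb{H}$, hence descends to a meromorphic $1$-form on $X(1)\cong\mathbb{P}^1$ whose only pole is at the cusp. In the coordinate $q$ there one has $\psi\,dz = \frac{1}{2\pi i}\sum_n a_n q^{n-1}\,dq$, so the residue at the cusp is $a_0/(2\pi i)$, and the residue theorem forces $a_0 = 0$. Equivalently, integrate $\psi\,dz$ around the boundary of a truncated fundamental domain: the vertical edges cancel by $\psi(z+1)=\psi(z)$, the two halves of the unit arc cancel by $\psi(-1/z)\,d(-1/z)=\psi(z)\,dz$, and the top horizontal segment contributes $-a_0$. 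This is the actual content of the lemma, and there is no parity shortcut of the kind you propose.
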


The following is the main theorem of \cite{Tachikawayamashita2021}, with their ``$\TMF$'' replaced by ``$\SQFT$''. 
\begin{prop}[{$\SQFT$-version of \cite[Theorem 3.17]{Tachikawayamashita2021}, \cite[Theorem 3.9]{tachikawa2023anderson}}]\label{prop_vanish_anomaly}
Let $I_\Z \varphi \colon I_\Z \KO((q)) \to I_\Z \SQFT$ denote the Anderson dual to the middle vertical arrow in \eqref{diag_rationalization_SQFT}. 
    We have the following equality in $\pi_{20}I_\Z \SQFT$. 
    \begin{align}
        I_\Z \varphi (\alpha_{\KO((q))}) = 0. 
    \end{align}
\end{prop}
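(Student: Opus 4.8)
The statement to prove is $I_\Z\varphi(\alpha_{\KO((q))}) = 0$ in $\pi_{20}I_\Z\SQFT$. The strategy is to mimic the proof of Proposition \ref{prop_vanish_anomaly_SQM} in the warm-up, which is the $\SQM$/$\KO$-analogue. First I would use Assumption \ref{ass_21}, i.e., $\pi_{-21}\SQFT = 0$, together with the universal coefficient short exact sequence for Anderson duality
\begin{align}
    0 \to \Ext(\pi_{-21}\SQFT, \Z) \to \pi_{20}I_\Z\SQFT \to \Hom(\pi_{-20}\SQFT, \Z) \to 0
\end{align}
to conclude that the $\Ext$-term vanishes, so that $\pi_{20}I_\Z\SQFT \simeq \Hom(\pi_{-20}\SQFT, \Z)$ canonically. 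Thus it suffices to show that the homomorphism $\pi_{-20}\SQFT \to \Z$ corresponding to $I_\Z\varphi(\alpha_{\KO((q))})$ is the zero map.

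Next I would identify that homomorphism explicitly. By naturality of Anderson duality, $I_\Z\varphi(\alpha_{\KO((q))})$ as an element of $\Hom(\pi_{-20}\SQFT, \Z)$ is the composite $\pi_{-20}\SQFT \xrightarrow{\varphi} \pi_{-20}\KO((q)) \xrightarrow{\alpha_{\KO((q))}} \Z$, where the second map is $\phi \mapsto \frac{1}{2}\Delta\phi|_{q^0}$ by Definition \ref{def_alpha_KO}. So I need to show: for every $\mathcal{T} \in \pi_{-20}\SQFT$, the element $\varphi(\mathcal{T}) \in \pi_{-20}\KO((q)) \simeq 2\Z((q))$ has $\frac{1}{2}\Delta\cdot\varphi(\mathcal{T})|_{q^0} = 0$. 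The key is that the right square of \eqref{diag_rationalization_SQFT} commutes, so $\Ph(\varphi(\mathcal{T}))$ equals the image of $\Phi(\mathcal{T}) \in \pi_{-20}H\MF^\Q = \MF^\Q_{-10}$ under the inclusion $\MF^\Q_{-10} \hookrightarrow \Q((q))$. Since $\Ph$ is rationally an isomorphism (identifying $\pi_{-20}\KO((q))\otimes\Q$ with $\Q((q))$), this means $\varphi(\mathcal{T})$, viewed in $\Q((q))$, actually lies in the subspace $\MF^\Q_{-10}$. Therefore $\Delta\cdot\varphi(\mathcal{T})$ lies in $\MF^\Q_{-10+12} = \MF^\Q_{2}$ — wait, I must be careful with the grading convention: with $\deg\Delta = 12$ on modular forms and the degree-doubling convention $\pi_{2k}H\MF^\Q = \MF^\Q_k$, multiplication by $\Delta$ sends $\MF^\Q_{-10}$ to $\MF^\Q_{-10+12}$; but the relevant point for Fact \ref{fact_MF-2} is that we land in weight $-2$, so I should instead note that $\Phi(\mathcal{T}) \in \MF_{-10}$ (integral, by the commutativity of the right square as in \eqref{eq_Phi_main}) and hence $\Delta\cdot\Phi(\mathcal{T}) \in \MF_{2}$, but $\alpha_{\KO((q))}$ pairs against $\pi_{-20}$ which corresponds to weight $-10$... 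Let me re-examine: the degree $20$ shift means we are looking at $\phi$ of weight $-10$, $\Delta\phi$ of weight $2$. Then Fact \ref{fact_MF-2} is about weight $-2$, which is off by the Bott-periodicity bookkeeping in $\KO((q))$; the correct statement is that $\Delta\cdot\varphi(\mathcal{T})$ lands in $\MF_2$ where weakly holomorphic modular forms of weight $2$ with no pole... actually a weight-$2$ weakly holomorphic modular form is a multiple of $E_2$-type object, but the genuine weight-$2$ space $\MF_2 = 0$ — indeed $\MF_2^\Q$ vanishes since there is no weight-$2$ holomorphic modular form and $\Delta^{-1}\cdot(\text{weight }14)$... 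I would verify directly that $\MF_{-10} = 0$ already, or more robustly invoke Fact \ref{fact_MF-2} in the form needed. The cleanest route: since $\Phi(\mathcal{T}) \in \MF_{-10}$ and one checks $\MF_{-10} = 0$ (no weakly holomorphic modular form of weight $-10$: writing $\Delta^{-1}f$ with $f$ of weight $2$, and $\MF_2 = 0$), we get $\varphi(\mathcal{T})$ is rationally zero, hence $\alpha_{\KO((q))}(\varphi(\mathcal{T})) = 0$. Alternatively — and this is what \cite{Tachikawayamashita2021} does — use that $\Delta\cdot\varphi(\mathcal{T})$ lies in the integral weakly holomorphic modular forms of weight $2$ and apply Fact \ref{fact_MF-2}-type vanishing of the $q^0$ coefficient.

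The main obstacle is the bookkeeping: correctly tracking the degree-doubling convention between $\pi_*H\MF^\Q$ and modular-form weights, the Bott-periodicity identification $\pi_{-20}\KO((q)) \simeq 2\Z((q))$, and ensuring that the relevant space of modular forms is exactly the one where the $q^0$-coefficient vanishes (Fact \ref{fact_MF-2}) or is zero outright. Once the diagram chase through the commutative right square of \eqref{diag_rationalization_SQFT} is set up cleanly, the vanishing is immediate; the content is entirely in the modular-forms fact and in the use of Assumption \ref{ass_21} to pass from $I_\Z\SQFT$ to a $\Hom$-group. I would present it as a commutative diagram
\begin{align}
    \xymatrix{
    \pi_{-20}\SQFT \ar@/^20pt/[rrr]^-{I_\Z\varphi(\alpha_{\KO((q))})} \ar[r]^-{\varphi} \ar[d]^-{\Phi} & \pi_{-20}\KO((q)) \ar[rr]^-{\alpha_{\KO((q))}} \ar[d]^-{\Ph} && \Z \ar@{_{(}-_>}[d] \\
    \MF^\Q_{-10} \ar[r]^-{\subset} & \Q((q)) \ar[rr]_-{\phi\mapsto\frac12\Delta\phi|_{q^0}} && \Q
    }
\end{align}
with commutativity of the left square coming from \eqref{diag_rationalization_SQFT} and the vanishing of the bottom composite coming from Fact \ref{fact_MF-2} (applied after multiplying by $\Delta$, landing in weight $2$, equivalently weight $-2$ after the appropriate normalization), exactly parallel to Proposition \ref{prop_vanish_anomaly_SQM}.
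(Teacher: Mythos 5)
Your final diagram and argument match the paper's proof exactly: use Assumption~\ref{ass_21} to obtain the canonical identification $\pi_{20}I_\Z\SQFT \simeq \Hom(\pi_{-20}\SQFT, \Z)$, chase the right square of \eqref{diag_rationalization_SQFT} to see that $\alpha_{\KO((q))}\circ\varphi$ factors through $\MF^\Q_{-10}\hookrightarrow\Q((q))$, and conclude by Fact~\ref{fact_MF-2}. However, the side-route you propose as the ``cleanest'' — that $\MF_{-10}=0$ and $\MF_2=0$ — is false: $c_4^2 c_6 \Delta^{-2}$ is a nonzero element of $\MF_{-10}$, and $c_4^2 c_6 \Delta^{-1}$ is a nonzero element of $\MF_2$. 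Remember that these are spaces of \emph{weakly holomorphic} modular forms, so negative powers of $\Delta$ are allowed, and no weight is forced to vanish. The vanishing you actually need is a genuine statement about $q$-expansions, not about a space being zero: multiplying by $\Delta$ sends $\MF^\Q_{-10}$ into weight $2$, and the $q^0$ coefficient of a weight-$2$ weakly holomorphic form vanishes by the residue theorem (this is the content of Fact~\ref{fact_MF-2}, applied to $\Delta\cdot\Phi(\mathcal{T})$). Your hedge that weight $2$ is ``equivalently weight $-2$ after the appropriate normalization'' is not a real equivalence and should be dropped; just multiply by $\Delta$ and cite the fact. With the false ``$\MF=0$'' shortcuts excised, the remainder of your argument is correct and coincides with the paper's.
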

\begin{proof}
    Since $\pi_{-21}\SQFT = 0$ by Assumption \ref{ass_21}, we have a canonical isomorphism
    \begin{align}
        \pi_{20}I_\Z \SQFT \simeq \Hom(\pi_{-20}\SQFT, \Z). 
    \end{align}
    The element $I_\Z \varphi (\alpha_{\KO((q))})$ is regarded as a homomorphism via the above isomorphism, fits into the following commutative diagram. 
    \begin{align}
    \xymatrix{
    \pi_{-20}\SQFT \ar@/^30pt/[rrr]^-{I_\Z \varphi(\alpha_{\KO((q))})} \ar[r]^-{\varphi} \ar[d]^-{\Phi} & \pi_{-20} \KO((q)) \ar[rr]^-{\alpha_{\KO((q))}} \ar[d]^-{\Ph} && \Z \ar@{_{(}-_>}[d] \\
    \MF_{-10}^\Q \ar@{^{(}-_>}[r] & \Q((q)) \ar[rr]_-{\frac{1}{2}\Delta \cdot -|_{q^0}} && \Q
    }
    \end{align}
    Here the commutativity of the left square comes from Assumption \ref{ass_string_orientation}. 
    By Fact \ref{fact_MF-2}, the composition of bottom rows are zero. By the commutativity of the diagram, we get the desired result.  
\end{proof}

We now use the homotopy fiber sequence
\begin{align}
    I_\Z \KO((q))/\SQFT \xrightarrow{I_\Z C\varphi} I_\Z \KO((q)) \xrightarrow{I_\Z \varphi} I_\Z \SQFT. 
\end{align}
\begin{defn}[{$\alpha_{\spin/\stri}$}]\label{def_alpha_spin/string}
    By Proposition \ref{prop_vanish_anomaly}, we can choose a lift of $\alpha_{\KO((q))} \in \pi_{20}I_\Z \KO((q))$ to an element in $\pi_{20}I_\Z \KO((q))/\SQFT$, i.e., an element $\alpha_{\KO((q))/\SQFT} \in \pi_{20}I_\Z \KO((q))/\SQFT$ so that
    \begin{align}\label{eq_alpha_KO((q))/SQFT}
         I_\Z C\varphi(\alpha_{\KO((q))/\SQFT})=\alpha_{\KO((q))}. 
    \end{align}
    Choose any lift, and denote by 
    \begin{align}
        \alpha_{\KO((q))/\SQFT} \colon \SQFT \to \Sigma^{-20}I_\Z \KO((q))/\SQFT
    \end{align}
    the $\SQFT$-module map given by the multiplication of the chosen element.
    We define $\alpha_{\spin/\stri}$ to be the following composition, 
    \begin{align}\label{eq_def_alpha_spin/stri}
        \alpha_{\spin/\stri} \colon \SQFT \xrightarrow{\alpha_{\KO((q))/\SQFT} }\Sigma^{-20}I_\Z \KO((q))/\SQFT \xrightarrow{I_\Z \sigma_\rel} \Sigma^{-20}I_\Z \MSpin/\MString. 
    \end{align}
\end{defn}

\begin{rem}\label{rem_alpha_rel}
    The analogous remark to Remark \ref{rem_gamma_K/SQM} applies here, and we can conveniently understand the definition of the element $\alpha_{\KO((q))/\SQFT} \in \pi_{20}I_\Z \KO((q))/\SQFT$ as follows. 
    We have the exact sequence
    \begin{align}
        0 \to \Ext(\pi_{-21}\KO((q))/\SQFT) \to \pi_{20}I_\Z \KO((q))/\SQFT \xrightarrow[]{p} \Hom(\pi_{-20}\KO((q))/\SQFT, \Z) \to 0. 
    \end{align}
    By exactly the same argument as Remark \ref{rem_gamma_K/SQM}, for an element $\alpha_{\KO((q))/\SQFT} \in \pi_{20}I_\Z \KO((q))/\SQFT$, the following two conditions are equivalent. 
    \begin{enumerate}
        \item We have \eqref{eq_alpha_KO((q))/SQFT}. 
        \item Its image $p(\alpha_{\KO((q))/\SQFT})$ in $\Hom(\pi_{-20}\KO((q))/\SQFT, \Z)$ fits into the following commutative diagram. 
       \begin{align}
           \xymatrix{
           \pi_{-20}\KO((q)) \ar[r]^-{C\psi} \ar[rd]_-{\left. \frac{1}{2} \Delta \cdot (-)\right|_{q^0}} & \pi_{-20}\KO((q))/\SQFT  \ar[d]^-{p(\alpha_{\KO((q))/\SQFT})} \\
           & \Z. 
           }
       \end{align}
    \end{enumerate}
    In particular we have the ambiguity of the choice of the lift, which is a torsor over $\Ext(\pi_{-21}\KO((q))/\SQFT, \Z)$. 
   As we will see, since we are only interested in the $\Z$-valued homomorphism part in the following discussion, this ambiguity is irrelevant for us. 
\end{rem}

Applying Construction \ref{const_pairing_general} to $\alpha=\alpha_{\KO((q))/\SQFT} \colon \SQFT \to \Sigma^{-20}I_\Z \KO((q))/\SQFT$. 
We get a $\Z$-valued pairing
\begin{align}\label{eq_pairing_KO((q))/SQFT}
    \langle -, - \rangle_{\alpha_{\KO((q))/\SQFT}} \colon \pi_{-d}\SQFT \times \pi_{d-20}\KO((q))/\SQFT \to \Z. 
\end{align}

\begin{lem}[{The formula for $\langle -, - \rangle_{\alpha_{\KO((q))/\SQFT}}$}]\label{lem_formula_pairing_KO((q))/SQFT}
Let $d$ be an integer such that $d \equiv 0 \pmod 4$. 
In the following diagram
\begin{align}\label{diag_pairing_KO/SQFT}
    \xymatrix{
    \pi_{-d}\SQFT \ar[d]_{\Phi}^-{\eqref{diag_KO((q))/SQFT}} & \times&\pi_{d-20}\KO((q))/\SQFT\ar[d]_-{\Ph/\Phi}^-{\eqref{diag_KO((q))/SQFT}} \ar[rrr]^-{\langle -, - \rangle_{\alpha_{\KO((q))/\SQFT}}} &&& \Z \ar@{_{(}-_>}[d]\\
     \pi_{-d}H\MF^\Q =\MF^\Q_{-d/2} & \times & \pi_{d-20}\Sigma^{4\Z}H\Q((q))/H\MF^\Q \simeq \frac{\Q((q))}{\MF^\Q_{(d-20)/2}} \quad
     \ar[rrr]^-{\left. \frac{1}{2}\Delta \cdot (-) \cdot (-) \right|_{q^0}}_-{\rm{Fact }\ \ref{fact_MF-2}}  &&& \Q 
     }, 
\end{align}
the two pairings are compatible, i.e., we have
    \begin{align}\label{eq_formula_pairing_KO/SQFT}
        \langle \mathcal{T}, \mathcal{R} \rangle_{\alpha_{\KO((q))/\SQFT}} =\left. \frac{1}{2}\Delta \cdot \Phi(\mathcal{T}) \cdot (\Ph/\Phi)(\mathcal{R}) \right|_{q^0}. 
    \end{align}
    for all $\mathcal{T} \in \pi_{-d}\SQFT$ and $\mathcal{R} \in \pi_{d-20}\KO((q))/\SQFT$. 
    Here, the well-definedness of the pairing in the bottom row of \eqref{diag_pairing_KO/SQFT} is guaranteed by Fact \ref{fact_MF-2}. 
\end{lem}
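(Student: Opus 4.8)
The plan is to mirror the proof of Lemma~\ref{lem_formula_pairing_K/SQM} under the dictionary $\SQM \rightsquigarrow \SQFT$, $\K \rightsquigarrow \KO((q))$, $\gamma_{\K} \rightsquigarrow \alpha_{\KO((q))}$, with the Eilenberg--MacLane bottom rows there replaced by the modular-form rows of \eqref{diag_KO((q))/SQFT}. First I would assemble a three-tier commutative diagram of pairings. The top tier is the $\Z$-valued pairing $\langle -,-\rangle_{\alpha_{\KO((q))/\SQFT}}$ on $\pi_{-d}\SQFT \times \pi_{d-20}\KO((q))/\SQFT$; the middle tier is the $\Z$-valued pairing $\langle -,-\rangle_{\alpha_{\KO((q))}}$ on $\pi_{-d}\KO((q)) \times \pi_{d-20}\KO((q))$ that Construction~\ref{const_pairing_general} associates to $\alpha_{\KO((q))} \in \pi_{20}I_\Z\KO((q))$ (Definition~\ref{def_alpha_KO}), viewed as a $\KO((q))$-module map $\KO((q)) \to \Sigma^{-20}I_\Z\KO((q))$; and the bottom tier is the $\Q$-valued pairing $\MF^\Q_{-d/2} \times \frac{\Q((q))}{\MF^\Q_{(d-20)/2}} \to \Q$, $(\phi,\psi) \mapsto \frac{1}{2}\Delta\cdot\phi\cdot\psi|_{q^0}$, which is well defined by Fact~\ref{fact_MF-2}. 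The vertical arrows from the top tier to the middle tier are $\varphi$ on the left factor and $C\varphi$ on the right factor (both from \eqref{diag_KO((q))/SQFT}); the arrows from the middle tier to the bottom tier are the Pontryagin character $\Ph$, on the right factor composed with the projection onto the quotient; and the curved composites from the top tier straight to the bottom tier are $\Phi$ and $\Ph/\Phi$, exactly as they appear in \eqref{diag_pairing_KO/SQFT}.

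The computation then splits into three compatibilities. First, unwinding Construction~\ref{const_pairing_general} shows that the middle pairing is the ring multiplication $\pi_{-d}\KO((q)) \times \pi_{d-20}\KO((q)) \to \pi_{-20}\KO((q))$ followed by $\alpha_{\KO((q))}\colon \pi_{-20}\KO((q)) \to \Z$; combining this with Definition~\ref{def_alpha_KO} and with the fact that $\Ph$ restricts to the standard inclusion $\pi_{-20}\KO((q)) \simeq 2\Z((q)) \hookrightarrow \Q((q))$ gives $\langle x, y\rangle_{\alpha_{\KO((q))}} = \frac{1}{2}\Delta\cdot\Ph(x)\cdot\Ph(y)|_{q^0}$, which is exactly the compatibility of the middle and bottom tiers. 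Second, the compatibility of the top and middle tiers is the identity $\langle \mathcal{T}, C\varphi(\mathcal{V})\rangle_{\alpha_{\KO((q))/\SQFT}} = \langle \varphi(\mathcal{T}), \mathcal{V}\rangle_{\alpha_{\KO((q))}}$ for $\mathcal{T}\in\pi_{-d}\SQFT$, $\mathcal{V}\in\pi_{d-20}\KO((q))$, which follows at once from the defining relation $I_\Z C\varphi(\alpha_{\KO((q))/\SQFT}) = \alpha_{\KO((q))}$ of Definition~\ref{def_alpha_spin/string} --- equivalently, from the description of $p(\alpha_{\KO((q))/\SQFT})$ recorded in Remark~\ref{rem_alpha_rel}. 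Third, the two curved triangles commute because \eqref{diag_KO((q))/SQFT} does: the left one says that $\Ph\circ\varphi$ is $\Phi$ followed by the inclusion $\MF^\Q \hookrightarrow \Sigma^{4\Z}H\Q((q))$, and the right one is the definition of $\Ph/\Phi$ in \eqref{diag_KO((q))/SQFT}. Composing these three compatibilities produces the formula \eqref{eq_formula_pairing_KO/SQFT}.

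The one genuinely non-formal point, and the step I expect to be the main obstacle, is the well-definedness of the bottom-tier pairing: that $\frac{1}{2}\Delta\cdot\Phi(\mathcal{T})\cdot\psi|_{q^0}$ does not depend on the lift $\psi\in\Q((q))$ of the given class in $\Q((q))/\MF^\Q_{(d-20)/2}$. Since $\Phi(\mathcal{T})\in\MF^\Q_{-d/2}$, changing the lift alters $\frac{1}{2}\Delta\cdot\Phi(\mathcal{T})\cdot\psi$ by a weakly holomorphic modular form lying in $\frac{1}{2}\Delta\cdot\MF^\Q_{-d/2}\cdot\MF^\Q_{(d-20)/2}$, whose constant coefficient vanishes by Fact~\ref{fact_MF-2}; this is the same vanishing that underlies the existence of $\alpha_{\KO((q))/\SQFT}$ itself (Proposition~\ref{prop_vanish_anomaly}). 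Everything else is the diagram chase already performed in the $\SQM$ warm-up, and --- just as in Proposition~\ref{prop_formula_pairing_SQM} --- the resulting formula, being written purely in terms of the rationalized data $\Phi$ and $\Ph/\Phi$, is manifestly independent of the choice of lift made in Definition~\ref{def_alpha_spin/string}.
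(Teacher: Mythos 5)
Your proposal is correct and takes essentially the same route as the paper: it builds the same ladder of compatible pairings through $\langle-,-\rangle_{\alpha_{\KO((q))}}$, uses the defining relation $I_\Z C\varphi(\alpha_{\KO((q))/\SQFT})=\alpha_{\KO((q))}$ (Remark \ref{rem_alpha_rel}) for the top compatibility, Definition \ref{def_alpha_KO} for the middle one, and the commutativity of \eqref{diag_KO((q))/SQFT} plus Fact \ref{fact_MF-2} for the bottom. The only cosmetic difference is that the paper inserts an extra intermediate row $\Q((q))\times\Q((q))\to\Q$ before passing to $\MF^\Q_{-d/2}\times\Q((q))/\MF^\Q_{(d-20)/2}$, whereas you collapse those two steps into one.
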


\begin{proof}
Let us denote by $\alpha_{\KO((q))} \colon \KO((q)) \to \Sigma^{-20}I_\Z \KO((q))$ be the $\KO((q))$-module morphism given by the multiplication by $\alpha_{\KO((q))} \in \pi_{-20}I_\Z \KO((q))$ in Definition \ref{def_alpha_KO}. 
Consider the following diagram. 
\begin{align}
\xymatrix{
\pi_{-d}\SQFT \ar[d]_{\Phi}^-{\eqref{diag_KO((q))/SQFT}}\ar@/_80pt/[ddd]_(.7){\Phi}  & \times&\pi_{d-20}\KO((q))/\SQFT \ar@/_90pt/[ddd]_(.7){\Ph/\Phi} \ar[rrr]^-{\langle -, - \rangle_{\alpha_{\KO((q))/\SQFT}}} &&& \Z \ar@{=}[d]\\
    \pi_{-d}\KO((q)) \ar@{_{(}-_>}[d]^-{\Ph} & \times&\pi_{d-20}\KO((q))  \ar@{_{(}-_>}[d]^-{\Ph} \ar[u]^-{C\psi} \ar[rrr]^-{\langle -, - \rangle_{\alpha_{\KO((q))}}} &&& \Z\ar@{_{(}-_>}[d] \\
    \pi_{-d}\Sigma^{4\Z}H\Q((q)) = \Q((q))  & \times&\pi_{d-20}\Sigma^{4\Z}H\Q((q)) = \Q((q))  \ar[d]^-{\mathrm{proj}}  \ar[rrr]^-{ \left. \frac{1}{2}\Delta \cdot (-) \cdot (-)\right.|_{q^0}} &&& \Q\ar@{=}[d] \\
    \pi_{-d}H\MF^\Q =\MF^\Q_{-d/2}\ar@{_{(}-_>}[u]^-{} & \times & \pi_{d-20}\Sigma^{4\Z}H\Q((q))/H\MF^\Q \simeq \frac{\Q((q))}{\MF^\Q_{(d-20)/2}} \quad
     \ar[rrr]^-{\left. \frac{1}{2}\Delta \cdot (-) \cdot (-) \right|_{q^0}}_-{\rm{Fact }\ \ref{fact_MF-2}} &&& \Q 
    }
\end{align}
The two squares (i.e., the left and middle columns) are commutative by the commutativity of \eqref{diag_KO((q))/SQFT}. 
The pairings in the first and second rows are compatible by Remark \ref{rem_alpha_rel}. 
The pairings in the second and third rows are compatible by Definition \ref{def_alpha_KO}. 
The compatibility between the third and fourth rows is automatic. 
Thus we get the compatibility of the first and fourth rows, which completes the proof of Lemma \ref{lem_formula_pairing_KO((q))/SQFT}.  
\end{proof}

Next we apply Construction \ref{const_pairing_general} to $\alpha = \alpha_{\spin/\stri} \colon \SQFT \to \Sigma^{-20}I_\Z \MSpin/\MString$. We get a $\Z$-valued pairing
\begin{align}\label{eq_pairing}
    \langle -, - \rangle_{\alpha_{\spin/\stri}} \colon \pi_{-d} \SQFT \times \pi_{d-20} \MSpin/\MString \to \Z. 
\end{align}

\begin{prop}[{the formula for $\langle -, - \rangle_{\alpha_{\spin/\stri}}$}]\label{prop_formula_pairing}
Let $d$ be an integer such that $d \equiv 0 \pmod 4$. 
The pairing \eqref{eq_pairing}
    associated to the morphism $\alpha_{\spin/\stri}$ is given by the following formula. 
    \begin{align}\label{eq_formula_pariring_inProp}
        \langle [\mathcal{T}], [M, N] \rangle_{\alpha_{\spin/\stri}} 
        =\left.\frac{1}{2}\Delta \cdot \Phi([\mathcal{T}]) \cdot \Wit_\rel([M, N]) \right|_{q^0}
    \end{align}
    Here, $\Phi([\mathcal{T}]) \in \pi_{-d}H\MF^\Q = \MF^\Q_{-d/2}$ and $\Wit([M, N]) \in \frac{\Q((q))}{\MF_{(d-20)/2}^\Q}$ is the relative Witten genus in \eqref{eq_rel_Wit}. 
    The right hand side of \eqref{eq_formula_pariring_inProp} is well-defined because of Fact \ref{fact_MF-2}. 
    In particular, the pairing \eqref{eq_pairing} does not depend on the choice made in Definition \ref{def_alpha_spin/string}. 
\end{prop}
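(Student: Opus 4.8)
The plan is to deduce the formula \eqref{eq_formula_pariring_inProp} from Lemma \ref{lem_formula_pairing_KO((q))/SQFT} by comparing the pairing $\langle-,-\rangle_{\alpha_{\spin/\stri}}$ with $\langle-,-\rangle_{\alpha_{\KO((q))/\SQFT}}$ along the map $\sigma_\rel \colon \MSpin/\MString \to \KO((q))/\SQFT$ appearing in \eqref{diag_KO((q))/SQFT}. The key observation is that, by the definition \eqref{eq_def_alpha_spin/stri} of $\alpha_{\spin/\stri}$ as the composition $\alpha_{\KO((q))/\SQFT}$ followed by $I_\Z\sigma_\rel$, Construction \ref{const_pairing_general} gives the naturality relation
\begin{align}
    \langle \mathcal{T}, [M,N]\rangle_{\alpha_{\spin/\stri}} = \langle \mathcal{T}, \sigma_\rel([M,N])\rangle_{\alpha_{\KO((q))/\SQFT}}
\end{align}
for all $\mathcal{T}\in\pi_{-d}\SQFT$ and $[M,N]\in\pi_{d-20}\MSpin/\MString$, since applying $I_\Z\sigma_\rel$ on the Anderson-dual side corresponds to precomposing with $\sigma_\rel$ on the homotopy groups.

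Next I would assemble the diagram \eqref{diag_proof_formula_pairing_SQM}-style three-row comparison: the top row is the pairing $\langle-,-\rangle_{\alpha_{\spin/\stri}}$ with the vertical maps $\id_{\pi_{-d}\SQFT}$ and $\sigma_\rel$; the middle row is $\langle-,-\rangle_{\alpha_{\KO((q))/\SQFT}}$; and the bottom row is the rational pairing $\left.\tfrac12\Delta\cdot(-)\cdot(-)\right|_{q^0}$ on $\MF^\Q_{-d/2}\times\frac{\Q((q))}{\MF^\Q_{(d-20)/2}}$. The top two rows are compatible by the naturality relation just displayed; the bottom two rows are compatible by Lemma \ref{lem_formula_pairing_KO((q))/SQFT}, once one notes that $(\Ph/\Phi)\circ\sigma_\rel = \Wit_\rel$, which is exactly the commutativity of the right column of \eqref{diag_KO((q))/SQFT} (the definitions of $\sigma_\rel$, $\Ph/\Phi$, and $\Wit_\rel$ are all read off from that diagram). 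Chaining the two compatibilities gives \eqref{eq_formula_pariring_inProp}. The well-definedness of the right-hand side is inherited verbatim from Lemma \ref{lem_formula_pairing_KO((q))/SQFT}, i.e.\ from Fact \ref{fact_MF-2}: $\tfrac12\Delta\cdot\Phi([\mathcal{T}])$ lies in $\MF^\Q_{-10}\cdot q$-adic completion, the product with $\Wit_\rel([M,N])$ is only defined modulo $\MF^\Q_{-2}$, and any such ambiguity has vanishing $q^0$-coefficient. Finally, since the right-hand side of \eqref{eq_formula_pariring_inProp} makes no reference to the chosen lift $\alpha_{\KO((q))/\SQFT}$ of $\alpha_{\KO((q))}$, and the left-hand side equals it, the pairing is independent of the choice in Definition \ref{def_alpha_spin/string}; this also follows abstractly because, as noted in Remark \ref{rem_alpha_rel}, the ambiguity is a torsor over $\Ext(\pi_{-21}\KO((q))/\SQFT,\Z)$ which only affects the $\Ext$-part and not the $\Hom$-part that Construction \ref{const_pairing_general} extracts.

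The main obstacle, such as it is, is bookkeeping rather than conceptual: one must be careful that the degree shifts and the identifications $\pi_{-d}H\MF^\Q = \MF^\Q_{-d/2}$, $\pi_{d-20}\Sigma^{4\Z}H\Q((q))/H\MF^\Q \simeq \frac{\Q((q))}{\MF^\Q_{(d-20)/2}}$ from \eqref{diag_pairing_KO/SQFT} are used consistently, and that the vertical map $\sigma_\rel$ on homotopy groups genuinely induces $\Wit_\rel$ after projecting to the rational target — i.e.\ that no sign or normalization is lost in passing from the $\KO((q))$-module statement to the $\MSpin/\MString$-module statement. Since every piece (naturality of Construction \ref{const_pairing_general}, commutativity of \eqref{diag_KO((q))/SQFT}, Lemma \ref{lem_formula_pairing_KO((q))/SQFT}, Fact \ref{fact_MF-2}) is already in place, the proof is essentially a diagram chase entirely parallel to the proof of Proposition \ref{prop_formula_pairing_SQM} in the $\SQM$ warm-up.
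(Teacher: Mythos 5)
Your proposal is correct and follows essentially the same route as the paper: the same three-row comparison diagram, with the top two rows compatible by the definition of $\alpha_{\spin/\stri}$ as $I_\Z\sigma_\rel\circ\alpha_{\KO((q))/\SQFT}$, the bottom two compatible by Lemma \ref{lem_formula_pairing_KO((q))/SQFT}, and the identity $(\Ph/\Phi)\circ\sigma_\rel=\Wit_\rel$ supplied by the commutativity of \eqref{diag_KO((q))/SQFT}. The independence-of-choice argument via the $\Hom$/$\Ext$ splitting is likewise the one the paper intends.
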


\begin{proof}
Consider the following diagram. 
\begin{align}\label{diag_proof_formula_pairing_SQFT}
    \xymatrix{
    \pi_{-d}\SQFT \ar@{=}[d] & \times & \pi_{d-20}\MSpin/\MString \ar[d]_-{\sigma_\rel} \ar@/_60pt/[dd]_(.7){\Wit_\rel} \ar[rrr]^-{\langle -, - \rangle_{\alpha_{\spin/\stri}}} &&& \Z \ar@{=}[d]\\
   \pi_{-d}\SQFT \ar[d]_{\Phi}^-{\eqref{diag_KO((q))/SQFT}} & \times&\pi_{d-20}\KO((q))/\SQFT\ar[d]_-{\Ph/\Phi}^-{\eqref{diag_KO((q))/SQFT}} \ar[rrr]^-{\langle -, - \rangle_{\alpha_{\KO((q))/\SQFT}}} &&& \Z \ar@{_{(}-_>}[d]\\
     \pi_{-d}H\MF^\Q =\MF^\Q_{-d/2} & \times & \pi_{d-20}\Sigma^{4\Z}H\Q((q))/H\MF^\Q \simeq \frac{\Q((q))}{\MF^\Q_{(d-20)/2}} \quad
     \ar[rrr]^-{\left. \frac{1}{2}\Delta \cdot (-) \cdot (-) \right|_{q^0}}_-{\rm{Fact }\ \ref{fact_MF-2}}  &&& \Q
     }
\end{align}
The pairings in the first and second rows are compatible by the definition of $\alpha_{\spin/\stri}$ in \eqref{eq_def_alpha_spin/stri}. 
The compatibility of the second and third rows follow from Lemma \ref{lem_formula_pairing_KO((q))/SQFT}. 
The middle triangle is commutative because of the commutativity of \eqref{diag_KO((q))/SQFT}. 
Combining these, we get the desired formula \eqref{eq_formula_pariring_inProp}. This completes the proof of Proposition \ref{prop_formula_pairing}. 
\end{proof}

\begin{defn}\label{def_SQFTpairing}
    By the last sentence of Proposition \ref{prop_formula_pairing}, we abbreviate the notation to denote the pairing \eqref{eq_pairing} just by 
    \begin{align}\label{eq_pairing_noindex}
    \langle -, - \rangle_\SQFT \colon \pi_{-d} \SQFT \times \pi_{d-20} \MSpin/\MString \to \Z,
\end{align}
and call it the {\it secondary $\Z$-valued pairing}. 
\end{defn}

\subsection{Proof of the lower bound 576 (Theorem \ref{thm_main})}\label{subsec_proof_SQFT}

In this section, we prove Proposition \ref{prop_main}, which implies the main Theorem \ref{thm_main}. 
As an illustration of our strategy, we showed that the periodicity should be strictly more than $24$ (Proposition \ref{prop_1_intro}) in the Introduction. 
Although it is logically not necessary in the proof of Proposition \ref{prop_main} in this section, we recommend the reader to follow the proof there first. 

Recall that we are allowing ourselves to use the knowledge of string manifolds freely. Also the Witten genus $\Wit_\stri([M]) \in \mf_*$ for a string manifold $M$ is a differential geometric invariant which we are free to use. 
In particular we use the following fact on Witten genera. 

\begin{fact}\label{fact_Wit_image}
\label{fact:imageZ}
The image of the ring homomorphism $\Wit_{\stri}:\pi_{4\bullet}\MString\to \MF_{2\bullet}$ 
has a $\bZ$-basis given by \begin{equation}
a_{i,j,d} c_4^i c_6^j \Delta^d, \qquad \text{$i\ge 0$; $j=0,1$; $d\in\bZ$}
\end{equation}where \begin{equation}
a_{i,j,d} = \begin{cases}
24/\gcd(24,d) & \text{if $i=j=0$},\\
2 & \text{if $j=1$},\\
1 & \text{otherwise}.
\end{cases}\label{hop}
\end{equation}
\end{fact}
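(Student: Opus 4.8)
\textbf{Proof proposal for Fact~\ref{fact_Wit_image}.}

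The plan is to reduce the computation of the image of $\Wit_{\stri}\colon \pi_{4\bullet}\MString \to \MF_{2\bullet}$ to the known structure of $\pi_*\MString$ together with the classical facts about the Witten genus. First I would recall that the Witten genus, as a ring homomorphism on $\pi_*\MString$, kills torsion and factors through $\pi_*\MString \otimes \Q$, which is a polynomial ring; moreover by Hopkins' theorem (the ``$\sigma$-orientation'' story, cf.~\cite{AHR10}) the image is a well-understood subring of $\MF_*^\Q$. The cleanest route is to invoke the classical result of Hopkins and Mahowald that the image of $\Wit_{\stri}$ is exactly the subring of $\MF_*$ described in the statement; this is where the arithmetic content lives, and I would expect this to be the main obstacle if one insists on a self-contained proof, so I would instead cite it. Concretely, the key inputs are: (i) in degree $0$ (i.e.\ weight $0$, the degree-$4\bullet$ part landing in $\MF_{2\bullet}$ with $\bullet \equiv 0 \bmod 6$ corresponding to powers of $\Delta$), the image of $\pi_{24d}\MString$ in $\MF_{12d} = \Z\Delta^d$ is $(24/\gcd(24,d))\,\Z\cdot\Delta^d$ — this is precisely the statement that the Witten genus of a $24d$-dimensional string manifold, normalized suitably, realizes exactly these multiples, and it is the higher-degree generalization of the classical fact that $\Wit$ of a $24$-dimensional string manifold is $\frac{1}{24}$ times an integer hitting all of $\Z$ for $d=1$; and (ii) the generators $c_4$ and $c_6$ are hit with multiplicity $1$ and $2$ respectively (the factor $2$ for $c_6$ reflecting that $E_6$-type classes come from $\KO$-theory with an inherent factor of $2$, i.e.\ $\pi_{4k}\KO \hookrightarrow \pi_{4k}\KU$ has image $2\Z$ in the relevant degrees), while products $c_4^i c_6^j$ with $j \le 1$ and $i \ge 0$ are hit with multiplicity $1$ once $i \ge 1$ even when $j=1$? — no: the stated multiplicity is $2$ whenever $j=1$ regardless of $i$, and $1$ otherwise; this comes from the precise image description.

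In slightly more detail, I would proceed as follows. First, establish that $\MF_{2\bullet}$ decomposes as a free $\Z$-module with basis $\{c_4^i c_6^j \Delta^d : i \ge 0,\ j \in \{0,1\},\ d \in \Z\}$ using the defining relation $c_4^3 - c_6^2 = 1728\Delta$ to eliminate higher powers of $c_6$ (this is a standard fact and is essentially already recorded in the Notations section, where $\MF_* = \Z[c_4,c_6,\Delta,\Delta^{-1}]/(c_4^3-c_6^2-1728\Delta)$). Second, observe that the image of $\Wit_{\stri}$ is a graded $\Z$-submodule, and since $\MF_{2\bullet}$ is torsion-free, it suffices to determine, for each basis element $c_4^i c_6^j \Delta^d$, the largest integer $m$ such that $\frac{1}{m}$ times it lies in the image — equivalently to compute the index. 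Third, use multiplicativity: the Witten genus is a ring map, so the image is closed under multiplication, and one reduces to understanding the ``indecomposable'' directions. The key computational facts to cite are then exactly the three cases: $a_{0,0,d} = 24/\gcd(24,d)$ (Witten genus of string manifolds in dimension divisible by $24$, the refinement of the level-$1$ statement via $q$-expansions and the fact that $\Delta$ is a cusp form whose relation to $\pi_*\tmf$ gives these denominators), $a_{i,0,d}=1$ for $i \ge 1$ (the class $c_4$, coming from a string manifold whose Witten genus is $E_4$, is primitively hit), and $a_{i,1,d} = 2$ for all $i \ge 0$ (the factor $2$ obstruction attached to $c_6 \sim E_6$).

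The main obstacle is genuinely the arithmetic of the denominators $24/\gcd(24,d)$: proving this from scratch requires either the Hopkins--Mahowald computation of $\pi_*\tmf$ or an explicit construction of string manifolds with prescribed Witten genera in all degrees $24d$ (the latter being exactly the kind of Toda-bracket manifold construction alluded to in the Introduction and Appendix~\ref{app:Toda}). For the purposes of this paper I would simply \emph{cite} this image description — it is a known theorem of Hopkins (see e.g.\ the discussion around the $\sigma$-orientation in \cite{AHR10}, and \cite[Chapter 13]{TMFBook}) — and record it as Fact~\ref{fact_Wit_image} without reproving it, since the logical role of this statement in the paper is only as an input to the periodicity argument in Section~\ref{subsec_proof_SQFT}, not as a result we are claiming as new. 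Thus the ``proof'' is: reduce to the stated basis of $\MF_{2\bullet}$, note torsion-freeness so that the image is detected by indices, and cite the classical computation of those indices.
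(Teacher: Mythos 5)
Your instinct to treat this as a citation rather than something to reprove is the same as the paper's, and the paper's proof is indeed two lines long. But your citation is incomplete in a way that matters. The Hopkins--Mahowald computation (cf.\ \cite{Hopkins2002}, \cite[Chapter 13]{TMFBook}) describes the image of $\pi_*\tmf \to \mf_{*/2}$, not the image of $\Wit_{\stri}\colon \pi_*\MString \to \mf_{*/2}$. To deduce the latter from the former one needs the further input that the Ando--Hopkins--Rezk string orientation $\pi_*\MString \to \pi_*\tmf$ is \emph{surjective}; this is Devalapurkar's theorem \cite{devalapurkar2020andohopkinsrezk}, and it is the other half of the paper's proof. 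Without it, one only gets that the image of $\Wit_{\stri}$ is \emph{contained} in the Hopkins--Mahowald subring, not that every element there is realized by a string manifold. Your write-up conflates these and attributes the full $\MString$-level statement to Hopkins, which it is not.

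A second, smaller point: you frame ``an explicit construction of string manifolds with prescribed Witten genera'' as an alternative route and suggest this is what Appendix~\ref{app:Toda} does. That is a misreading of the paper's architecture. The Toda-bracket manifold constructions there are not used to prove Fact~\ref{fact_Wit_image}; they are used later (in the proof of Proposition~\ref{prop_lowerbound_12}) to manufacture explicit elements of $\pi_*\MSpin/\MString$ to feed into the Anderson duality pairing. Fact~\ref{fact_Wit_image} itself is an input consumed by that argument, obtained by the two-citation route just described, and the paper does not attempt to realize it by hand. Your heuristic discussion of the arithmetic (the factor $2$ for $c_6$ from $\pi_{8k+4}\KO \hookrightarrow \pi_{8k+4}\KU$, the $24/\gcd(24,d)$ denominators, the $\Z$-basis of $\MF_*$ via $c_4^3 - c_6^2 = 1728\Delta$) is fine as motivation, but it does not substitute for naming the surjectivity theorem.
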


\begin{rem}
The factor $2$ when $j=1$ in \eqref{hop} is due to the multiplication by 2 in $\pi_{8k+4}\KO\simeq \bZ \to \pi_{8k+4} \K\simeq \bZ$ and is not very surprising. 
The nontrivial information is therefore mainly in the $i=j=0$ case.
\end{rem}

\begin{proof}
    This follows from the surjectivity of $\pi_* \MString \to \pi_* \tmf$ (\cite{devalapurkar2020andohopkinsrezk}) and the corresponding fact about the image of $\pi_* \tmf \to \mf_{*/2}$ (\cite{Hopkins2002}). 
\end{proof}


By Fact \ref{fact_Wit_image}, we know that $\Delta^{24}$ is contained in the image $\Phi \colon \pi_{-24 \times 24}\SQFT \to \MF_{-24 \times 12}$. 
This allows us to reduce the proof to the following two propositions. 

\begin{prop}\label{prop_16}
    If $k \Delta^{-16}$ is contained in the image of $\Phi \colon \pi_{-16 \times 24}\SQFT \to \MF_{-16 \times 12}$, 
    then we have $3|k$. 
\end{prop}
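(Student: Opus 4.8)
The plan is to mimic the proof of Proposition~\ref{prop_1_intro} but with the pairing in degree $d = 16\times 24 = 384$, so that we pair $\pi_{-384}\SQFT$ against $\pi_{384-20}\MSpin/\MString = \pi_{364}\MSpin/\MString$. Suppose $k\Delta^{-16} \in \mathrm{Im}(\Phi\colon \pi_{-384}\SQFT \to \MF_{-192})$, say $\Phi(\mathcal{T}) = k\Delta^{-16}$. The formula \eqref{eq_formula_pariring_inProp} then gives, for any $[M,N] \in \pi_{364}\MSpin/\MString$,
\begin{align}
\Z \ni \langle \mathcal{T}, [M,N]\rangle_\SQFT = \left.\tfrac{1}{2}\Delta \cdot k\Delta^{-16} \cdot \Wit_\rel([M,N])\right|_{q^0} = \left.\tfrac{k}{2}\Delta^{-15}\cdot \Wit_\rel([M,N])\right|_{q^0}.
\end{align}
So I need to exhibit a relative string-bordism class $[M,N]$ of degree $364$ whose relative Witten genus has the property that $\tfrac12 \Delta^{-15}\Wit_\rel([M,N])$ has $q^0$-coefficient with $3$-adic valuation exactly that of $\tfrac{k}{3}$-type — concretely, something forcing $3 \mid k$.

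The key step is the construction of the right element of $\pi_{364}\MSpin/\MString$. Following the strategy announced in the introduction, I would build it via the manifold-level Toda bracket construction of Appendix~\ref{app:Toda}, using $3$-primary information in the Adams--Novikov spectral sequence for $\tmf$: the relevant input is the element $\beta$ (or a power/product involving $\alpha_1, \beta$) in $\pi_* \tmf_{(3)}$ detecting a class near degree $364 - 20$ or so whose image under $\Phi$ involves $\Delta^{15}$ with denominator-$3$ contributions. More precisely, I expect to want a class $[M,N]$ with $\Wit_\rel([M,N]) \equiv \tfrac{c}{3}\Delta^{15} \cdot(\text{unit}) \bmod \MF^\Q_{182}$ with $c$ coprime to $3$, using Lemma~\ref{lem_product_Wit} to multiply a low-degree generator like $[D^4,S^3_\Lie]$ (whose relative Witten genus is $\tfrac{E_2}{12} \bmod \MF^\Q_2$ by Lemma~\ref{lem_Ind_rel_D4S3}) by a suitable closed string manifold, and possibly invoking a Toda-bracket class to reach the degree $3$-torsion behavior. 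Then $\tfrac12 \Delta^{-15}\Wit_\rel|_{q^0}$ lands in $\tfrac{1}{3}\Z \setminus \Z$ unless $3\mid k$.

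The main obstacle is precisely this: producing a manifold (or relative bordism class) in degree $364$ with a controlled relative Witten genus whose constant term pins down the $3$-divisibility. The degree is large, so direct constructions are hopeless; everything rests on the Toda-bracket-to-manifold translation and on extracting the $q^0$-coefficient of $\Delta^{-15}$ times a modular form of weight $182$, which requires knowing the relevant $\tmf_{(3)}$ homotopy element and its Hurewicz/Witten image well enough. I would first settle the purely algebraic question — which element of $\MF_{182}^\Q$ (equivalently which $3$-divisibility pattern) is realized by $\Wit_\rel$ of a degree-$364$ class — by consulting the image-of-$J$-type and $\tmf$-homotopy data (the $3$-primary $\beta$-family), then lift that to a manifold via Appendix~\ref{app:Toda}, and finally read off the contradiction from integrality of the pairing. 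A secondary subtlety is checking that the chosen class is not annihilated by $\partial$ into $\pi_*\MString$ in a way that would make $\Wit_\rel$ land in $\MF^\Q$ (hence have zero class mod $\MF^\Q$); Lemma~\ref{lem_product_Wit} and the exact sequence of \eqref{diag_KO((q))/SQFT} should handle this bookkeeping.
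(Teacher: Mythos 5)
Your overall skeleton is right — you pick the correct degree ($d=384$, so $\pi_{364}\MSpin/\MString$), you correctly reduce to finding a relative bordism class with the right relative Witten genus, and you correctly identify that one should multiply $[D^4,S^3_\Lie]$ (with $\Wit_\rel=\frac{E_2}{12}\bmod\MF^\Q_2$) by a closed string manifold using Lemma~\ref{lem_product_Wit}. But you then speculate that a Toda-bracket construction is needed and leave the argument in an exploratory state, when in fact for $d=16$ no Toda brackets are required at all; the paper reserves the Toda-bracket machinery for the genuinely harder $d=12$ case (Proposition~\ref{prop_lowerbound_12}).

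The concrete step you are missing is the use of Fact~\ref{fact_Wit_image}: the smallest positive integer $a$ such that $a\Delta^{15}$ lies in the image of $\Wit_\stri\colon\pi_{360}\MString\to\MF_{180}$ is $a_{0,0,15}=24/\gcd(24,15)=8$. Choose a $360$-dimensional closed string manifold $M$ with $\Wit_\stri([M])=8\Delta^{15}$. Then $[D^4,S^3_\Lie]\cdot[M]\in\pi_{364}\MSpin/\MString$ has $\Wit_\rel=\bigl(\frac{1}{12}+O(q)\bigr)\cdot 8\Delta^{15}$, and for any $\mathcal{T}$ with $\Phi(\mathcal{T})=k\Delta^{-16}$ the pairing formula gives
\begin{align}
\Z\ni\left.\frac{1}{2}\Delta\cdot k\Delta^{-16}\cdot\Bigl(\frac{1}{12}+O(q)\Bigr)\cdot 8\Delta^{15}\right|_{q^0}=\frac{k}{3},
\end{align}
hence $3\mid k$. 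Your "$\frac{c}{3}\Delta^{15}$ with $c$ coprime to $3$" heuristic is correct (here $\frac{8}{12}=\frac{2}{3}$), but it needs to be anchored by the explicit $24/\gcd(24,15)=8$ computation; without that, the proof is not complete. No image-of-$J$ analysis, no $3$-primary $\beta$-family bookkeeping, and no manifold-level Toda brackets are needed for this proposition.
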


\begin{prop}\label{prop_lowerbound_12}
    If $k \Delta^{-12}$ is contained in the image of $\Phi \colon \pi_{-12 \times 24}\SQFT \to \MF_{-12 \times 12}$, 
    then we have $2|k$. 
\end{prop}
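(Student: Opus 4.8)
The plan is to test a hypothetical class against the secondary pairing in the single relevant degree $d=288$ and read off a factor of $2$ from integrality.

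Suppose $\mathcal{T}\in\pi_{-288}\SQFT$ has $\Phi(\mathcal{T})=k\Delta^{-12}$. By Construction \ref{const_pairing_general}, Definition \ref{def_SQFTpairing} and the formula of Proposition \ref{prop_formula_pairing}, for every $[M,N]\in\pi_{268}\MSpin/\MString$ one has
\[
\langle\mathcal{T},[M,N]\rangle_\SQFT=\left.\tfrac12\Delta\cdot\Phi(\mathcal{T})\cdot\Wit_\rel([M,N])\right|_{q^0}=\left.\tfrac k2\,\Delta^{-11}\,\Wit_\rel([M,N])\right|_{q^0}\in\Z ,
\]
the right-hand side being well defined by Fact \ref{fact_MF-2}. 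So it is enough to produce one class $[M,N]\in\pi_{268}\MSpin/\MString$ for which $\Delta^{-11}\Wit_\rel([M,N])\big|_{q^0}$ is an \emph{odd} integer; then $\tfrac k2\cdot(\text{odd})\in\Z$ forces $2\mid k$, which is exactly what is claimed.

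Such a class has to be ``genuinely relative.'' If $[M,N]=C\iota[M']$ with $[M']\in\pi_{268}\MSpin$, then by \eqref{diag_Wit_rel_and_Wit} its relative Witten genus is the honest Witten genus of a closed $268$-dimensional spin manifold, which lies in $2\Z((q))$ since $268\equiv4\pmod 8$ and $\pi_{268}\KO\xhookrightarrow{c}2\Z\subset\Z=\pi_{268}\KU$; multiplying by the integral $q$-series $\Delta^{-11}$ keeps all coefficients even, so the $q^0$-coefficient is even. The product construction $[L]\cdot[D^4,S^3_\Lie]$ of Example \ref{ex_D4S3} fails for a related reason: by Lemma \ref{lem_product_Wit} and Fact \ref{fact_Wit_image} it only yields relative Witten genera of the form $\tfrac1{12}\Delta^{11}p(j)E_2$ with $p\in\bZ[j]$ and $24\mid p(0)$, and one checks these contribute only integers to the pairing. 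Instead I would invoke the manifold-level Toda-bracket construction of Appendix \ref{app:Toda}: the required $[M,N]$ should be built, up to a $\pi_*\MString$-multiple with controlled Witten genus, from the input in the Adams--Novikov spectral sequence of $\tmf$ and related spectra that accounts for the coefficient $24/\gcd(24,12)=2$ at $d=12$ in Fact \ref{fact_Wit_image}, i.e.\ for $2\Delta^{12}$ but not $\Delta^{12}$ lying in the image of $\Wit_\stri$ in weight $144$. Concretely I expect $[M,N]$ to be realized by a Toda-bracket bordism whose string boundary represents an order-$2$ class in $\pi_{267}\MString$; its relative Witten genus is then the corresponding Massey product of $q$-series, which one reduces modulo $\MF^\Q_{134}$, divides by $\Delta^{11}$, and evaluates at $q^0$, the remaining arithmetic ($E_2\equiv1\pmod{24}$ and the explicit $q$-expansion of a $j$-polynomial) being arranged to give an odd integer. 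With this, Proposition \ref{prop_lowerbound_12} follows, and combined with Propositions \ref{prop_1_intro} and \ref{prop_16} together with the fact (Fact \ref{fact_Wit_image}) that $\Delta^{24}$ lies in the image of $\Phi$, it yields Proposition \ref{prop_main} and hence Theorem \ref{thm_main}.

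The main obstacle is exactly this construction step: pinpointing the precise Toda bracket in $\pi_*\tmf$ whose geometric realization carries a boundary of order $2$, checking that the procedure of Appendix \ref{app:Toda} lands in $\pi_{268}\MSpin/\MString$ with the predicted relative Witten genus, and then doing the modular-forms computation so that the $2$-adic valuation of $\Delta^{-11}\Wit_\rel([M,N])\big|_{q^0}$ comes out to be exactly $0$ --- not larger (which would contradict $\TMF$, which satisfies our hypotheses, having $2\Delta^{12}$ in its image) and not smaller. The surrounding homotopy theory --- the pairing, its explicit formula, and the reduction to these divisibility statements --- is already in place.
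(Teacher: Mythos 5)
Your reduction is correct and matches the paper's: you set up the pairing in degree $d=288$, observe that integrality gives $\tfrac k2\cdot\bigl.\Delta^{-11}\Wit_\rel([M,N])\bigr|_{q^0}\in\Z$, and correctly note that it suffices to exhibit one class $[M,N]\in\pi_{268}\MSpin/\MString$ making that factor an odd integer. You also correctly rule out classes coming from closed spin manifolds and from the naive products $[L]\cdot[D^4,S^3_\Lie]$ (the latter is exactly Remark \ref{rem_12} of the paper).

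However, the construction of the required class is the entire content of the proposition, and you do not carry it out --- you say so yourself. This is a genuine gap, not a routine verification. The paper's construction is substantially more involved than your sketch suggests, and it does not proceed via a single Toda bracket near degree $288$ tied to the coefficient $2$ of $\Delta^{12}$. Instead it builds a class in $\pi_{76}\MSpin/\MString$: one forms the $25$-dimensional string manifold $L_{25}^\stri$ realizing $\langle[S^1_\Lie],[S^3_\Lie],[\bar K]\rangle_{\MString/\bS}$, shows that $(L_{25}^\stri)^{\times 3}$ admits a spin nullbordism $\mathcal U^\spin$ (using $[S^1_\Lie]^3=12[S^3_\Lie]$ and a choice of $U^\spin_{(S^1)^{\times 3}}$ with $[U^\spin_{(S^1)^{\times 3}},(S^1_\Lie)^{\times 3}]=12[D^4,S^3_\Lie]$), and then proves by an explicit bordism of bordisms (a $\langle 2\rangle$-manifold argument comparing $\mathcal X^\stri$ with $(S^1_\Lie\times\mathcal Y^\spin)\circ 24\,\mathcal W^\spin$) the relation $24^3[\mathcal U^\spin,(L_{25}^\stri)^{\times 3}]=[L_{24}^\stri]^3\cdot[U^\spin_{(S^1)^{\times 3}},(S^1_\Lie)^{\times 3}]$, where $L_{24}^\stri$ realizes $\langle 24,[S^3_\Lie],[\bar K]\rangle$ with $\Wit_\stri=24\Delta$ exactly. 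This yields $\Wit_\rel([\mathcal U^\spin,(L_{25}^\stri)^{\times 3}])=E_2\Delta^3\bmod\MF^\Q_{36}$, and the final class in degree $268$ is obtained by multiplying with a string manifold of Witten genus $3\Delta^8$ (the coefficient $3=24/\gcd(24,8)$ from Fact \ref{fact_Wit_image}), giving pairing value $\tfrac{3k}{2}$. None of these steps --- the vanishing $[S^3_\Lie]\cdot[\bar K]=0$ in $\pi_{23}\MString$ that makes the brackets defined, the normalization of $\Wit_\stri(L_{24}^\stri)$, the $\langle2\rangle$-manifold comparison, or the arithmetic producing an odd coefficient --- is supplied or even correctly anticipated in your sketch, so the proposal does not constitute a proof.
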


\begin{proof}[Proof of Proposition \ref{prop_main} assuming Propositions \ref{prop_16} and \ref{prop_lowerbound_12}]
In this proof we use the notation for the graded unital subring
\begin{align}
    R_* := \mathrm{Im}\left( \Phi \colon \pi_{*}\SQFT \to \MF_{*/2}\right) \subset \MF_{*/2}. 
\end{align}
Since invertible elements in the graded ring $\MF_{*} = \Z[c_4, c_6, \Delta, \Delta^{-1}]$ are of exactly those the form $\pm\Delta^{d}$ for $d \in \Z$, any invertible element of $R_*$ is also of that form. In particular, the periodicity is multiple of $24$. 

Suppose that $R_*$ has periodicity $24d$ for $0< d < 24$. By above, we have $\Delta^{-d} \in R_{24d}$. 
The case $d = 8$ or $d=16$ directly contradicts Proposition \ref{prop_16}. 
If $d \neq 8, 16$, we have some positive integer $n$ and $m$ such that $nd= -12 + 24m$. 
Also note that $\Delta^{24} \in R_{24 \times 24}$ by Fact \ref{fact:imageZ} and \eqref{diag_rationalization_SQFT}. 
Then we have
\begin{align}
    R_{-12 \times 24} \ni (\Delta^{d})^n \cdot (\Delta^{24})^m = \Delta^{-12}, 
\end{align}
contradicting Proposition \ref{prop_lowerbound_12}. 
This completes the proof of Proposition \ref{prop_main} assuming Propositions \ref{prop_16} and \ref{prop_lowerbound_12}. 
\end{proof}

Thus we are left to prove Propositions \ref{prop_16} and \ref{prop_lowerbound_12}. 
As we illustrated in the proof of Proposition \ref{prop_1_intro} in the Introduction, our strategy is to use the integrality of the pairing
\begin{align}
    \langle-,-\rangle_\SQFT: \pi_{-d \times 24}\SQFT \times \pi_{d \times 24 -20}\MSpin/\MString\to \bZ.
\end{align}
for $d=16, 12$. 
Then the question boils down to the construction of interesting elements in $\pi_{d\times 24 - 20}\MSpin/\MString$ which guarantee the desired divisibility. 
The construction is easy for the case for $d=16$, as follows.

\begin{proof}[Proof of Proposition \ref{prop_16}]
    We use the pairing
    \begin{align}
        \langle-,-\rangle_\SQFT: \pi_{-16 \times 24}\SQFT \times \pi_{15 \times 24 + 4}\MSpin/\MString\to \bZ.
    \end{align}
    By Fact \ref{fact_Wit_image}, we have a $15 \times 24$-dimensional string manifold $M$ with $\Wit([M]) = 8\Delta^{15}$. 
    We also use the class $[D^4, S^3_\Lie] \in \pi_4\MSpin/\MString$ in Example \ref{ex_D4S3}. 
    Recall that we have computed the relative Witten genus of $[D^4, S^3_\Lie]$ in Lemma \ref{lem_Ind_rel_D4S3}. 
    Using the $\MString$-module structure of $\MSpin/\MString$, we get a class $[D^4, S^3_\Lie] \cdot [M] \in \pi_{15 \times 24 + 4}\MSpin/\MString$. 
    \begin{align}
        \Wit([D^4, S^3_\Lie] \cdot [M])
        &= \Wit([D^4, S^3_\Lie]) \Wit([M]) \\
        &= \left(\frac{1}{12} + O(q) \right) \cdot 8\Delta^{15}. 
    \end{align}
    
    Suppose there exists $\mathcal{T} \in \pi_{-16 \times 24} \SQFT$ with $\Phi(\mathcal{T}) = k \Delta^{-16}$. 
    Then, by the formula \eqref{eq_formula_pariring} and the integrality of the pairing, we get
    \begin{align}
        \Z &\ni \left.\frac{1}{2}\Delta \cdot k \Delta^{-16} \cdot\left(\frac{1}{12} + O(q) \right) \cdot 8\Delta^{15} \right|_{q^0} 
        = \frac{k}{3}. 
    \end{align}
    This gives the desired result. 
\end{proof}

\begin{rem}\label{rem_12}
    The reader may wonder if we could apply the same construction of manifolds to prove Proposition \ref{prop_lowerbound_12}, using an element in $\pi_{11 \times 24 + 4}\MSpin/\MString$ of the form $[D^4, S^3_\Lie] \cdot [M]$ with $[M] \in \pi_{11 \times 24}\MString$. Unfortunately, since $\gcd(11, 24)=1$, Fact~\ref{fact_Wit_image} implies that the minimum multiple of $\Delta^{11}$ which appear as the Witten genus of string manifold is $24$. If we used that manifold, we only get
    \begin{align}
     \bZ \ni \left.\frac{1}{2}\Delta \cdot k \Delta^{-12} \cdot\left(\frac{1}{12} + O(q) \right) \cdot 24\Delta^{11} \right|_{q^0} 
        = k, 
    \end{align}
    which gives no interesting consequence. We can also easily prove that any other choice of element $[M] \in \pi_{11 \times 24}\MString$ does not work as well. 
\end{rem}

Now we are left to prove Proposition \ref{prop_lowerbound_12}. 
As explained in Remark \ref{rem_12}, the most naive construction does not work. 
Rather, we will construct manifolds which realize {\it Toda brackets} in Thom spectra\footnote{Such constructions are suggested to the authors by S. Devalapurkar. }. The general construction explained in Appendix \ref{app:Toda}. 

We use the common notations for stable homotopy elements. 
In particular we are concerned with generators in degree $1$, $3$ and $20$. Let us summarize the notations. 
\begin{itemize}
    \item We have $\pi_1 \mathbb{S} \simeq \Z/2$, and $[S^1_\Lie]$ is the generator. We have $[S^1_\Lie] = \eta \in \pi_1 \mathbb{S}_{(2)}$. 
    \item We have $\pi_3 \mathbb{S} \simeq \Z/24$, and $[S^3_\Lie]$ is a generator. We have $[S^3_\Lie] = \nu \in \pi_3 \mathbb{S}_{(2)}$ and $[S^3_\Lie] = \alpha_1 \in \pi_3 \mathbb{S}_{(3)}$. 
    \item We have $\pi_{20}\mathbb{S} \simeq \Z/24$. Let us fix a $20$-dimensional stably framed manifold $\bar{K}$ representing a generator, which we take to satisfy $[\bar{K}] = \bar{\kappa} \in \pi_{20}\mathbb{S}_{(2)}$ and $[\bar{K}] = \beta_1^2 \in \pi_{20}\mathbb{S}_{(3)}$. 
\end{itemize}

We use the following facts about those elements and their Hurewicz images. 

\begin{fact}[{\cite{devalapurkar2020andohopkinsrezk}}]\label{fact_kappabar}
    \begin{enumerate}
        \item We have
        \begin{align}\label{eq_fact_nu_kappabar}
            [S^3_\Lie] \cdot [\bar{K}] = 0 \in \pi_{23}\MString. 
        \end{align} 
        Note that we have $[S^3_\Lie] \cdot [\bar{K}] \neq 0 \in \pi_{23}\bS$. 

        \item By (1) and $24[S^3_\Lie] = 0 \in \pi_3\bS$, we can form the Toda bracket $\left\langle 24, [S^3_\Lie], [\bar{K}] \right\rangle_{\tmf/\mathbb{S}}$ (see Appendix \ref{app:Toda}). We have
       \begin{align}\label{eq_Toda_Delta_tmf}
    [24\Delta] = \left\langle 24, [S^3_\Lie], [\bar{K}] \right\rangle_{\tmf/\mathbb{S}} \in \frac{\pi_{24}\tmf}{\pi_4 \tmf \cdot \bar{\kappa} + 24 \pi_{24}\tmf} = \frac{\pi_{24}\tmf}{ 24 \cdot \pi_{24}\tmf}.   
\end{align}
        Here we are using the injectivity of $\pi_{24}\tmf \to \mf_{12}$ to identify
        \begin{align}
            \pi_{24}\tmf \simeq \mathrm{im}(\pi_{24}\tmf \to \mf_{12}) = 24\Delta \cdot \Z + c_4^3 \cdot \Z \subset \mf_{12}. 
        \end{align}
        The class $[24\Delta]$ denotes $24\Delta$ modulo $24 \cdot(24\Delta \cdot  \Z + c_4^3 \cdot \Z)$. 
    \end{enumerate}
\end{fact}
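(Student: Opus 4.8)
The plan is to deduce both parts from the homotopy of the sphere spectrum and of $\tmf$ in these low degrees, together with the explicit computation of $\pi_*\MString$ and of the unit map $\mathbb{S}\to\MString$ carried out in \cite{devalapurkar2020andohopkinsrezk}.

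\emph{Part (1).} First recall from the tables of stable stems (e.g.\ \cite{Ranavel86}) that $\nu\bar\kappa$ is a nonzero element of $\pi_{23}\mathbb{S}_{(2)}$; since $[S^3_\Lie]$ reduces to $\nu$ and $[\bar K]$ reduces to $\bar\kappa$ $2$-locally, this already gives $[S^3_\Lie]\cdot[\bar K]\ne 0$ in $\pi_{23}\mathbb{S}$. To see that this product dies after mapping to $\pi_{23}\MString$, I would invoke the analysis of $\pi_*\MString$ for $*\le 23$ from \cite{devalapurkar2020andohopkinsrezk} --- the same input that feeds their theorem that $\pi_*\MString\to\pi_*\tmf$ is surjective --- from which one reads off that the Hurewicz image of $\nu\bar\kappa$ in $\pi_{23}\MString$ is zero. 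This is consistent with $\pi_{23}\tmf = 0$ (\cite[Chapter~13]{TMFBook}): geometrically, the framed $23$-manifold $S^3_\Lie\times\bar K$, though not framed-null-bordant, becomes null-bordant once only its string structure is remembered.

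\emph{Part (2).} Given (1), the relative Toda bracket $\langle 24, [S^3_\Lie], [\bar K]\rangle_{\tmf/\mathbb{S}}$ of Appendix~\ref{app:Toda} is defined: $24\cdot[S^3_\Lie] = 0$ already in $\pi_3\mathbb{S}\cong\mathbb{Z}/24$, and $[S^3_\Lie]\cdot[\bar K] = 0$ in $\pi_{23}\MString$, hence a fortiori in $\pi_{23}\tmf$. Its indeterminacy is $24\cdot\pi_{24}\tmf + \pi_4\tmf\cdot\bar\kappa$, and since $\pi_4\tmf = 0$ this is just $24\cdot\pi_{24}\tmf$; this is why the bracket is a well-defined element of $\pi_{24}\tmf/24\pi_{24}\tmf$. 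To identify it with $[24\Delta]$ one cannot simply push forward along the ring map $\tmf\to H\mf^{\mathbb{Q}}$, since there $24$ becomes invertible and the resulting bracket has full indeterminacy; instead one must work inside the descent (elliptic) spectral sequence for $\tmf$, where $24\Delta$ --- the smallest multiple of $\Delta$ that is a permanent cycle --- is precisely the class produced by this Massey product in $\nu$, $\bar\kappa$ and $24$. This is one of the key computations of \cite{devalapurkar2020andohopkinsrezk} (it is their mechanism for exhibiting $24\Delta$ in the image of $\pi_*\MString$), and I would invoke it after fixing the generators $\nu=[S^3_\Lie]$, $\bar\kappa=[\bar K]$ and the normalization of $\Delta$.

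The step I expect to be the genuine obstacle is this last identification: pinning down the Toda bracket on the nose --- not merely up to a unit or up to its $24\pi_{24}\tmf$ indeterminacy --- requires a careful chase through the descent spectral sequence for $\tmf$, tracking filtration jumps and sign and normalization conventions for $\Delta$, $\nu$ and $\bar\kappa$. That is exactly the technical content imported from \cite{devalapurkar2020andohopkinsrezk}, which I would lean on rather than reprove. Part (1), by contrast, is routine once the low-degree structure of $\pi_*\MString$ is granted.
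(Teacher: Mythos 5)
Your overall strategy matches the paper's: both parts are imported from \cite{devalapurkar2020andohopkinsrezk}, with Part (1) reduced to showing the Hurewicz image of the relevant class vanishes in $\pi_{23}\MString$, and Part (2) reduced to a spectral-sequence computation of the bracket. However, there are two real gaps.

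For Part (1), you only handle the $2$-local piece. The class $[S^3_\Lie]\cdot[\bar K]$ has a $3$-local component as well: $[S^3_\Lie]=\alpha_1$ and $[\bar K]=\beta_1^2$ at $p=3$, and $\alpha_1\beta_1^2$ is a nonzero element of $\pi_{23}\mathbb{S}_{(3)}$, so its vanishing in $\pi_{23}\MString_{(3)}$ is a separate assertion that cannot be deduced from the $2$-local statement $\nu\bar\kappa=0$. The paper cites two different theorems from Devalapurkar for the two primes. Your proposal also conflates two distinct claims by phrasing everything as ``the analysis of $\pi_*\MString$'': Devalapurkar actually proves these vanishings on an intermediate spectrum $B$ through which the Hurewicz map $\mathbb{S}\to\MString$ factors, not by computing $\pi_{23}\MString$ directly (which is considerably harder). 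Your sanity check via $\pi_{23}\tmf=0$ is of course not a proof, and you say as much, but the argument as written does not actually establish the $3$-local vanishing at all.

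For Part (2), you correctly identify that the main obstacle is pinning down the bracket on the nose, and you correctly note that rationalizing loses the information. But you propose to chase the bracket through the descent (elliptic) spectral sequence, whereas the paper works in the Adams--Novikov spectral sequence for $\tmf$ and --- crucially --- invokes a Toda bracket convergence theorem from \cite{belmont2021toda} to pass from a Massey product on the $E_\infty$-page to an actual Toda bracket in $\pi_*\tmf$. That convergence step is not automatic: Massey products in a spectral sequence do not converge to Toda brackets without hypotheses, and this is precisely the piece of technology that closes the argument. Your proposal flags the issue (``tracking filtration jumps and sign and normalization conventions'') but does not name the tool that resolves it, which is the one genuinely nontrivial ingredient the paper adds beyond quoting Devalapurkar.
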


\begin{proof}
    (1) follows from $\nu \bar{\kappa} = 0 \in \pi_{23}\MString_{(2)}$ \cite[Theorem 4.9]{devalapurkar2020andohopkinsrezk} and $\alpha_1 \beta_1^2 = 0 \in \pi_{23}\MString_{(3)}$ \cite[Theorem 5.6]{devalapurkar2020andohopkinsrezk}. (In \cite{devalapurkar2020andohopkinsrezk}, the corresponding equality is proved on a spectrum $B$ which factors the $\MString$-Hurewicz map as $\bS \to B \to \MString$.) 
    The corresponding products does not vanish on $\bS$ \cite{Ranavel86}. 

    (2) follows from \cite[Corollary 4.6 and Proposition 5.7]{devalapurkar2020andohopkinsrezk}, which shows the corresponding Toda bracket relation in the Adams-Novikov spectral sequences for $\tmf$. By the Toda bracket convergence theorem in \cite{belmont2021toda} applied to the Adams-Novikov spectral sequence, we get the desired result. 
\end{proof}

First, we construct the following Toda bracket manifolds (see Appendix~\ref{app:Toda} and in particular Proposition~\ref{prop_Toda_htpy=mfd} therein). 

\begin{const}[{$L_{24}^\stri$, $L_{24}^\stri$ and $L_{25}^\stri$}]
Let us fix
\begin{itemize}
    \item a string nullbordism $U^\stri_{S^3 \bar{K}}$ of $S^3_\Lie \times \bar{K}$ (which is possible by \eqref{eq_fact_nu_kappabar}), 
    \item a stably framed nullbordism $U_{24S^3}^\fr$ of $24S^3_\Lie$, 
    \item a stably framed nullbordism $U_{S^1S^3}^\fr$ of $S^1_\Lie \times S^3_\Lie$ and
    \item a spin nullbordism $U_{S^3}^\spin$ of $S^3_\Lie$. 
\end{itemize}
Using them, we construct the following manifolds. 
\begin{enumerate}
    \item A $24$-dimensional string manifold $L_{24}^\stri$ which is given by
    \begin{align}\label{eq_def_L24stri}
        L_{24}^\stri := U_{24S^3}^\fr \times \bar{K} \cup_{24S^3_\Lie \times \bar{K}} \overline{24 U^\stri_{S^3 \bar{K}}}. 
    \end{align}
    This manifold satisfies 
    \begin{align}
        [L_{24}^\stri] = \left\langle 24, [S^3_\Lie], [\bar{K}] \right\rangle_{\MString/\mathbb{S}} \in \frac{\pi_{24}\MString}{\pi_4 \mathbb{S} \cdot [\bar{K}] + 24 \pi_{24}\MString} = \frac{\pi_{24}\MString}{ 24 \cdot \pi_{24}\MString}. 
    \end{align}
    
    \item A $24$-dimensional spin manifold $L_{24}^\spin$ which is given by
    \begin{align}\label{eq_def_L24spin}
        L_{24}^\spin := U_{S^3}^\spin \times \bar{K} \cup_{S^3_\Lie \times \bar{K} } \overline{U_{S^3\bar{K}}^\stri}.  
    \end{align}
    This manifold satisfies 
    \begin{align}
        [L_{24}^\spin] = \left\langle [\bar{K}], [S^3_\Lie], 1\right\rangle_{\MSpin/\MString} \in \frac{\pi_{24}\MSpin}{\pi_{24} \MString + [\bar{K}] \cdot \pi_{4}\MSpin} = \frac{\pi_{24}\MSpin}{  \pi_{24}\MString}. 
    \end{align}

    \item A $25$-dimensional string manifold $L^\stri_{25}$ which is given by
    \begin{align}\label{eq_def_L25stri}
        L_{25}^\stri := U_{S^1S^3}^\fr \times \bar{K} \cup_{S^1_\Lie \times S^3_\Lie \times \bar{K} } \overline{S^1_\Lie \times U_{S^3\bar{K}}^\stri}.  
    \end{align}
     This manifold satisfies 
    \begin{align}
        [L_{25}^\stri] = \left\langle [S^1_\Lie], [S^3_\Lie], [\bar{K}]\right\rangle_{\MString/\mathbb{S}} \in \frac{\pi_{25}\MString}{\pi_{5} \mathbb{S} \cdot [\bar{K}] + [S^1_\Lie]\cdot \pi_{24}\MString} = \frac{\pi_{25}\MString}{  [S^1_\Lie] \cdot \pi_{24}\MString}. 
    \end{align}
\end{enumerate}
    
\end{const}

The following is the key property of the above construction. 

\begin{lem}\label{lem_Wit_L24stri}
We can choose the string null bordism $U_{S^3\bar{K}}^\stri$ of $S^3_\Lie \times \bar{K}$ so that we have
    \begin{align}\label{eq_Wit_L24stri}
        \Wit_\stri([L_{24}]^\stri) = 24\Delta. 
    \end{align}
\end{lem}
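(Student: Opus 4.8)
The strategy is to compute $\Wit_\stri([L_{24}^\stri])$ by tracking the Witten genus through the Mayer--Vietoris-type decomposition \eqref{eq_def_L24stri}, using the fact (from Appendix~\ref{app:Toda}, Proposition~\ref{prop_Toda_htpy=mfd}) that the Witten genus of a Toda-bracket manifold computes the corresponding Toda bracket in the target ring spectrum. Concretely, $\Wit_\stri$ sends $[L_{24}^\stri]$ to an element of the Toda bracket $\langle 24, \Wit_\stri([S^3_\Lie]), \Wit_\stri([\bar K])\rangle$ computed in the $\tmf$-side; but since $\Wit_\stri$ factors as $\MString \xrightarrow{\AHR_\stri} \tmf \xrightarrow{\Phi_\tmf} \mf_{*/2}$ and the Hurewicz image of $[S^3_\Lie]$ in $\pi_3\tmf$ is $\nu$ (resp.\ $\alpha_1$), with $\nu\bar\kappa = 0$ already on $\tmf$, the relevant bracket is exactly $\langle 24, \nu, \bar\kappa\rangle_{\tmf/\bS}$ from Fact~\ref{fact_kappabar}(2). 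That fact identifies this bracket with the class $[24\Delta]$ in $\pi_{24}\tmf/24\pi_{24}\tmf$, i.e.\ $24\Delta$ modulo $24(24\Delta\cdot\Z + c_4^3\cdot\Z)$.

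**Reducing the indeterminacy.** So from Fact~\ref{fact_kappabar}(2) alone we only know $\Wit_\stri([L_{24}^\stri]) \equiv 24\Delta$ modulo $24(24\Delta\,\Z + c_4^3\,\Z)$; a priori $\Wit_\stri([L_{24}^\stri])$ could be $24\Delta + 576 a\Delta + 24 b\, c_4^3$ for integers $a,b$. The point of the lemma is that we get to \emph{choose} the string nullbordism $U_{S^3\bar K}^\stri$ of $S^3_\Lie\times\bar K$. Changing this nullbordism changes $[L_{24}^\stri]$ by an element of the image of $\pi_{24}\MString \to \pi_{24}\MString$ coming from closed-up differences of nullbordisms — more precisely, changing $U_{S^3\bar K}^\stri$ by a closed string $24$-manifold $W$ changes $L_{24}^\stri$ (as built in \eqref{eq_def_L24stri}, with $24$ copies) by $24[W]$ in string bordism, hence changes $\Wit_\stri([L_{24}^\stri])$ by $24\Wit_\stri([W])$. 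By Fact~\ref{fact_Wit_image}, the image of $\Wit_\stri$ in degree $24$ is $24\Delta\,\Z + c_4^3\,\Z$ (since $a_{0,0,1} = 24/\gcd(24,1) = 24$ and $a_{3,0,0}=1$). Therefore the ambiguity in $\Wit_\stri([L_{24}^\stri])$ coming from the choice of nullbordism is exactly $24(24\Delta\,\Z + c_4^3\,\Z)$ — precisely the indeterminacy of the Toda bracket. So the two ambiguities match, and we can choose $U_{S^3\bar K}^\stri$ to kill the $576 a\Delta + 24 b\,c_4^3$ correction, leaving $\Wit_\stri([L_{24}^\stri]) = 24\Delta$ on the nose.

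**The main obstacle.** The delicate point is making rigorous the claim that the Witten genus of the Toda-bracket manifold $L_{24}^\stri$ \emph{is} the image under $\Phi_\tmf\circ\AHR_\stri$ of the homotopy-theoretic Toda bracket, with matching indeterminacies; this is where Appendix~\ref{app:Toda} and the compatibility of the manifold-level Toda bracket construction with $E_\infty$ ring maps must be invoked carefully. One must check that the bordism class $[L_{24}^\stri]$, formed from a specific collection of nullbordisms, maps under $\AHR_\stri$ into the Toda bracket $\langle 24, \nu, \bar\kappa\rangle$ in $\pi_{24}\tmf$ with the correct indeterminacy subgroup $\pi_4\tmf\cdot\bar\kappa + 24\pi_{24}\tmf$; then one applies the Toda bracket convergence theorem of \cite{belmont2021toda} as in the proof of Fact~\ref{fact_kappabar}(2). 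Granting that, the counting argument above — comparing the nullbordism ambiguity $24\cdot\mathrm{Im}(\Wit_\stri)_{24}$ with the Toda indeterminacy — does the rest, and one selects $U_{S^3\bar K}^\stri$ accordingly to obtain \eqref{eq_Wit_L24stri}.
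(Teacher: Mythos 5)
Your argument is correct and matches the paper's proof essentially step for step: you use compatibility of the $\AHR_\stri$ orientation with Toda brackets together with Fact~\ref{fact_kappabar}(2) to get $\Wit_\stri([L_{24}^\stri]) \equiv 24\Delta$ modulo $24(24\Delta\cdot\Z + c_4^3\cdot\Z)$, then observe that varying $U^\stri_{S^3\bar{K}}$ shifts $[L_{24}^\stri]$ by $24\cdot\pi_{24}\MString$, whose image under $\Wit_\stri$ is (by Fact~\ref{fact_Wit_image}) exactly $24(24\Delta\cdot\Z + c_4^3\cdot\Z)$, so the correction can be absorbed. The ``main obstacle'' you flag — rigorously matching the manifold-level bracket to the homotopy-theoretic one under a ring map — is also the one the paper leaves to Appendix~\ref{app:Toda} (Proposition~\ref{prop_Toda_htpy=mfd}) and the cited compatibility, so you and the paper are treating it at the same level of detail.
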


\begin{proof}
Since the AHR orientation $\AHR_\stri \colon \MString \to \tmf$ is compatible with the formation of Toda brackets, Fact \ref{fact_kappabar} (2) implies that
\begin{align}
    \Wit_\stri(L_{24}^\stri) \equiv 24\Delta \mod 24 \cdot (24\Delta \cdot  \Z + c_4^3 \cdot \Z). 
\end{align}
In order to modify $L_{24}^\stri$ so that the Witten genus is exactly $24\Delta$, notice that the ambiguity of the Toda bracket \eqref{eq_def_L24stri} comes from $24 \cdot \pi_{24}\MString$. By Fact \ref{fact_Wit_image}, 
we see that by replacing the string nullbordism $U^{\stri}_{\bar{\kappa}\nu}$ of $\bar{K} \times S^3_\Lie$ if necessary, we can achieve the desired equality \eqref{eq_Wit_L24stri}. 
\end{proof}

From now on, we use $U_{S^3\bar{K}}^\stri$ satisfying Lemma \ref{lem_Wit_L24stri} to define manifolds $L_{24}^\stri$, $L_{24}^\stri$ and $L_{25}^\stri$ in the above construction. 
The bordism classes of those manifolds are related as follows. 
\begin{claim}\label{claim_relation_L}
    We have
    \begin{align}
        [L_{25}^\stri] &= [S^1_\Lie] \cdot [L_{24}^\spin] \in \pi_{25}\MSpin, \label{eq_relation_L_1}\\
        24[L_{24}^\spin] &= [L_{24}^\stri] \in \pi_{24}\MSpin,\label{eq_relation_L_2} \\
        24[L_{25}^\stri] &= [S^1_\Lie] \cdot [L_{24}^\stri] \in \pi_{25}\MString. \label{eq_relation_L_3}
    \end{align}
\end{claim}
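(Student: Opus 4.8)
The three relations all say that certain Toda bracket manifolds, built from the same ingredients, agree as bordism classes. My plan is to prove each of them by exhibiting an explicit bordism between the two sides, constructed by gluing together the chosen nullbordisms in a symmetric fashion; this is exactly the kind of ``associativity/shuffle'' identity for Toda brackets that the manifold-level construction in Appendix~\ref{app:Toda} is designed to make geometric. Throughout I work modulo the indeterminacy of the relevant Toda brackets (which is harmless: the groups $\pi_{25}\MSpin$, $\pi_{24}\MSpin$, $\pi_{25}\MString$ in which the equalities are asserted are the honest homotopy groups, so I must check the equalities on the nose, but the indeterminacies of the brackets I am comparing map to the \emph{same} subgroup, and I will track this).

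For \eqref{eq_relation_L_1}: $[L_{25}^\stri]$ is represented by $U_{S^1S^3}^\fr \times \bar K \cup \overline{S^1_\Lie \times U_{S^3\bar K}^\stri}$, while $[S^1_\Lie]\cdot[L_{24}^\spin]$ is represented by $S^1_\Lie \times \big(U_{S^3}^\spin \times \bar K \cup \overline{U_{S^3\bar K}^\stri}\big)$. The two differ only in the first factor: $U_{S^1 S^3}^\fr$ versus $S^1_\Lie \times U_{S^3}^\spin$, both of which are (spin, indeed stably framed on one side) nullbordisms of $S^1_\Lie \times S^3_\Lie$. Gluing these two nullbordisms of $S^1_\Lie \times S^3_\Lie$ along their common boundary produces a closed spin manifold $W$ with $W \times \bar K$ serving as (the nontrivial part of) a bordism between the two representatives; the point is that $W$ bounds after crossing with $\bar K$, or more precisely the difference class is $[W]\cdot[\bar K]$ where $[W] \in \pi_5\MString$ (it inherits a string structure because $S^1_\Lie$ and $U_{S^3}^\spin$ can be taken string-compatibly on the relevant part), and such a class lies in the indeterminacy $[S^1_\Lie]\cdot\pi_{24}\MString$ we are allowed to quotient by. I would write this out as a single explicit closed $(25+1)$-manifold-with-corners realizing the bordism.

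For \eqref{eq_relation_L_2}: $24[L_{24}^\spin]$ is $24$ disjoint copies of $U_{S^3}^\spin\times\bar K \cup \overline{U_{S^3\bar K}^\stri}$, which I may reassemble as $\big(24\,U_{S^3}^\spin\big)\times\bar K \cup \overline{24\,U_{S^3\bar K}^\stri}$; meanwhile $[L_{24}^\stri] = U_{24S^3}^\fr\times\bar K \cup \overline{24\,U_{S^3\bar K}^\stri}$. Again the two agree except that $24\,U_{S^3}^\spin$ is replaced by $U_{24S^3}^\fr$, and both are spin nullbordisms of $24\,S^3_\Lie$; their union is a closed spin manifold $V$ with $[V]\in\pi_4\MSpin$, and $[V]\cdot[\bar K]$ — the difference class — lies in $\pi_4\MSpin\cdot[\bar K]$, which is precisely the indeterminacy of the bracket $\langle[\bar K],[S^3_\Lie],1\rangle_{\MSpin/\MString}$ that we quotiented by in defining $[L_{24}^\spin]$ as an element of $\pi_{24}\MSpin/\pi_{24}\MString$. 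Since \eqref{eq_relation_L_2} is asserted in $\pi_{24}\MSpin$ itself I must be slightly more careful: I use that $[L_{24}^\stri]$ already has a fixed string (hence spin) representative, and that Lemma~\ref{lem_Wit_L24stri} pins down $U_{S^3\bar K}^\stri$, so that both sides are genuinely equal once $V$ is chosen to bound in $\MSpin$ — which it does, as $\pi_4\MSpin\cdot[\bar K] \subset \pi_{24}\MSpin$ is exactly the subgroup generated by such products, and I can absorb it. The relation \eqref{eq_relation_L_3} is proved identically to \eqref{eq_relation_L_2}: multiply \eqref{eq_relation_L_1}'s construction by $24$, or equivalently cross \eqref{eq_relation_L_2} with $S^1_\Lie$, using $24[L_{25}^\stri]$ versus $[S^1_\Lie]\cdot 24[L_{24}^\spin] = [S^1_\Lie]\cdot[L_{24}^\stri]$ and the already-established \eqref{eq_relation_L_1}.

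The main obstacle I anticipate is bookkeeping the tangential structures and the corner strata carefully enough that the glued manifolds genuinely carry the claimed (string, spin, or framed) structures and that the ``difference classes'' land in the asserted indeterminacy subgroups rather than something larger — in other words, making precise the slogan ``a Toda bracket manifold is well-defined modulo the indeterminacy'' at the manifold level, which is exactly the content of Appendix~\ref{app:Toda}. Once the general gluing lemma there (Proposition~\ref{prop_Toda_htpy=mfd} and its surroundings) is invoked, each of \eqref{eq_relation_L_1}--\eqref{eq_relation_L_3} should reduce to the corresponding elementary identity among Toda brackets in homotopy (shuffle of $\langle a,b,1\rangle$, multiplicativity in the first slot, and $\langle 24,-,-\rangle$ versus $24\langle 1,-,-\rangle$), transported across the Pontryagin--Thom correspondence.
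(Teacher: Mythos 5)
Your overall approach---cut-and-paste gluing, exhibiting explicit bordisms between the two sides---is the same as the paper's, and your identification of the ``difference'' manifold in each case is correct. But there are real problems of two kinds.

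\emph{Minor, repairable errors in (1) and (2).} In \eqref{eq_relation_L_1} you assert $[W]\in\pi_5\MString$ on the grounds that $U_{S^3}^\spin$ ``can be taken string-compatibly.'' It cannot: $[S^3_\Lie]$ is nonzero in $\pi_3\MString\simeq\Z/24$, so $S^3_\Lie$ has no string nullbordism, and $W = U_{S^1S^3}^\fr\cup\overline{S^1_\Lie\times U_{S^3}^\spin}$ is only spin. Likewise, your repeated appeal to ``the indeterminacy we are allowed to quotient by'' is a distraction: the Claim asserts honest equalities in $\pi_{25}\MSpin$, $\pi_{24}\MSpin$, $\pi_{25}\MString$, not equalities modulo Toda bracket indeterminacy. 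What actually makes (1) and (2) work is that the difference manifold has $\bar K$ as a factor and $[\bar K]=0\in\pi_{20}\MSpin$ (Construction~\ref{const_bordism}), so the difference vanishes on the nose. (For (1) you could alternatively use $\pi_5\MSpin=0$, but the paper's route via $[\bar K]=0$ is the one that generalizes to (2).)

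\emph{A genuine gap in (3).} Your proposal to derive \eqref{eq_relation_L_3} by ``multiplying (1)'s construction by $24$'' or ``crossing (2) with $S^1_\Lie$'' fails, because the bordisms $\mathcal{W}^\spin$ and $\mathcal{Y}^\spin$ used in (1) and (2) are built from the spin filling $U_{\bar K}^\spin$ of $\bar K$. Since $[\bar K]\ne 0\in\pi_{20}\MString$, those bordisms are not string bordisms and cannot be promoted to string bordisms. Your argument therefore only yields $24[L_{25}^\stri]=[S^1_\Lie]\cdot[L_{24}^\stri]$ in $\pi_{25}\MSpin$, whereas \eqref{eq_relation_L_3} asserts it in $\pi_{25}\MString$, and the forgetful map $\pi_{25}\MString\to\pi_{25}\MSpin$ is not injective in general. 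The paper (Construction~\ref{const_bordism_stri}) avoids this by noting that the difference manifold is $\bigl(24U_{S^1S^3}^\fr\cup\overline{S^1_\Lie\times U_{24S^3}^\fr}\bigr)\times\bar K$, whose first factor is genuinely framed and realizes the Toda bracket $\langle 24, [S^3_\Lie], [S^1_\Lie]\rangle_{\mathbb{S}/\mathbb{S}}\in\pi_5\mathbb{S}=0$. That is the fact you need for (3); it is independent of (1) and (2).
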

We need to construct the bordisms proving Claim \ref{claim_relation_L} explicitly, as we do below. The equations \eqref{eq_relation_L_1}, \eqref{eq_relation_L_2} and \eqref{eq_relation_L_3} follow from Constructions \ref{const_bordism} (1), \ref{const_bordism_spin_y} (1) and \ref{const_bordism_stri} (1), respectively. 
 
\begin{const}[{spin bordisms $\mathcal{W}^\spin$, $\widetilde{\mathcal{W}}^{\spin} $ and $\mathcal{U}^\spin$}]\label{const_bordism}
    \begin{enumerate}
        \item We define a spin bordism
        \begin{align}
            \mathcal{W}^{\spin} \colon L_{25}^\stri \to S^1_\Lie \times L_{24}^\spin
        \end{align} as follows. 
        By definitions \eqref{eq_def_L25stri} and \eqref{eq_def_L24spin}, it is enough to give a spin-filling of
        \begin{align}
            \left( U_{S^1S^3}^\fr \cup_{S^1_\Lie\times S^3_\Lie} \overline{S_1 \times U^\spin_\nu}\right) \times \bar{K}. 
        \end{align}
        We do this by choosing a spin-filling $U_{\bar{K}}^\spin$ of $\bar{K}$, which is possible because $[\bar{K}] = 0 \in \pi_{20}\MSpin$. 

        \item Using $\mathcal{W}^{\spin}$, we define the spin bordism $\widetilde{\mathcal{W}}^{\spin} \colon (L_{25}^\stri)^{\times 3} \to (S^1_\Lie \times L_{24}^\spin)$ by the following composition of spin bordisms. 
        \begin{align}
            \widetilde{\mathcal{W}}^{\spin} \colon
             (L_{25}^\stri)^{\times 3} &\xrightarrow{\mathcal{W}^\spin \times \id \times \id} (S^1_\Lie \times L_{24}^\spin) \times (L_{25}^\stri)^{\times 2} \\
             &\xrightarrow{\id \times \mathcal{W}^\spin \times \id }(S^1_\Lie \times L_{24}^\spin)^{\times 2} \times (L_{25}^\stri) \\
             &\xrightarrow{\id \times \id \times \mathcal{W}^\spin} (S^1_\Lie \times L_{24}^\spin)^{\times 3}. 
        \end{align}

        \item Recall the equality $[S^1_\Lie]^3 = 12[S^3_\Lie]$ in $\pi_3\MString$. Since we have \eqref{diag_MSpin/MString_deg4}, we can choose a spin nullbordism $U_{(S^1)^{\times 3}}^\spin$ of $(S^1_\Lie)^{\times 3}$ so that
        \begin{align}\label{eq_def_U_eta^3}
            \left[U_{(S^1)^{\times 3}}^\spin, (S^1_\Lie)^{\times 3}\right] = 12[D^4, S^3_\Lie] \in \pi_4\MSpin/\MString. 
        \end{align}
        Using that, we define a spin nullbordism $\mathcal{U}^\spin$ of $(L_{25}^\stri)^{\times 3}$ by the following composition of spin bordisms. 
        \begin{align}
            \mathcal{U}^\spin := (L_{25}^\stri)^{\times 3} \xrightarrow{\widetilde{\mathcal{W}}^\spin} (S^1_\Lie \times L_{24}^\spin) \simeq (S^1_\Lie)^{\times 3} \times (L_{24}^\spin)^{\times 3} 
            \xrightarrow{U_{(S^1)^{\times 3}}^\spin \times (\id)^{\times 3}} \varnothing. 
        \end{align}
    \end{enumerate}
\end{const}

\begin{const}[{spin bordisms $\mathcal{Y}^{\spin}$ and $\widetilde{\mathcal{Y}}^\spin$}]\label{const_bordism_spin_y}
    \begin{enumerate}
        \item We define a spin bordism
    \begin{align}
        \mathcal{Y}^{\spin} \colon 24 L_{24}^\spin \to  L_{24}^\stri
    \end{align}
    as follows. By definitions \eqref{eq_def_L24spin} and \eqref{eq_def_L24stri}, it is enough to give a spin-filling of
    \begin{align}
        \left( 24 U_{S^3}^\spin \cup_{24S^3_\Lie }  \overline{U_{24S^3}^\fr}\right) \times \bar{K}. 
    \end{align}
    We do this by using the same string-filling $U_{\bar{K}}^\spin$ of $\bar{K}$ as the one used in Construction \ref{const_bordism}. 
    \item Using $\mathcal{Y}^\spin$, we define a spin bordism
    \begin{align}
        \widetilde{\mathcal{Y}}^\spin \colon \left(24 L_{24}^\spin\right)^{\times 3} \to \left( L_{24}^\stri \right)^{\times 3}
    \end{align}
    by the same composition as we did to produce $\widetilde{\mathcal{W}}^{\spin}$ out of $\mathcal{W}^{\spin}$ in Construction \ref{const_bordism}. 
    \end{enumerate}
\end{const}

\begin{const}[{string bordisms $\mathcal{X}^{\stri}$ and $\widetilde{\mathcal{X}}^\stri$}]\label{const_bordism_stri}
\begin{enumerate}
    \item We define a string bordism
    \begin{align}
        \mathcal{X}^{\stri} \colon 24 L_{25}^\stri \to S^1_\Lie \times L_{24}^\stri
    \end{align}
    as follows. By definitions \eqref{eq_def_L24stri} and \eqref{eq_def_L25stri}, it is enough to give a string-filling of
    \begin{align}\label{eq_filling_Xstri}
        \left( 24 U_{S^1S^3}^\fr \cup_{24S^3_\Lie \times S^1_\Lie} S^1_\Lie \times U_{24S^3}^\fr\right) \times \bar{K}. 
    \end{align}
    Note that the $5$-dimensional stably framed manifold $ \left( 24 U_{S^1S^3}^\fr \cup_{24S^3_\Lie \times S^1_\Lie} S^1_\Lie \times U_{24S^3}^\fr\right)$ realizes the Toda bracket $\left \langle 24, [S^3_\Lie], [S^1_\Lie]\right\rangle_{\mathbb{S}/\mathbb{S}}$, which is necessarily zero because $\pi_5 \mathbb{S}=0$. 
    Thus we choose its stably framed nullbordism $U_{\langle 24, S^3, S^1 \rangle}^\fr$ and use the induced string-filling of \eqref{eq_filling_Xstri}. 
    
    \item Using $\mathcal{X}^\stri$, we define a string bordism
    \begin{align}
        \widetilde{\mathcal{X}}^\stri \colon \left(24 L_{25}^\stri\right)^{\times 3} \to \left( S^1_\Lie \times L_{24}^\stri \right)^{\times 3}
    \end{align}
    by the same composition as we did to produce $\widetilde{\mathcal{W}}^{\spin}$ out of $\mathcal{W}^{\spin}$ in Construction \ref{const_bordism}. 
\end{enumerate}
   
\end{const}

The bordisms constructed above are related as follows. 
\begin{lem}\label{lem_compare_bordism}
    The $26$-dimensional spin manifold given by the following cyclic composition of bordisms
    \begin{align}
        \xymatrix{
        24L_{25}^\stri \ar[r]^-{24\mathcal{W}^\spin} & 24 S^1_\Lie \times L_{24}^\spin \ar[ld]^-{S^1_\Lie \times \mathcal{Y}^\spin}  \\
        S^1_\Lie \times L_{24}^\stri \ar[u]^-{\overline{\mathcal{X}^\stri}}&
        }
    \end{align}
    is spin-nullbordant. 
    Equivalently, the two bordisms
    \begin{align}\label{eq_compare_bordism}
        \mathcal{X}^\stri \mbox{ and } \left( S^1_\Lie\times \mathcal{Y}^\spin \right)\circ 24\mathcal{W}^\spin \colon 24L_{25}^\stri \to S^1_\Lie \times L_{24}^\stri
    \end{align}
    are bordant as spin bordisms. 
\end{lem}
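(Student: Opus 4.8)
The plan is to produce an explicit spin nullbordism of the closed spin $26$-manifold
\begin{align}
    Z \;:=\; \Big(\big(S^1_\Lie\times\mathcal{Y}^\spin\big)\circ 24\mathcal{W}^\spin\Big)\;\cup_{\,24L_{25}^\stri\,\sqcup\,(S^1_\Lie\times L_{24}^\stri)\,}\;\overline{\mathcal{X}^\stri}
\end{align}
obtained from the cyclic composition in the statement; giving such a nullbordism is the same as giving a spin bordism between $\mathcal{X}^\stri$ and $\big(S^1_\Lie\times\mathcal{Y}^\spin\big)\circ 24\mathcal{W}^\spin$ relative to their common boundary, which is what \eqref{eq_compare_bordism} asks for.

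First I would unravel Constructions \ref{const_bordism}(1), \ref{const_bordism_spin_y}(1) and \ref{const_bordism_stri}(1) together with \eqref{eq_def_L24stri}--\eqref{eq_def_L25stri}, writing $Z$ as a gluing of the fixed atomic pieces used throughout: the closed manifolds $S^1_\Lie, S^3_\Lie, \bar K$; the string nullbordism $F:=U^\stri_{S^3\bar K}$ of $S^3_\Lie\times\bar K$; the low-dimensional nullbordisms $U^\spin_{S^3}$, $U^\fr_{24S^3}$, $U^\fr_{S^1S^3}$; the spin nullbordism $U^\spin_{\bar K}$ of $\bar K$; and the framed nullbordism $U^\fr_{\langle 24,S^3,S^1\rangle}$ of the $5$-manifold realizing $\langle 24,[S^3_\Lie],[S^1_\Lie]\rangle_{\mathbb{S}/\mathbb{S}}$. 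The design point, built into these Constructions, is that $\mathcal{W}^\spin$ and $\mathcal{Y}^\spin$ are filled using the \emph{same} choice of $U^\spin_{\bar K}$, so the codimension-zero pieces of $24\mathcal{W}^\spin$ and $S^1_\Lie\times\mathcal{Y}^\spin$ carrying $U^\spin_{\bar K}$ glue along their $\bar K$-faces without obstruction, whereas $\mathcal{X}^\stri$ instead carries $U^\fr_{\langle 24,S^3,S^1\rangle}\times\bar K$ on the corresponding region.

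The key step is then a regrouping of the pieces of $Z$. On passing to the cyclic composite, the "low-dimensional cores" of $\mathcal{W}^\spin$ and $\mathcal{Y}^\spin$ — the closed $5$-manifolds $P:=U^\fr_{S^1S^3}\cup_{S^1_\Lie\times S^3_\Lie}\overline{S^1_\Lie\times U^\spin_{S^3}}$ and $P':=24\,U^\spin_{S^3}\cup_{24S^3_\Lie}\overline{U^\fr_{24S^3}}$ — fit together precisely by excising the shared $S^1_\Lie\times U^\spin_{S^3}$ region, and what remains is, up to the orientation conventions of Appendix \ref{app:Toda}, the closed framed $5$-manifold $Q:=\partial U^\fr_{\langle 24,S^3,S^1\rangle}=24\,U^\fr_{S^1S^3}\cup_{24(S^1_\Lie\times S^3_\Lie)}(S^1_\Lie\times U^\fr_{24S^3})$; this is exactly why that nullbordism was chosen in Construction \ref{const_bordism_stri}. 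Consequently all the $U^\spin_{\bar K}$-carrying pieces of $24\mathcal{W}^\spin$ and $S^1_\Lie\times\mathcal{Y}^\spin$ assemble into $Q\times U^\spin_{\bar K}$; the $U^\fr_{\langle 24,S^3,S^1\rangle}$-carrying piece of $\overline{\mathcal{X}^\stri}$ is $\overline{U^\fr_{\langle 24,S^3,S^1\rangle}\times\bar K}$; and every remaining piece is a product collar built from $F$ (the summand built from $F$ occurs identically in the source and target of each of $\mathcal{W}^\spin,\mathcal{Y}^\spin,\mathcal{X}^\stri$, hence contributes only $(\,\cdot\,)\times[0,1]$ in each, and thus $(\,\cdot\,)\times S^1$ after cyclic composition). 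Absorbing these collars, one obtains a spin diffeomorphism
\begin{align}
    Z \;\cong\; \big(U^\fr_{\langle 24,S^3,S^1\rangle}\times\bar K\big)\;\cup_{\,Q\times\bar K\,}\;\overline{\,Q\times U^\spin_{\bar K}\,}.
\end{align}

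Finally I would take $W:=U^\fr_{\langle 24,S^3,S^1\rangle}\times U^\spin_{\bar K}$, a compact spin $27$-manifold (a product of a framed and a spin manifold-with-boundary), and note that by the boundary formula for a product of manifolds-with-boundary,
\begin{align}
    \partial W \;=\; \big(\partial U^\fr_{\langle 24,S^3,S^1\rangle}\big)\times U^\spin_{\bar K}\;\cup_{\,Q\times\bar K\,}\;U^\fr_{\langle 24,S^3,S^1\rangle}\times\big(\partial U^\spin_{\bar K}\big)\;=\;\big(Q\times U^\spin_{\bar K}\big)\cup_{Q\times\bar K}\big(U^\fr_{\langle 24,S^3,S^1\rangle}\times\bar K\big),
\end{align}
which is $Z$ with its spin orientation; the $F$-collars are handled by attaching the corresponding solid collars $(\,\cdot\,)\times F\times D^2$ to $W$. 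Hence $Z$ bounds a spin $27$-manifold, which proves the Lemma. The main obstacle is purely the corners bookkeeping in the regrouping step: one must verify carefully that the $U^\spin_{\bar K}$-pieces of $24\mathcal{W}^\spin$ and $S^1_\Lie\times\mathcal{Y}^\spin$ really glue into $Q\times U^\spin_{\bar K}$ along the face $24(S^1_\Lie\times U^\spin_{S^3})\times\bar K$ of $S^1_\Lie\times 24L_{24}^\spin$, and that the $F$-collars assemble into honest product regions; both follow once one records, using Proposition \ref{prop_Toda_htpy=mfd} and Fact \ref{fact_kappabar}, that the same $U^\spin_{\bar K}$ is reused and that $U^\fr_{\langle 24,S^3,S^1\rangle}$ nullbounds precisely the $5$-manifold $Q$ produced by composing the cores of $\mathcal{W}^\spin$ and $\mathcal{Y}^\spin$.
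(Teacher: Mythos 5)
Your proposal is correct and follows essentially the same route as the paper: both arguments identify $\mathcal{X}^\stri$ and $(S^1_\Lie\times\mathcal{Y}^\spin)\circ 24\mathcal{W}^\spin$ as two fillings of $Q\times\bar K$ (namely $U^\fr_{\langle 24,S^3,S^1\rangle}\times\bar K$ and $Q\times U^\spin_{\bar K}$, using that the same $U^\spin_{\bar K}$ is reused in $\mathcal{W}^\spin$ and $\mathcal{Y}^\spin$ and that the $U^\stri_{S^3\bar K}$-component is fixed throughout), and then exhibit the product $U^\fr_{\langle 24,S^3,S^1\rangle}\times U^\spin_{\bar K}$ as the spin bordism-with-corners between them. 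The only cosmetic difference is that you phrase it as a nullbordism of the closed $26$-manifold obtained from the cyclic composite rather than as a bordism between bordisms rel boundary.
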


\begin{proof}
    Recall from the definitions \eqref{eq_def_L24spin}, \eqref{eq_def_L24stri} and \eqref{eq_def_L25stri} that we have the following gluing construction of the manifolds in question, 
    \begin{align}\label{eq_mfds_in_question}
        24L_{25}^\stri &= 24U_{S^1S^3}^\fr \times \bar{K} \cup_{24S^1_\Lie \times S^3_\Lie \times \bar{K} } \overline{24 S^1_\Lie \times U_{S^3\bar{K}}^\stri} \\
        24 S^1_\Lie \times L_{24}^\spin &= 24 S^1_\Lie \times U_{S^3}^\spin \times \bar{K} \cup_{24S^1_\Lie \times S^3_\Lie \times \bar{K} } \overline{24 S^1_\Lie \times U_{S^3\bar{K}}^\stri} \\
        S^1_\Lie \times L_{24}^\stri &= S^1_\Lie\times U_{24S^3}^\fr \times \bar{K} \cup_{24 S^1_\Lie \times S^3_\Lie \times \bar{K}} \overline{24 S^1_\Lie \times U^\stri_{S^3 \bar{K}}}. 
    \end{align}
    Also recall from Constructions \ref{const_bordism}, \ref{const_bordism_spin_y} and \ref{const_bordism_stri} that each bordism $24\mathcal{W}^\spin$, $\left( S^1_\Lie\times \mathcal{Y}^\spin \right)$ and $\mathcal{X}^\stri$ fixes the second component $\overline{24 S^1_\Lie \times U_{S^3\bar{K}}^\stri}$ of the manifolds in \eqref{eq_mfds_in_question}. 
    Indeed, the bordisms are constructed by taking nullbordisms of manifolds formed by gluing the first factor of each two of the manifolds in \eqref{eq_mfds_in_question}. 

    We compare two spin bordisms \eqref{eq_compare_bordism}, which are identified with two spin-fillings of 
    \begin{align}
         \left( 24 U_{S^1S^3}^\fr \cup_{24S^3_\Lie \times S^1_\Lie} S^1_\Lie \times U_{24S^3}^\fr\right) \times \bar{K}. 
    \end{align}
    $\mathcal{X}^\stri$ corresponds to the string filling
    \begin{align}
        U_{\langle 24, S^3, S^1 \rangle}^\fr \times \bar{K}, 
    \end{align}
    whereas $\left( S^1_\Lie\times \mathcal{Y}^\spin \right)\circ 24\mathcal{W}^\spin$ corresponds to the spin filling
    \begin{align}
        \left( 24 U_{S^1S^3}^\fr \cup_{24S^3_\Lie \times S^1_\Lie} S^1_\Lie \times U_{24S^3}^\fr\right) \times U_{\bar{K}}^\spin. 
    \end{align}
    Thus, the spin manifold with corners
    \begin{align}
        U_{\langle 24, S^3, S^1 \rangle}^\fr \times U_{\bar{K}}^\spin
    \end{align}
    gives a spin bordism between the two bordisms \eqref{eq_compare_bordism} as desired. This finishes the proof of Lemma \ref{lem_compare_bordism}. 
\end{proof}

The following lemmas are the key in the proof of Proposition \ref{prop_lowerbound_12}.

\begin{lem}\label{lem_76_relative_mfd}
    We have the following equality in $\pi_{76}\MSpin/\MString$. 
    \begin{align}\label{eq_lem_76_relative_mfd}
        24^3 \cdot  \left[\mathcal{U}^\spin, (L_{25}^\stri)^{\times 3}\right] = \left[L_{24}^\stri\right]^3 \cdot \left[U_{(S^1)^{\times 3}}^\spin, (S^1_\Lie )^{\times 3}\right] . 
    \end{align}
\end{lem}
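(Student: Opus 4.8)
The plan is to assemble the identity \eqref{eq_lem_76_relative_mfd} from the explicit bordisms $\mathcal{U}^\spin$, $\mathcal{W}^\spin$, $\mathcal{X}^\stri$, $\mathcal{Y}^\spin$ constructed above, tracking everything in the relative bordism group $\pi_{76}\MSpin/\MString$, i.e.\ as pairs $[M,N]$ of a spin manifold with string boundary. The left-hand side $24^3\cdot[\mathcal{U}^\spin,(L_{25}^\stri)^{\times 3}]$ should be rewritten by first scaling: since $\mathcal{U}^\spin$ is the composite $(L_{25}^\stri)^{\times 3}\xrightarrow{\widetilde{\mathcal{W}}^\spin}(S^1_\Lie\times L_{24}^\spin)^{\times 3}\xrightarrow{U_{(S^1)^{\times 3}}^\spin\times(\id)^{\times 3}}\varnothing$, multiplying by $24^3$ distributes over the three $\mathcal{W}^\spin$-factors and over the relation $24[L_{24}^\spin]=[L_{24}^\stri]\in\pi_{24}\MSpin$ from Claim~\ref{claim_relation_L}. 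First I would upgrade the chain of spin bordisms $24\mathcal{W}^\spin$, $S^1_\Lie\times\mathcal{Y}^\spin$, $\mathcal{X}^\stri$ to the relative setting and invoke Lemma~\ref{lem_compare_bordism}, which says precisely that $\mathcal{X}^\stri$ and $(S^1_\Lie\times\mathcal{Y}^\spin)\circ 24\mathcal{W}^\spin$ agree as spin bordisms $24L_{25}^\stri\to S^1_\Lie\times L_{24}^\stri$. Using $\widetilde{\mathcal{Y}}^\spin$ and $\widetilde{\mathcal{X}}^\stri$ (the threefold composites), this upgrades to an equality of the relevant $26\cdot 3 = 78$-... no: the relevant statement pairs $(L_{25}^\stri)^{\times 3}$ (dimension $75$) against a spin filling, yielding a class in $\pi_{76}\MSpin/\MString$; the point is that the two ways of capping $(24L_{25}^\stri)^{\times 3}$ — one through $\widetilde{\mathcal{X}}^\stri$ landing in $(S^1_\Lie\times L_{24}^\stri)^{\times 3}$, the other through $24^3\widetilde{\mathcal{W}}^\spin$ landing in $(S^1_\Lie\times L_{24}^\spin)^{\times 3}$ and then through $\widetilde{\mathcal{Y}}^\spin$ — are spin-bordant by the threefold product of the corner manifold $U_{\langle 24,S^3,S^1\rangle}^\fr\times U_{\bar K}^\spin$ from the proof of Lemma~\ref{lem_compare_bordism}.

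Concretely, I would run the following steps. (1) Fill the boundary $(S^1_\Lie\times L_{24}^\stri)^{\times 3}$: since $L_{24}^\stri$ is string and $S^1_\Lie$ represents $\eta$, the product $S^1_\Lie\times L_{24}^\stri$ is string-nullbordant in degree $25$ only up to the factor controlled by $[S^1_\Lie]\cdot\pi_{24}\MString$, but here we only need a \emph{spin} filling relative to a \emph{string} boundary, and the string structure on $L_{24}^\stri$ itself provides the relevant relative class. (2) Identify the class $[U_{(S^1)^{\times 3}}^\spin,(S^1_\Lie)^{\times 3}] = 12[D^4,S^3_\Lie]\in\pi_4\MSpin/\MString$ as in \eqref{eq_def_U_eta^3}, so the right-hand side of \eqref{eq_lem_76_relative_mfd} is $[L_{24}^\stri]^3\cdot 12[D^4,S^3_\Lie]$, a product of a degree-$72$ \emph{string} class with a degree-$4$ relative class, using the $\MString$-module structure on $\MSpin/\MString$. (3) Chase the gluing decompositions \eqref{eq_mfds_in_question} exactly as in Lemma~\ref{lem_compare_bordism}: each of $24\mathcal{W}^\spin$, $S^1_\Lie\times\mathcal{Y}^\spin$, $\mathcal{X}^\stri$ fixes the common second component $\overline{24S^1_\Lie\times U_{S^3\bar K}^\stri}$, so the entire $78$-dimensional cyclic composite of bordisms reduces to a product of the ``first-factor'' fillings with $\bar K$, and the discrepancy between the $\widetilde{\mathcal{X}}^\stri$-route and the $\widetilde{\mathcal{Y}}^\spin\circ 24^3\widetilde{\mathcal{W}}^\spin$-route is exactly $(U_{\langle 24,S^3,S^1\rangle}^\fr\times U_{\bar K}^\spin)^{\times 3}$-type spin manifolds with corners, which are nullbordant. (4) Read off that after capping, $24^3[\mathcal{U}^\spin,(L_{25}^\stri)^{\times 3}]$ and $[L_{24}^\stri]^3\cdot[U_{(S^1)^{\times 3}}^\spin,(S^1_\Lie)^{\times 3}]$ represent the same relative bordism class.

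The main obstacle I anticipate is \textbf{bookkeeping of the relative (manifold-with-boundary) bordism classes and the module structures}, rather than any single deep input. Specifically: making precise in what group each intermediate object lives (absolute $\pi_*\MString$ vs. $\pi_*\MSpin$ vs. relative $\pi_*\MSpin/\MString$), ensuring the threefold-composite constructions $\widetilde{\mathcal{W}}^\spin$, $\widetilde{\mathcal{X}}^\stri$, $\widetilde{\mathcal{Y}}^\spin$ interact correctly with the relative class $[U_{(S^1)^{\times 3}}^\spin,(S^1_\Lie)^{\times 3}]$ carried at the very end of $\mathcal{U}^\spin$, and verifying that the factor $24^3$ really does come out as stated — one $24$ from each copy of the relation $24[L_{24}^\spin]=[L_{24}^\stri]$, which is where Claim~\ref{claim_relation_L}\eqref{eq_relation_L_2} and its ``cube'' via $\widetilde{\mathcal{Y}}^\spin$ enter. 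Once the groups and the $\MString$-linearity are set up carefully, the geometric heart is just Lemma~\ref{lem_compare_bordism} applied three times in parallel, so the proof should be short modulo this setup.
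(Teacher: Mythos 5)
Your proposal is correct and follows essentially the same route as the paper: the paper's proof assembles a $77$-dimensional $\langle 2\rangle$-manifold $\mathcal{V}=\mathcal{V}_L\cup\mathcal{V}_R$ in which $\mathcal{V}_L$ is exactly your threefold application of Lemma~\ref{lem_compare_bordism} and $\mathcal{V}_R=U_{(S^1)^{\times 3}}^\spin\times\widetilde{\mathcal{Y}}^\spin$ is precisely the ``capping/bookkeeping'' step you defer, converting the tail $24^3U_{(S^1)^{\times 3}}^\spin\times(L_{24}^\spin)^{\times 3}$ of $24^3\mathcal{U}^\spin$ into $U_{(S^1)^{\times 3}}^\spin\times(L_{24}^\stri)^{\times 3}$. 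The only slip is dimensional: the bordism between the two routes is $77$-, not $78$-dimensional.
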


\begin{proof}
Recall that equalities in relative bordism groups are given by manifolds with corners, more precisely $\langle 2\rangle$-manifolds in the sense of \cite{Janich1968}. 
The data of $\langle 2 \rangle$-manifold $M$ consists of a decomposition $\del M= \del_0 M \cup \del_1 M$ of its boundary so that $\del_{01}M := \del_0 M \cap \del_1 M $ is the corner of $M$. 
In order to prove \eqref{eq_lem_76_relative_mfd}, we need to construct a $77$-dimensional $\langle 2 \rangle$-manifold $\mathcal{V}$ so that
\begin{itemize}
    \item $\mathcal{V}$ is equipped with a spin structure and on $\del_0 \mathcal{V}$ it is lifted to a string structure, 
    \item We have an identification of relative spin/string manifolds, 
    \begin{align}
        \left(\del_1 \mathcal{V}, \del_{01}\mathcal{V}\right)
        \simeq 24^3\left(\mathcal{U}^\spin, (L_{25}^\stri)^{\times 3}\right) \sqcup \overline{ (L_{24}^\stri)^3 \times \left(U_{(S^1)^{\times 3}}^\spin, (S^1_\Lie )^{\times 3}\right)}. 
    \end{align}
    In particular, $\del_{01}\mathcal{V}$ is a string bordism from $\left(24 L_{24}^\stri\right)^{\times 3}$ to $\left( L_{24}^\stri \times S^1_\Lie \right)^{\times 3}$. 
\end{itemize}
We construct such $\mathcal{V}$ as follows. First recall that $\mathcal{U}^\spin$ is given by the following composition of spin bordisms, 
\begin{align}
    \mathcal{U}^\spin := (L_{25}^\stri)^{\times 3} \xrightarrow{\widetilde{\mathcal{W}}^\spin} (S^1_\Lie \times L_{24}^\spin) \simeq (S^1_\Lie)^{\times 3} \times (L_{24}^\spin)^{\times 3} 
            \xrightarrow{U_{(S^1)^{\times 3}}^\spin \times (\id)^{\times 3}} \varnothing.
\end{align}

Accordingly, we define
\begin{align}\label{eq_proof_76_relative_mfd}
    \mathcal{V} = \mathcal{V}_L \cup_{(S^1_\Lie)^{\times 3} \times \widetilde{\mathcal{Y}}^\spin} \mathcal{V}_R, 
\end{align}
where $\mathcal{V}_L$ and $\mathcal{V}_R$ are spin $\langle 2\rangle$-manifolds as follows. 
\begin{itemize}
    \item $\mathcal{V}_L$ is the $\langle 2 \rangle$-manifold realizing the bordism between two spin bordisms
    \begin{align}
        \widetilde{\mathcal{X}}^\stri \mbox{ and } \left( S^1_\Lie\times \widetilde{\mathcal{Y}}^\spin \right)\circ 24^3 \widetilde{\mathcal{W}}^\spin \colon \left(24L_{25}^\stri\right)^{\times 3} \to \left(S^1_\Lie \times L_{24}^\stri \right)^{\times 3}, 
    \end{align}
    whose existence is guaranteed by Lemma \ref{lem_compare_bordism}. We have
    \begin{align}
        \del_0 \mathcal{V}_L &\simeq \widetilde{\mathcal{X}}^{\stri} \sqcup \overline{(S^1_\Lie)^{\times 3} \times \widetilde{\mathcal{Y}}^\spin}, \\
        \del_1 \mathcal{V}_L & \simeq \widetilde{\mathcal{W}}^\spin \sqcup
        \overline{\left( L_{24}^\stri \times S^1_\Lie \right)^{\times 3} \times [0, 1]}. 
    \end{align}

    \item $\mathcal{V}_R$ is defined by
    \begin{align}
        \mathcal{V}_R := U_{(S^1)^{\times 3}}^\spin \times \widetilde{\mathcal{Y}}^\spin. 
    \end{align}
    We have
    \begin{align}
        \del_0 \mathcal{V}_R &\simeq (S^1_\Lie)^{\times 3} \times \widetilde{\mathcal{Y}}^\spin, \\
        \del_1 \mathcal{V}_R &\simeq 24^3 U_{(S^1)^{\times 3}}^\spin \times (L_{24}^\spin)^{\times 3} \sqcup \overline{U_{(S^1)^{\times 3}}^\spin \times(L_{24}^\stri)^{\times 3}}. 
    \end{align}
\end{itemize}
We see that $\mathcal{V}$ defined by \eqref{eq_lem_76_relative_mfd} has the desired property. This completes the proof of Lemma \ref{lem_76_relative_mfd}. 
\end{proof}

\begin{lem}\label{lem_rel_Wit_76}
    We have
    \begin{align}
        \Wit_{\rel}\left(\left[\mathcal{U}^\spin, (L_{25}^\stri)^{\times 3}\right]\right)
        = E_2 \cdot \Delta^3 \mod \MF_{36}^\Q, 
    \end{align}
            where $E_2(q)$ is the second Eisenstein series with the normalization so that $E_2(q) = 1 + O(q)$. 
\end{lem}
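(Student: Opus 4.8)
The plan is to deduce the claim directly from the bordism relation of Lemma~\ref{lem_76_relative_mfd}, feeding in the absolute and relative Witten genera computed earlier. Recall that relation reads, in $\pi_{76}\MSpin/\MString$,
\begin{align}
    24^3 \cdot \left[\mathcal{U}^\spin, (L_{25}^\stri)^{\times 3}\right] = \left[L_{24}^\stri\right]^3 \cdot \left[U_{(S^1)^{\times 3}}^\spin, (S^1_\Lie )^{\times 3}\right],
\end{align}
and that by Construction~\ref{const_bordism}(3) the second right-hand factor equals $12[D^4, S^3_\Lie]\in\pi_4\MSpin/\MString$. So the right-hand side is $12\cdot[L_{24}^\stri]^3\cdot[D^4,S^3_\Lie]$, with $[L_{24}^\stri]^3\in\pi_{72}\MString$ acting through the $\MString$-module structure on $\MSpin/\MString$.

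First I would apply $\Wit_\rel$ to both sides. Since $\Wit_\rel$ is $\pi_*$ of a map of spectra it is additive, so the left-hand side becomes $24^3\,\Wit_\rel([\mathcal{U}^\spin,(L_{25}^\stri)^{\times 3}])$. For the right-hand side, the product formula of Lemma~\ref{lem_product_Wit}, applied with $[L]=[L_{24}^\stri]^3$ and $[M,N]=12[D^4,S^3_\Lie]$, gives $\Wit_\stri([L_{24}^\stri]^3)\cdot 12\,\Wit_\rel([D^4,S^3_\Lie])$. Now $\Wit_\stri$ is a ring homomorphism and $\Wit_\stri([L_{24}^\stri])=24\Delta$ by Lemma~\ref{lem_Wit_L24stri}, so $\Wit_\stri([L_{24}^\stri]^3)=(24\Delta)^3=24^3\Delta^3$; substituting $\Wit_\rel([D^4,S^3_\Lie])=E_2/12$ modulo $\MF_2^\Q$ from Lemma~\ref{lem_Ind_rel_D4S3} then shows the right-hand side equals $24^3\Delta^3\cdot 12\cdot\tfrac{E_2}{12}=24^3E_2\Delta^3$, taken modulo $24^3\Delta^3\cdot\MF_2^\Q$.

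It remains to identify this modulus and to divide by $24^3$. Since $\Delta$ is invertible in $\MF_*^\Q$ and $24^3$ is a unit of $\Q$, multiplication by $24^3\Delta^3$ carries $\MF_2^\Q$ isomorphically onto $\MF_{38}^\Q$, which by \eqref{eq_rel_Wit} is exactly the subgroup of $\Q((q))$ modulo which $\Wit_\rel$ takes its values on $\pi_{76}\MSpin/\MString$. Hence we obtain $24^3\,\Wit_\rel([\mathcal{U}^\spin,(L_{25}^\stri)^{\times 3}])=24^3E_2\Delta^3$ in $\Q((q))/\MF_{38}^\Q$, and because this target is a $\Q$-vector space, multiplication by $24^3$ on it is injective, so we may cancel the factor $24^3$ and conclude $\Wit_\rel([\mathcal{U}^\spin,(L_{25}^\stri)^{\times 3}])=E_2\Delta^3$ modulo $\MF_{38}^\Q$. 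I do not expect a genuine obstacle here: the argument is essentially bookkeeping. The one step requiring care is this final cancellation — Lemma~\ref{lem_76_relative_mfd} only produces the identity after multiplication by $24^3$, and integrally $\pi_{76}\MSpin/\MString$ may carry torsion, so it is essential to pass to the rational target of $\Wit_\rel$, where $24^3$ is invertible, before dividing; a secondary point worth spelling out is the equality $\Delta^3\MF_2^\Q=\MF_{38}^\Q$, which makes the ``$\bmod$'' that appears after the multiplication coincide with the one in the target of $\Wit_\rel$.
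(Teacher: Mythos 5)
Your argument is the same as the paper's: apply $\Wit_\rel$ to the relation of Lemma~\ref{lem_76_relative_mfd}, use the product formula of Lemma~\ref{lem_product_Wit} together with $\Wit_\stri([L_{24}^\stri])=24\Delta$, $[U_{(S^1)^{\times 3}}^\spin,(S^1_\Lie)^{\times 3}]=12[D^4,S^3_\Lie]$ and Lemma~\ref{lem_Ind_rel_D4S3}, then cancel $24^3$ in the rational target. Your extra care about where the cancellation happens and about the modulus is sound — indeed, by \eqref{eq_rel_Wit} the target for $\pi_{76}$ is $\Q((q))/\MF_{38}^\Q$, matching the weight of $E_2\Delta^3$, so your $\MF_{38}^\Q$ is the correct modulus and the paper's ``$\MF_{36}^\Q$'' appears to be a typo.
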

\begin{proof}
This follows from the following computation. 
\begin{align}
    \Wit_{\rel}\left(\left[\mathcal{U}^\spin, (L_{25}^\stri)^{\times 3}\right]\right) 
    &= \frac{1}{24^3} \Wit_{\rel}\left( [L_{24}^\stri]^3 \cdot [U_{(S^1)^{\times 3}}^\spin, (S^1_\Lie )^{\times 3}]\right) \notag \\
    &= \frac{1}{24^3} \Wit([L_{24}^\stri])^3 \cdot \Wit_{\rel}\left([U_{(S^1)^{\times 3}}^\spin, (S^1_\Lie )^{\times 3}]\right) \notag \\
    &= \frac{1}{24^3} \Wit([L_{24}^\stri])^3 \cdot 12 \Wit_{\rel}([D^4, S^3_\Lie]) \notag \\
    &= \frac{1}{24^3} \cdot (24\Delta)^3 \cdot 12 \cdot \frac{E_2}{12} \pmod{\MF_{36}^\Q} \notag \\
    &=E_2 \cdot \Delta^3 \pmod{\MF_{36}^\Q} \notag
\end{align}
Here, the first equality follows from Lemma \ref{lem_76_relative_mfd}, the second follows from the product formula in Lemma \ref{lem_product_Wit}, the third follows from \eqref{eq_def_U_eta^3}, the fourth follows from Lemma \ref{lem_Ind_rel_D4S3}.  
\end{proof}

\begin{proof}[Proof of Proposition \ref{prop_lowerbound_12}]
We use the pairing
    \begin{align}
        \langle-,-\rangle_\SQFT: \pi_{-12 \times 24}\SQFT \times \pi_{76 + 8 \times 24}\MSpin/\MString\to \bZ.
    \end{align}
We use the class $[\mathcal{U}^\spin, (L_{25}^\stri)^{\times 3}] \in \pi_{76}\MSpin/\MString$ in Construction \ref{const_bordism}. 
Moreover, by Fact \ref{fact_Wit_image}, we have a $15 \times 24$-dimensional string manifold $M_{8 \times 24}^\stri$ with $\Wit_\stri([M_{8 \times 24}^\stri]) = 3\Delta^{8}$. 
Thus we have a class
\begin{align}\label{eq_rel_mfd_total}
    [M_{8 \times 24}^\stri] \cdot \left[\mathcal{U}^\spin, (L_{25}^\stri)^{\times 3}\right]\in \pi_{8 \times 24 + 76}\MSpin/\MString. 
\end{align}
Suppose there exists $\mathcal{T} \in \pi_{-12 \times 24} \SQFT$ with $\Phi(\mathcal{T}) = k \Delta^{-12}$. 
    Then, by the formula \eqref{eq_formula_pariring} and the integrality of the pairing, together with Lemma \ref{lem_rel_Wit_76} and Lemma \ref{lem_product_Wit}, we get
    \begin{align}
        \Z &\ni \left\langle \mathcal{T}, [M_{15 \times 24}^\stri] \cdot \left[\mathcal{U}^\spin, (L_{25}^\stri)^{\times 3}\right] \right\rangle_\SQFT = \left.\frac{1}{2}\Delta \cdot k \Delta^{-12} \cdot\left(1 + O(q) \right) \cdot \Delta^3 \cdot 3\Delta^{8} \right|_{q^0} 
        = \frac{3k}{2}. 
    \end{align}
    This gives the desired result.

\end{proof}

\appendix

\section{Toda brackets via manifolds}
\label{app:Toda}
For details on Toda bracket, see the original article \cite{Toda}. 
The Toda bracket we use is the following. Let $R$ be an $E_\infty$ ring spectrum and $M$ be an $R$-module spectrum. 
Let $x, y \in \pi_* R$ and $z \in \pi_* M$. 
Assume that we have $xy = 0 \in \pi_* R$ and $yz = 0 \in \pi_* M$. 
In this setting, we get the {\it Toda bracket} 
\begin{align}\label{eq_Toda}
    \langle x, y, z \rangle_{M/R} \in \frac{\pi_{|x|+ |y| + |z| + 1}M}{\left( \pi_{|x| + |y| + 1} R \cdot z\right) + \left(x \cdot \pi_{|y| + |z| + 1}M\right)}. 
\end{align}
as follows. 

\begin{defn}[Toda brackets, \cite{Toda}]\label{def_Toda_htpy}
In the above settings,
consider the following diagram of spectra, 

\begin{align}\label{diag_Toda}
    \xymatrix{
&  & & \Sigma^{|x| + |y| + |z| + 1} \mathbb{S} \ar[d]^-{x\cdot } \ar@{.>}[ld]^-{u_{xy}} & \\
    \Sigma^{|y| + |z|} R \ar[r]^-{y \cdot}  & \Sigma^{|z|} R \ar[r] \ar[d]^-{z \cdot} & \Sigma^{|z|} Cy \ar[r] \ar@{.>}[ld]^-{u_{yz}} & \Sigma^{|y|+ |z| +1} R \ar[r]^-{y\cdot } & \Sigma^{|z|+1} R \\
    & M & & & 
    }, 
\end{align}
where $Cy$ is the cofiber of $y\cdot \colon \Sigma^{|y|}R \to R$, so that the middle row is a fiber sequence. 
Since we have $xy=0 \in \pi_*R$, we can choose a lift $u_{xy}$ as indicated in the diagram. The ambiguity of the lift is parametrized by $\pi_{|x|+|y| + 1} R$. 
Similarly, since we have $yz=0 \in \pi_{*}M$, we can choose a lift $u_{yz}$ as indicated in the diagram. The ambiguity of the lift is parametrized by $\pi_{|y| + |z| + 1}M$. 
Then we define the Toda bracket to be the following composition, 
\begin{align}
    \langle x, y, z \rangle_{M/R} := u_{yz} \circ u_{xy} \colon \Sigma^{|x| + |y| + |z| + 1} \mathbb{S} \to M. 
\end{align}
Combining the ambiguity in the choices of $u_{xy}$ and $u_{yz}$ explained above, we see that the Toda bracket is well-defined as an element in the right hand side of \eqref{eq_Toda}. 

\end{defn}
 
We are interested in the situation where $R$ and $M$ are Thom spectra of the form $R=M\mathcal{S}$ and $M=M\mathcal{S}'$, where $\mathcal{S}$ is a multiplicative structure and $\mathcal{S}'$ is an $\mathcal{S}$-module structure. 
The example appearing in this article is $(M\mathcal{S}, M\mathcal{S}') = (\MString, \MSpin)$ and $(\mathbb{S}, \MString)$. 
In this settings, we have the following geometric realization of the Toda bracket \eqref{eq_Toda}. 

\begin{defn}[Manifold-level construction of Toda brackets]\label{def_Toda_mfd}
Let us choose closed $\mathcal{S}$ manifolds $X$ and $Y$ and a closed $\mathcal{S}'$-manifold $Z$ with $x = [X]$, $y=[Y]$ and $z = [Z]$. 
Then we can choose a $\mathcal{S}$-nullbordism $U_{xy}$ of $X \times Y$ and a $\mathcal{S}'$-nullbordism $U_{yz}$ of $Y \times Z$. 
Then let us form 
\begin{align}
    W := U_{xy} \times Z \cup_{X \times Y \times Z} \overline{X \times U_{yz}}. 
\end{align}
Then $W$ is naturally a closed $\mathcal{S}'$-manifold with dimension $(|x| + |y| + |z| + 1)$. 
The ambiguity of the choices of nullbordisms $U_{xy}$ and $U_{yz}$ affects the resulting homotopy class $[W] \in \pi_{|x|+ |y| + |z| + 1} M\mathcal{S}'$ by exactly by $\left( \pi_{|x| + |y| + 1} M\mathcal{S}  \cdot z\right)$ and $\left( x \cdot \pi_{|y| + |z| + 1}M\mathcal{S}'\right)$, respectively. 
Thus the following element is well-defined, which we call the {\it manifold-level Toda bracket}, 
\begin{align}
    \langle x, y, z \rangle_{M\mathcal{S}'/M\mathcal{S}}^{\mathrm{mfd}} := [W] \in \frac{\pi_{|x|+ |y| + |z| + 1}M}{\left( \pi_{|x| + |y| + 1} R \cdot z\right) + \left(x \cdot \pi_{|y| + |z| + 1}M\right)}. 
\end{align}
\end{defn}

The above manifold-level construction of Toda bracket (Definition \ref{def_Toda_mfd}) indeed coincides the homotopy-theoretic definition of Toda bracket (Definition \ref{def_Toda_htpy}), as follows. 

\begin{prop}\label{prop_Toda_htpy=mfd}
    In the above settings, we have
    \begin{align}
        \langle x, y, z \rangle_{M\mathcal{S}'/M\mathcal{S}}= \langle x, y, z \rangle_{M\mathcal{S}'/M\mathcal{S}}^{\mathrm{mfd}}, 
    \end{align}
    where the left and right hand sides are given in Definition \ref{def_Toda_htpy} and Definition \ref{def_Toda_mfd}, respectively. 
\end{prop}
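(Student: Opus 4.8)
The plan is to give bordism-theoretic models for all the spectra and maps occurring in diagram~\eqref{diag_Toda}, and then to trace the composite $u_{yz}\circ u_{xy}$ through these models, landing on the closed manifold $W$ of Definition~\ref{def_Toda_mfd}.

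First I would record the geometric content of the ingredients. By the Pontryagin--Thom construction, multiplication by $y=[Y]$ on $M\mathcal{S}$ is represented, already at the level of Thom spectra, by the map induced from ``product with $Y$''; on homotopy groups it is $[M]\mapsto[M\times Y]$, and likewise for the $M\mathcal{S}$-module $M\mathcal{S}'$ using that $\mathcal{S}'$ is an $\mathcal{S}$-module structure. Consequently the cofiber $Cy=\mathrm{cofib}(y\cdot\colon\Sigma^{|y|}M\mathcal{S}\to M\mathcal{S})$ has a relative-bordism description: $\pi_n(Cy)$ is the group of bordism classes of compact $\mathcal{S}$-manifolds $V^n$ equipped with a trivialization $\partial V\xrightarrow{\ \cong\ }Y\times P$ for a closed $\mathcal{S}$-manifold $P^{\,n-|y|-1}$, the bordisms themselves being such manifolds-with-corners (the $\langle 2\rangle$-manifolds of \cite{Janich1968} already used above); under this identification the map $M\mathcal{S}\to Cy$ sends $[W]$ to $(W,\varnothing)$ and the boundary map $Cy\to\Sigma^{|y|+1}M\mathcal{S}$ sends $(V,\partial V\cong Y\times P)$ to $[P]$. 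One verifies this by checking that relative bordism fits into the expected long exact sequence with these maps and comparing with the cofiber sequence of $Cy$ via Pontryagin--Thom and the five lemma.

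Next I would identify the two lifts in~\eqref{diag_Toda}. Since $xy=0$, the chosen $\mathcal{S}$-nullbordism $U_{xy}$ exhibits $X\times Y$ as $\mathcal{S}$-nullbordant, so the pair $(U_{xy},\ \partial U_{xy}\cong Y\times X)$ defines a class in $\pi_{|x|+|y|+1}(Cy)$; its image under $Cy\to\Sigma^{|y|+1}M\mathcal{S}$ is $[X]=x$, which is exactly the compatibility required of $u_{xy}$ over the map $x\cdot$, so this class (shifted by $\Sigma^{|z|}$) is a legitimate choice of $u_{xy}$. Similarly, the lift $u_{yz}\colon\Sigma^{|z|}Cy\to M\mathcal{S}'$ of $z\cdot$ determined by the chosen $\mathcal{S}'$-nullbordism $U_{yz}$ of $Y\times Z$ is, in the bordism model, the operation
\begin{align}
    (V,\ \partial V\cong Y\times P)\ \longmapsto\ \bigl[\,V\times Z\ \cup_{\,Y\times P\times Z}\ \overline{P\times U_{yz}}\,\bigr]\in\pi_{*}M\mathcal{S}'.
\end{align}
This restricts to $z\cdot$ on the image of $M\mathcal{S}$ (take $P=\varnothing$, giving $[W\times Z]=z\cdot[W]$) and, by construction, annihilates the relevant composite, so it is a valid $u_{yz}$; that it is well-defined on relative bordism classes is routine. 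Evaluating this operation on $(U_{xy},\ \partial U_{xy}\cong Y\times X)$ produces precisely
\begin{align}
    W=U_{xy}\times Z\ \cup_{\,X\times Y\times Z}\ \overline{X\times U_{yz}},
\end{align}
the manifold of Definition~\ref{def_Toda_mfd}, so $u_{yz}\circ u_{xy}=[W]$.

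Finally, both sides of the asserted equality are defined only modulo $\pi_{|x|+|y|+1}M\mathcal{S}\cdot z+x\cdot\pi_{|y|+|z|+1}M\mathcal{S}'$, and this indeterminacy matches exactly the ambiguity in the choices of $U_{xy}$ and $U_{yz}$ recorded in Definition~\ref{def_Toda_mfd} and Definition~\ref{def_Toda_htpy}, so the equality holds in the stated quotient. I expect the genuine subtlety to be the orientation-and-structure bookkeeping: one must check that the Pontryagin--Thom dictionary sends the difference-of-two-nullhomotopies computing $u_{yz}\circ u_{xy}$ to the glued manifold $W$ with precisely the reversal $\overline{X\times U_{yz}}$ of Definition~\ref{def_Toda_mfd}, and that the $\mathcal{S}$- and $\mathcal{S}'$-structures glue compatibly across $X\times Y\times Z$. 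Setting up the relative-bordism model of $Cy$ with correctly identified structure maps is where the real work lies; once that is in place, the rest is a direct unwinding of diagram~\eqref{diag_Toda}.
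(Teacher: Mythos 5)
Your proposal is correct and follows essentially the same route as the paper, whose proof simply invokes the Pontryagin--Thom construction for manifolds with boundary and corners (citing \cite{LauresANSS}); you have unwound exactly what that invocation means, identifying $\pi_*(Cy)$ with bordism of $\mathcal{S}$-manifolds whose boundary is a product with $Y$, matching the lifts $u_{xy}$, $u_{yz}$ with the nullbordisms $U_{xy}$, $U_{yz}$, and checking that the indeterminacies agree. The extra detail you supply is a faithful elaboration of the paper's one-line argument rather than a different method.
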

\begin{proof}
    This directly follows from the Pontryagin-Thom construction. For details of Pontryagin-Thom construction in the presence of boundaries, see \cite{LauresANSS}. 
\end{proof}



\begin{thebibliography}{DFHH14}

\bibitem[AHR10]{AHR10}
M.~Ando, M.~R. Hopkins, and C.~Rezk, \emph{Multiplicative orientations of
  {KO}-theory and the spectrum of topological modular forms}.
  \url{https://faculty.math.illinois.edu/~mando/papers/koandtmf.pdf}.

\bibitem[BK21]{belmont2021toda}
E.~Belmont and H.~J. Kong, \emph{A toda bracket convergence theorem for
  multiplicative spectral sequences}, 2021.
  \href{http://arxiv.org/abs/2112.08689}{{\arxivfont arXiv:2112.08689
  [math.AT]}}.

\bibitem[Dev19]{devalapurkar2020andohopkinsrezk}
S.~K. Devalapurkar, \emph{The ando-hopkins-rezk orientation is surjective},
  2020. \href{http://arxiv.org/abs/1911.10534}{{\arxivfont arXiv:1911.10534
  [math.AT]}}.

\bibitem[DFHH14]{TMFBook}
C.~L. Douglas, J.~Francis, A.~G. Henriques, and M.~A. Hill (eds.),
  \doihref{http://dx.doi.org/10.1090/surv/201}{\emph{Topological modular
  forms}}, Mathematical Surveys and Monographs, vol. 201, AMS, 2014.

\bibitem[FH16]{FreedHopkins2021}
D.~S. Freed and M.~J. Hopkins, \emph{Reflection positivity and invertible
  topological phases}, \href{https://doi.org/10.2140/gt.2021.25.1165}{Geom.
  Topol. \textbf{25} (2021) 1165--1330},
  \href{http://arxiv.org/abs/1604.06527}{{\arxivfont arXiv:1604.06527
  [hep-th]}}.

\bibitem[HL13]{HillLawson}
M.~Hill and T.~Lawson, \emph{Topological modular forms with level structure},
  \href{https://doi.org/10.1007/s00222-015-0589-5}{Invent. Math. \textbf{203}
  (2016) 359--416}, \href{http://arxiv.org/abs/1312.7394}{{\arxivfont
  arXiv:1312.7394 [math.AT]}}.

\bibitem[Hop02]{Hopkins2002}
M.~J. Hopkins, \emph{Algebraic topology and modular forms}, Proceedings of
  {ICM} 2002 {Beijing}, {V}ol. {I}, Higher Ed. Press, Beijing, 2002,
  pp.~291--317. \href{http://arxiv.org/abs/math.AT/0212397}{{\arxivfont
  arXiv:math.AT/0212397}}.

\bibitem[HST10]{HohnholdStolzTeichner}
H.~Hohnhold, S.~Stolz, and P.~Teichner, \emph{From minimal geodesics to
  supersymmetric field theories}, A celebration of the mathematical legacy of
  {R}aoul {B}ott, CRM Proc. Lecture Notes, vol.~50, Amer. Math. Soc.,
  Providence, RI, 2010, pp.~207--274.

\bibitem[J{\"a}n68]{Janich1968}
K.~J{\"a}nich, \emph{On the classification of {$O(n)$}-manifolds},
  \href{https://doi.org/10.1007/BF02052956}{Math. Ann. \textbf{176} (1968)
  53--76}.

\bibitem[Lau00]{LauresANSS}
G.~Laures, \emph{On cobordism of manifolds with corners},
  \href{https://doi.org/10.1090/S0002-9947-00-02676-3}{Trans. Amer. Math. Soc.
  \textbf{352} (2000) 5667--5688}.

\bibitem[Rav86]{Ranavel86}
D.~C. Ravenel, \emph{Complex cobordism and stable homotopy groups of spheres},
  Pure and Applied Mathematics, vol. 121, Academic Press, Inc., Orlando, FL,
  1986.

\bibitem[ST04]{StolzTeichner1}
S.~Stolz and P.~Teichner,
  \doihref{http://dx.doi.org/10.1017/CBO9780511526398.013}{\emph{What is an
  elliptic object?}}, Topology, geometry and quantum field theory, London Math.
  Soc. Lecture Note Ser., vol. 308, Cambridge Univ. Press, 2004, pp.~247--343.

\bibitem[ST11]{StolzTeichner2}
\bysame,
  \doihref{http://dx.doi.org/10.1090/pspum/083/2742432}{\emph{Supersymmetric
  field theories and generalized cohomology}}, Mathematical foundations of
  quantum field theory and perturbative string theory, Proc. Sympos. Pure
  Math., vol.~83, AMS, 2011, pp.~279--340.
  \href{http://arxiv.org/abs/1108.0189}{{\arxivfont arXiv:1108.0189
  [math.AT]}}.

\bibitem[Tod62]{Toda}
H.~Toda, \emph{Composition methods in homotopy groups of spheres}, Annals of
  Mathematics Studies, vol. No. 49, Princeton University Press, Princeton, NJ,
  1962.

\bibitem[TY23a]{Tachikawayamashita2021}
Y.~Tachikawa and M.~Yamashita, \emph{Topological modular forms and the absence
  of all heterotic global anomalies},
  \href{https://doi.org/10.1007/s00220-023-04761-2}{Comm. Math. Phys.
  \textbf{402} (2023) 1585--1620}.

\bibitem[TY23b]{tachikawa2023anderson}
Y.~Tachikawa and M.~Yamashita, \emph{Anderson self-duality of topological
  modular forms, its differential-geometric manifestations, and vertex operator
  algebras}, 2023. \href{http://arxiv.org/abs/2305.06196}{{\arxivfont
  arXiv:2305.06196 [math.AT]}}.

\end{thebibliography}

\providecommand{\bysame}{\leavevmode\hbox to3em{\hrulefill}\thinspace}
\providecommand{\MR}{\relax\ifhmode\unskip\space\fi MR }
\providecommand{\MRhref}[2]{%
  \href{http://www.ams.org/mathscinet-getitem?mr=#1}{#2}
}
\providecommand{\href}[2]{#2}
\providecommand{\doihref}[2]{\href{#1}{#2}}
\providecommand{\arxivfont}{\tt}

\end{document}